\numberwithin{equation}{section}
\DeclareFontFamily{OT1}{pzc}{}
\DeclareFontShape{OT1}{pzc}{m}{it}{<-> s * [0.900] pzcmi7t}{}
\newlength{\dhatheight}
\definecolor{astral}{RGB}{46,116,181}
\definecolor{darkslategray}{rgb}{0.18, 0.31, 0.31}
\definecolor{warmblack}{rgb}{0.0, 0.26, 0.26}
\definecolor{darkslategray}{rgb}{0.18, 0.31, 0.31}
\definecolor{warmblack}{rgb}{0.0, 0.26, 0.26}
\def\bState{\State\hskip-\aLG@thistlm}
\newtheorem{theorem}{Theorem}[section]
\newtheorem{lemma}[theorem]{Lemma}
\newtheorem{corollary}[theorem]{Corollary}
\theoremstyle{definition}
\newtheorem{definition}{Definition}[section]
\newtheorem{remark}{Remark}[section]
\newtheorem{example}{Example}[section]
\journal{.....}
\begin{document}

\begin{frontmatter}

\title{On  generalized-Drazin inverses and  GD-star matrices}

\vspace{-.4cm}

\author{Amit Kumar$^{\mu, a}$, Vaibhav Shekhar$^{\dagger, b}$ and Debasisha Mishra$^{*,a}$}

 \address{
                          $^a$Department of Mathematics,\\
                        National Institute of Technology Raipur\\ 
                        Chhattishgrah, India.\\
                        $^b$Department of Mathematics,\\ Indian Institute of Technology Delhi,
                       India.
                        \\email$^\mu$: amitdhull513@gmail.com  \\
                        email$^{\dagger}$: vaibhav@maths.iitd.ac.in\\ 
                        email$^*$: kapamath\symbol{'100}gmail.com. \\}
\vspace{-2cm}

\begin{abstract} 
Motivated by the works of Wang and Liu [Linear Algebra Appl.,
488 (2016) 235-248; MR3419784] and Mosi\'c [Results Math., 75(2) (2020) 1-21; MR4079761], we provide further results on GD inverses and
 introduce two new classes for square matrices called  GD-star (generalized-Drazin-star) and GD-star-one  (generalized-Drazin-star-one) using a GD inverse of a matrix.  We then exploit their various properties and characterize them in terms of various generalized inverses. We establish a representation of a GD-star matrix by using the core-nilpotent decomposition and Hartwig-Spindelb$\ddot{\text{o}}$ck
decomposition. We also define a binary relation called GD-star order using this class of matrices. Further, we obtain some analogous results for the class of star-GD  matrices. Moreover, the reverse-order law and forward-order law for GD inverse along with its monotonicity criteria are obtained.





 
 \end{abstract}

\begin{keyword}
Generalized inverse; GD inverse; generalized-Drazin-star matrix; Drazin-star matrix; Partial order.\\
{\bf Mathematics Subject Classification:} 15A09,  15A24, 15A21. 

\end{keyword}

\end{frontmatter}

\newpage
\section{Introduction and motivation}
The literature for generalized inverses is quite large due to their huge  applicability in several fields. These inverses are applied to solve problems that appear in numerical analysis, statistics, neural computing, chemical equations, coding theory, robotics, etc. Interested readers are referred to \cite{baks}, \cite{ben11}, \cite{CD},    {\cite{app2}}, {\cite{app4}}, {\cite{app1}}, \cite{Mitra}, \cite{Mosic4}, and {\cite{app3}} for several applications of generalized inverses of matrices. 


Throughout this article, we denote $\mathbb{C}^{m\times n}$ to represent the set of all $m\times n$ complex matrices. For a given $A\in\mathbb{C}^{m\times n}$, the notions  $A^*$, $R(A)$, $N(A)$, and $P_A$ denote the conjugate transpose of $A$, the range space of $A$, the null space of $A$, and the orthogonal projection onto the range space of $A$, respectively. For every $A\in \mathbb{C}^{m\times n}$, the unique matrix $X\in \mathbb{C}^{n\times m}$ that satisfies the following four matrix equations: 
$$(1) AXA=A, ~(2) XAX=X, ~(3) (AX)^*=AX,~ \textnormal{ and ~} (4) (XA)^*=XA$$
is called the {\it Moore-Penrose inverse} \cite{Penrose} of $A$. It is denoted as $A^{\dagger}$. The Moore-Penrose inverse is applicable in finding the least-squares solution of minimum norm of an inconsistent linear system \cite{ben11}. The set of all matrices $X\in\mathbb{C}^{n\times m}$ which satisfies any of the combinations of the above four matrix equations is denoted as $A\{i,j,k,l\}$, where $i,j,k,l \in \{1,2,3,4\}$. For example, $A\{1\}$ denotes the set of all solutions $X$ of matrix equation (1). Such an $X$ which satisfies equation (1) is called first inverse or inner inverse of $A$, and is denoted by $A^{-}$ or $A^{(1)}$. Similarly, $A\{1,3\}$ denotes the set of all solutions of the first and third matrix equations. We denote a member of $A\{1,3\}$ as $A^{(1,3)}$. For $A\in \mathbb{C}^{m\times m}$, the smallest nonnegative integer for which $rank(A^k)=rank(A^{k+1})$ is called the {\it index} of the matrix $A$, and we denote it by $ind(A)$. Let $A\in\mathbb{C}^{m\times m}$, the unique matrix $X\in\mathbb{C}^{m\times m}$ satisfying the  equations $XAX=X$, $XA=AX$ and $A^{k+1}X=A^k$ is called the {\it Drazin inverse} \cite{ben11} of the matrix $A$. It is denoted as $A^D$. Here, $k$ denotes the index of the matrix $A$. The Drazin inverse is used to find solutions of singular differential equations \cite{ben11}. If $ind(A)=1$, then the unique $X$ satisfying the same matrix equations is called the {\it group inverse}.  More precisely, for $A\in\mathbb{C}^{m\times m}$ of index $k=1$, there exists a unique matrix $X$, called the {\it group inverse} of $A$ that satisfies   $XAX=X$, $XA=AX$ and $A^2X=A$. The group inverse of a matrix $A$ is denoted as $A^\#$. 
The group inverse helps to solve a statistic problem involving Markov chains \cite{ben11} (for example stationary probabilities, etc.).
In 1978, Campbell and Meyer \cite{sl1} provided some modifications to the classic Drazin inverse
by introducing weak Drazin inverse. For an instance the author showed that if one has a block triangular 
matrix it is easier to compute a weak Drazin than the Drazin inverse. The next result is an application of a weak Drazin inverse.
\begin{theorem}(\cite{sl1})
    If $T$ is the transition matrix of an $m$-state ergodic chain 
and if $A = I - T$, then the rows of $I - A^{WD}A$ are all equal to the unique fixed 
probability vector $w^*$ of $T$ for any generalized Drazin inverse of $A$, where $A^{WD}$ denotes a weak Drazin inverse of $A$.
\end{theorem}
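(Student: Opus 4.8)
The plan is to reduce everything to two structural facts about the pair $(A,w^*)$ and then use only the defining relation of the weak Drazin inverse. First I would record that, since $T$ is row-stochastic, the all-ones column vector $\mathbf{1}$ satisfies $T\mathbf{1}=\mathbf{1}$, hence $A\mathbf{1}=(I-T)\mathbf{1}=0$; and since $w^*$ is the fixed probability vector, $w^*T=w^*$ with $w^*\mathbf{1}=1$, hence $w^*A=w^*(I-T)=0$. Thus $\mathbf{1}$ spans a right null direction of $A$ and $w^*$ a left null direction. Second, I would invoke ergodicity: by Perron--Frobenius the eigenvalue $1$ of $T$ is simple, so $0$ is a simple eigenvalue of $A$; consequently $ind(A)=1$, the null space $N(A)$ is one-dimensional (spanned by $\mathbf{1}$), and the left null space $\{y^*:y^*A=0\}$ is one-dimensional and spanned by $w^*$.

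The core of the argument uses the weak Drazin defining relation. With $k=ind(A)=1$, a weak Drazin inverse $X=A^{WD}$ satisfies $XA^{k+1}=A^k$, i.e. $XA^2=A$. Setting $B=I-XA$, I would compute $BA=A-XA^2=A-A=0$. Hence every row of $B$ lies in the left null space of $A$; since that space is one-dimensional and spanned by $w^*$, there is a column vector $c\in\mathbb{C}^m$ with $B=c\,w^*$, i.e. the $i$-th row of $B$ equals $c_i\,w^*$.

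It remains to pin down $c$. I would multiply on the right by $\mathbf{1}$: on one hand $B\mathbf{1}=(I-XA)\mathbf{1}=\mathbf{1}-X(A\mathbf{1})=\mathbf{1}$ using $A\mathbf{1}=0$; on the other hand $B\mathbf{1}=c\,(w^*\mathbf{1})=c$ using the normalization $w^*\mathbf{1}=1$. Therefore $c=\mathbf{1}$ and $B=\mathbf{1}\,w^*$, which is precisely the matrix whose every row equals $w^*$. The only property of $X$ used is $XA^2=A$, which every weak Drazin inverse shares (and which the Drazin inverse also satisfies, since it commutes with $A$); this is exactly why the conclusion holds for \emph{any} such inverse, matching the ``for any'' in the statement.

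The main obstacle, and the only place real care is needed, is the structural input rather than the algebra: one must justify from ergodicity that $ind(A)=1$ and that the left null space of $A$ is \emph{exactly} one-dimensional, so that $BA=0$ forces every row of $B$ to be a scalar multiple of the \emph{same} vector $w^*$. Once that one-dimensionality is in hand, the identification $B=c\,w^*$ and the normalization step are immediate. I would also double-check the orientation of the weak Drazin relation, since the row statement about $I-A^{WD}A$ specifically requires the left-sided form $XA^{k+1}=A^k$ (yielding $BA=0$) rather than $A^{k+1}X=A^k$.
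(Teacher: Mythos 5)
Your proof is correct. Note that the paper does not actually prove this statement --- it is quoted verbatim from Campbell and Meyer's 1978 paper on weak Drazin inverses and cited without proof --- so there is no in-paper argument to compare against; your reconstruction is essentially the standard (and original) one. The two structural inputs you isolate are exactly what is needed: ergodicity makes $1$ a simple eigenvalue of $T$, hence $0$ a simple eigenvalue of $A=I-T$, so $ind(A)=1$ and both the right null space (spanned by $\mathbf{1}$) and the left null space (spanned by $w^*$) are one-dimensional. From the left-sided defining relation $XA^{2}=A$ you correctly get $(I-XA)A=0$, forcing each row of $I-XA$ to be a multiple of $w^*$, and the normalization $(I-XA)\mathbf{1}=\mathbf{1}$, $w^*\mathbf{1}=1$ pins the multiple to $1$. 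Your closing caveat about the orientation of the weak Drazin relation is well placed: the conclusion about the \emph{rows} of $I-A^{WD}A$ genuinely requires $XA^{k+1}=A^{k}$ rather than $A^{k+1}X=A^{k}$ (the latter would instead give a statement about the columns of $I-AX$), and this matches Campbell--Meyer's definition. No gaps.
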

\noindent Thereafter, several other generalized inverses have been introduced, namely, Bott-Duffin inverse \cite{ben11}, the core inverse \cite{baks} and MPCEP inverse \cite{Mosic4}, etc.

We next recall the definition of a particular type of weak Drazin inverse called generalized Drazin (or GD) inverse introduced by Wang and Liu \cite{Wang2} in 2016.
 For $A \in \mathbb{C}^{m\times m}$, a matrix $X \in \mathbb{C}^{m\times m}$ satisfying the following matrix equations  
$$AXA = A,~(6)XA^{k+1} = A^{k}  ~\text{and } (7)A^{k+1}X = A^{k}$$
is called {\it GD inverse} of $A$, where $ind(A)=k$. It is denoted by $X=A^{GD}$. 
Unlike the Drazin inverse, a GD inverse of a matrix need not be unique. The set of all GD inverses of a matrix $A$ is denoted by $A\{GD\}$. In 2020, Hern\'andez {\it et al.} \cite{thome3} introduced a new generalized inverse called {\it GDMP inverse} which is also not unique. The definition of a GDMP inverse is stated next.
Let $A \in\mathbb{ C}^{m\times m}$ and $ind(A)=k$. For each $A^{GD} \in A\{GD\}$, a GDMP inverse of $A$, denoted by $A^{GD\dagger}$, is an $m \times m$ matrix $A^{GD\dagger} = A^{GD}AA^{\dagger}$.  The symbol $A\{GD\dagger\}$ stands for the set of all GDMP inverses of $A$.\\  

For two  invertible matrices $A$ and $B$, the inverse of the product  $(AB)^{-1}=B^{-1}A^{-1}$ and $(AB)^{-1}=A^{-1}B^{-1}$ are known as reverse-order law and forward-order law, respectively. For invertible matrices, the reverse-order law always holds, while the forward-order law does not hold always. Further, these laws do not hold for generalized inverses in general. In the theory of generalized inverses, one of the fundamental topic of interest is to investigate reverse-order laws, forward-order laws, and additive properties, etc. Firstly, in 1966, Greville \cite{Grev} provided a few sufficient conditions so that the reverse-order law holds in case of the Moore-Penrose inverse, i.e., $(AB)^{\dagger}=B^{\dagger}A^{\dagger}$.
In the last few years, several authors also discussed the same problem for different generalized inverses. For instance, Xiong and Zheng \cite{Xiong16},  and Liu and Xiong \cite{Xiong25} presented reverse-order laws and forward-order laws, respectively, for  \{1,2,3\}- and \{1,2,4\}-inverses.
In 2016, Wang {\it et al.} \cite{Wang23} provided a few results of the reverse-order law for the Drazin inverse. Deng \cite{Deng24} studied the reverse-order law for the group inverse on Hilbert space.  In 2018, Castro-Gonz\'alez and Hartwig \cite{Nieves} provided some results on the forward-order law for the Moore-Penrose inverse, i.e., $(AB)^{\dagger}=A^{\dagger}B^{\dagger}.$ Very recently, Kumar {\it et al.} \cite{k11} presented  certain sufficient conditions for the reverse-order law and the forward-order law  for GD inverse and GDMP inverse. The reverse-order law has applications in the Karmarkar algorithm, and also used to analyze  Markov chains (see \cite{CDMeyer, Nieves}). For several applications of the forward-order law of generalized inverses in numerical linear algebra, one is referred to \cite{Dilan}, \cite{Guo} and \cite{Xion}.\\

Generalized inverses play an important role in the study of partial orders and pre-orders of matrices. These orders are crucial in the study of shorted operators which have their origin in the study of impedance matrices of $n$-port electrical networks (see \cite{Mitra} for more details). The definition of partial order  is recalled next. A binary relation $x$ (say) is called {\it partial order} if it is reflexive, anti-symmetric and transitive. 
The partial order was initially proposed in ring theory \cite{rpar}. In matrix setting, if a matrix $A$ is below $B$ under partial order $x$, then it is denoted by $A\leq^{x}B$. If a binary relation is reflexive and transitive, then it called {\it pre-order}. In matrix setting, a pre-order coincides with a partial order when the matrix is nilpotent.
The partial order and pre-order theories are also applied in fuzzy set theory, for example,  characterising the class of continuous t-norms \cite{thes}. For different applications of partial orders in many areas, such as statistics, electrical networks,
etc. one is referred to \cite{app-1}, \cite{app-2}, \cite{Mitra} and \cite{app3}.
 Below we recall some of the significant matrix partial orders and pre-orders that have been introduced in the literature. 
 \begin{enumerate}[(a)]
     \item $A\leq^{-}B$, i.e., $AA^{-}=BA^{-}$ and $A^{-}A=A^{-}B$ is called the minus partial order \cite{m2}.
     \item $A\leq^{\#}B$, i.e., $AA^{\#}=BA^{\#}$ and $A^{\#}A=A^{\#}B$ is called the group partial order \cite{m3}.
     \item $A\leq^{*}B$, i.e., $AA^{*}=BA^{*}$ and $A^{*}A=A^{*}B$ is called the  star partial order \cite{m1}.
     \item $A\leq^{D}B$, i.e., $AA^{D}=BA^{D}$ and $A^{D}A=A^{D}B$ is called the Drazin pre-order \cite{Mitra}.
 \end{enumerate}

 Recently, some new class of matrices have been introduced in the literature to solve certain matrix equations and several partial orders have been defined. In this direction, Mosi\'c  \cite{Mosic110} introduced a new class of matrix called the Drazin-star matrix which is stated next. Let $A\in \mathbb{C}^{m\times m}$ of index $k$, the Drazin-star matrix is denoted as $A^{D,*}$, is an $m\times m$ matrix $A^{D,*}=A^DAA^{*}$. Motivated by this work, we introduce two new classes of a matrix, which is called generalized-Drazin-star (or GD-star) matrix and GD-star-one matrix. We then investigate their properties. We show that the proposed classes of matrices serve as a solution to certain matrix equations and to system of linear equations. Further, sufficient conditions are obtained under which these classes of matrices coincide with some well known generalized inverses and some well known class of matrices. Additionally, we introduce a binary operation based on the proposed class of matrix and obtain some of its characterizations.\\
  
The aim of this article is to propose two new classes of matrices and investigate their properties. The article is organized as follows. Section \ref{vi} is divided into two subsections. First we reconstruct the representation of a $A^{GD}$ of matrix $A$ using the core-nilpotent decomposition and also some results on extension of a GD inverse (i.e., on GDMP inverse).  We then obtain spectral results for GD and GDMP inverses. The second subsection, i.e.,  Subsection \ref{sec5} establishes the reverse and forward-order laws of a GD inverse. Section \ref{sec3} is further divided into four subsections for better presentation.  Subsection \ref{2.1} proposes a new class of matrices, we call it GD-star matrices.  After that,  we use the core nilpotent decomposition and Hartwig-Spindelb$\ddot{\text{o}}$ck decomposition to find different representations of a GD-star matrix. Using the results of Section \ref{vi}, we obtained some results for GD-star matrices. In Section \ref{sec4}, we introduce a binary operation called GD-star order, and discuss its characterizations. Subsection \ref{sec6} provides a few results on dual GD-star, which are analogous to  those established in  Subsection \ref{2.1}. Finally, in Section \ref{sec7}, we introduce another class of matrices known as GD $*_1$ matrices and study its properties. 

\section{Some results on GD and GDMP inverses}\label{vi}
This section is divided into two subsections. The first subsection discusses a representation of a GD inverse.  We then present some spectral results on GD and GDMP inverses. Along with that, we also establish monotonicity condition for GD inverses. In the second subsection, we discuss the reverse and forward-order laws for GD inverses.
\subsection{Further results on GD and GDMP inverses}


Let $A\in \mathbb{C}^{m\times m}$. Then, it is well known that $A$ can be  expressed unitarily similar to an block triangular matrix containing a nonsingular and a nilpotent component in the diagonal block. The nonsingular component is called {\it core} and therefore, this decomposition is called as the {\it core-nilpotent decomposition}. The core-nilpotent decomposition of $A$ is given by
\begin{align}\label{ka5}
    A=P\begin{bmatrix}
{C}&S\\O&N\end{bmatrix}P^*,
\end{align}
where $P$ is a unitary matrix and $S$ is some matrix of suitable size.  The core-nilpotent decomposition is used to define the partial order (for example, minus, group and Drazin order) of the matrices (see \cite{Mitra}). Now, we reconstruct the representation of a $A^{GD}$ of matrix $A$ using \eqref{ka5}. The next result is in this direction.
\begin{theorem}
Let $A\in\mathbb{C}^{m\times m}$ be in the form of \eqref{ka5} with $ind(A)=k$. Then,
\begin{align}\label{ka7}
    A^{GD}=P\begin{bmatrix}
C^{-1}&C^{-(k+1)}(\widehat{T'}-\widehat{T}N^{-})\\O&N^{-}\end{bmatrix}P^*,
\end{align}
where $P$ is a unitary matrix and $\widehat{T'}-\widehat{T}N^{-}=-C^{k}SN^{-}+\displaystyle\sum_{j=0}^{k-1}C^{j}SN^{k-j-1}(I-NN^{-})$.
\end{theorem}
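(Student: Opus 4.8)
The plan is to exploit the unitary invariance of the defining equations. Since $P^{-1}=P^{*}$, conjugating $AXA=A$, $(6)$ and $(7)$ by $P$ turns them into the same three equations for the blocks of $Y:=P^{*}A^{GD}P$ relative to $M:=\begin{bmatrix}C&S\\O&N\end{bmatrix}$; once a suitable $Y$ is found, $A^{GD}=PYP^{*}$ recovers the claimed form. So first I would compute the relevant powers of $M$. An easy induction on the off-diagonal block gives $M^{k}=\begin{bmatrix}C^{k}&T\\O&N^{k}\end{bmatrix}$ with $T=\sum_{j=0}^{k-1}C^{k-1-j}SN^{j}$, and the reindexing $i=k-1-j$ rewrites this as $T=\sum_{j=0}^{k-1}C^{j}SN^{k-j-1}$, which is exactly the sum occurring in the statement. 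Because $\mathrm{ind}(A)=k$ forces $N^{k}=0$, this collapses to $M^{k}=\begin{bmatrix}C^{k}&T\\O&O\end{bmatrix}$, and applying the recursion once more yields $M^{k+1}=\begin{bmatrix}C^{k+1}&CT\\O&O\end{bmatrix}$.

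Next I would write $Y=\begin{bmatrix}Y_{11}&Y_{12}\\Y_{21}&Y_{22}\end{bmatrix}$ and impose the three conditions one at a time. Expanding $YM^{k+1}=M^{k}$ (condition $(6)$) and comparing the left block column, the invertibility of $C^{k+1}$ immediately yields $Y_{11}=C^{-1}$ and $Y_{21}=O$, while the right block column is then automatic. The equation $AXA=A$, i.e. $MYM=M$, next forces $NY_{22}N=N$, so $Y_{22}=N^{-}$ is an arbitrary inner inverse of $N$ (this is precisely the source of the non-uniqueness of the GD inverse), together with the off-diagonal requirement $(CY_{12}+SN^{-})N=O$. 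Finally $M^{k+1}Y=M^{k}$ (condition $(7)$) reads $C^{k+1}Y_{12}+CTN^{-}=T$, which, since $C$ is nonsingular, pins down $Y_{12}=C^{-(k+1)}(T-CTN^{-})$.

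The main obstacle is to confirm that this $Y_{12}$ also satisfies the leftover constraint $(CY_{12}+SN^{-})N=O$ coming from $AXA=A$, so that the three equations are genuinely consistent rather than over-determined, and then to massage $Y_{12}$ into the stated form. Both rest on the single shift identity $TN=CT-C^{k}S$, proved by reindexing the defining sum for $T$ and dropping the $N^{k}=0$ term, combined with $N^{j}N^{-}N=N^{j}$ for $j\ge1$ (a consequence of $NN^{-}N=N$). Substituting $Y_{12}$ and using these reduces the constraint to $TN(I-N^{-}N)=0$, which holds termwise. For the rewriting I would expand
\[
-C^{k}SN^{-}+T(I-NN^{-})=T-(C^{k}S+TN)N^{-}=T-CTN^{-},
\]
where the last equality is again the shift identity. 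Hence
\[
C^{-(k+1)}\Bigl(-C^{k}SN^{-}+\textstyle\sum_{j=0}^{k-1}C^{j}SN^{k-j-1}(I-NN^{-})\Bigr)=C^{-(k+1)}(T-CTN^{-})=Y_{12},
\]
which is exactly the $(1,2)$ block in \eqref{ka7}. Conjugating $Y$ back by $P$ then completes the argument.
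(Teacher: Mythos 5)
The paper states this theorem with no proof at all --- the statement is followed directly by the next result --- so there is no argument of the authors' to compare yours against; your write-up supplies the missing proof. I have checked it and it is correct. The block computation $M^{j+1}=\begin{bmatrix}C^{j+1}&CT_j+SN^{j}\\O&N^{j+1}\end{bmatrix}$ gives $T=T_k=\sum_{j=0}^{k-1}C^{j}SN^{k-1-j}$ and, since $N^{k}=O$, the collapsed forms of $M^{k}$ and $M^{k+1}$ you use. Condition (6) then forces $Y_{11}=C^{-1}$, $Y_{21}=O$; $MYM=M$ forces $Y_{22}\in N\{1\}$ together with $(CY_{12}+SN^{-})N=O$; and condition (7) pins down $Y_{12}=C^{-(k+1)}(T-CTN^{-})$. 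Your shift identity $TN=CT-C^{k}S$ (the $i=k$ term of $TN$ dies because $N^{k}=O$) is what makes everything close up: it reduces the leftover constraint to $C^{-k}TN(I-N^{-}N)=O$, which vanishes termwise since every summand of $TN$ carries a factor $N^{i}$ with $i\ge 1$ and $N^{i}(I-N^{-}N)=N^{i-1}(N-NN^{-}N)=O$; and it converts the stated block $-C^{k}SN^{-}+T(I-NN^{-})=T-(C^{k}S+TN)N^{-}$ into $T-CTN^{-}$, matching your $Y_{12}$. A further merit of your argument is that it proves the characterization in both directions: the three defining equations force exactly this block form with $N^{-}$ ranging over $N\{1\}$ (the source of non-uniqueness), and conversely every such choice satisfies all three equations --- which is the analogue of the Wang--Liu ``if and only if'' representation \eqref{GD2} that the theorem is presumably meant to generalize.
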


In next result, we demonstrate $A^{GD}AA^D=A^D$.
\begin{theorem}\label{dsa2}
Let $A\in \mathbb{C}^{m\times m}$ and $ind(A)=k$. Then, the system 
$$XA=AX,~XAX=X, \text{ and }A^{k+1}X=A^k$$
has a solution of the form $X=A^{GD}AA^{D}$, i.e., $A^{GD}AA^{D}=A^D$. 
\end{theorem}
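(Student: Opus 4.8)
The plan is to prove the identity $A^{GD}AA^D = A^D$ directly, since this immediately settles the theorem: the Drazin inverse $A^D$ satisfies, by its very definition, the three equations $XAX=X$, $XA=AX$ and $A^{k+1}X=A^k$, so once $X=A^{GD}AA^D$ is shown to coincide with $A^D$ it is automatically a solution of the system. Thus the whole content of the statement reduces to establishing that single product identity.

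First I would record the elementary Drazin fact that I intend to exploit. Writing $E = AA^D = A^DA$ for the associated spectral projection, the relation $A^D=A^DAA^D$ together with the commutation $AA^D=A^DA$ shows that $E$ is idempotent and that $AA^DA^D=A^D$. Since $A$ and $A^D$ commute, $A^j(A^D)^j=(AA^D)^j=E^j=E$ for every $j\ge 0$, and therefore
\begin{align*}
A^j(A^D)^{j+1}=A^j(A^D)^jA^D=EA^D=AA^DA^D=A^D .
\end{align*}
In particular, taking $j=k$ gives the factorization $A^D=A^k(A^D)^{k+1}$, which is the form I will plug in.

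Next I would feed this into $A^{GD}AA^D$. Using $A\cdot A^k=A^{k+1}$ and the factorization just obtained,
\begin{align*}
A^{GD}AA^D=A^{GD}A\,A^k(A^D)^{k+1}=A^{GD}A^{k+1}(A^D)^{k+1}.
\end{align*}
The defining relation of a GD inverse, namely equation~(6) read with $X=A^{GD}$, is $A^{GD}A^{k+1}=A^k$; substituting it collapses the leading factor and yields $A^{GD}AA^D=A^k(A^D)^{k+1}=A^D$. This proves $A^{GD}AA^D=A^D$, and the theorem follows.

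There is no genuine obstacle in this argument; the only point that needs care is the preliminary identity $A^k(A^D)^{k+1}=A^D$, which rests on the idempotency of $E=AA^D$ and the commutation of $A$ with $A^D$. As an alternative route, one could instead substitute the explicit core-nilpotent representation \eqref{ka7} of $A^{GD}$ together with the corresponding block form of $A^D$ and verify the block-matrix product directly; this also works but is more computational and less transparent than the algebraic argument above.
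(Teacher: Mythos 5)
Your proof is correct and rests on the same key manipulation as the paper's own argument, namely inserting the factorization $A^D=A^k(A^D)^{k+1}$ and collapsing $A^{GD}A^{k+1}$ to $A^k$ via the defining relation of a GD inverse. The only difference is organizational: the paper verifies all three equations $AX=XA$, $XAX=X$, $A^{k+1}X=A^k$ for $X=A^{GD}AA^D$ and then invokes uniqueness of the Drazin inverse, whereas you establish the identity $A^{GD}AA^D=A^D$ directly and observe that the system is then satisfied automatically; both routes are valid.
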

\begin{proof}
Suppose $X=A^{GD}AA^{D}$. Then, $AX=AA^{GD}AA^D=AA^D$. Further,
\begin{align*}
    XA&=A^{GD}AA^DA\\
    &=A^{GD}A^{k+1}(A^D)^{k+1}A\\
    &=A^k(A^D)^{k+1}A&\\
    &=A^{k+1}(A^D)^{k+1}\\
    &=AA^D.
\end{align*}
So, $AX=XA$. Moreover,
\begin{align*}
    XAX&=A^{GD}AA^DAA^{GD}AA^D\\
    &=A^{GD}AA^DAA^{D}\\
    &=A^{GD}AA^D\\
    &=X,
\end{align*}
and
\begin{align*}
    A^{k+1}X&=A^{k+1}A^{GD}AA^D\\
    &=A^kAA^{D}\\
    &=A^{k+1}A^D\\
    &=A^k.
\end{align*}
Hence, $X=A^{GD}AA^D$ is Drazin inverse of $A$. By the uniqueness of Drazin inverse, we get $X=A^{GD}AA^D=A^D$.   
\end{proof}
Similarly, $A^{D}AA^{GD}$ is also the Drazin inverse. Hern\'andez {\it et al.} \cite{thome3} proved that  a GDMP  inverse  is a solution of the system $XAX=X$, $AX=AA^{\dagger}$ and $XA^{k}=A^{GD}A^k$. Motivated by this result, we will show that a GDMP inverse is $\{1,2,3, 6\}$-inverse of $A$.  
\begin{theorem}\label{dsa2}
Let $A\in \mathbb{C}^{m\times m}$ and $ind(A)=k$. Then, the system 
$$(AX)^*=AX,~AXA=A,~XAX=X, \text{ and }XA^{k+1}=A^k$$
has a solution of the form $X=A^{GD}AA^{\dagger}$ (GDMP). 
\end{theorem}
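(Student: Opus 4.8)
The plan is to set $X=A^{GD}AA^{\dagger}$ and verify the four equations directly, the whole argument hinging on a single preliminary identity. First I would compute $AX=AA^{GD}AA^{\dagger}$ and invoke the GD-defining relation $AA^{GD}A=A$ to collapse this to $AX=AA^{\dagger}$. This one identity, together with the Moore--Penrose axioms, does almost all the work, so I would isolate it at the outset.

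With $AX=AA^{\dagger}$ in hand, three of the required equations fall out immediately. The equation $(AX)^{*}=AX$ holds because $AX=AA^{\dagger}$ and $(AA^{\dagger})^{*}=AA^{\dagger}$ is a Moore--Penrose axiom. The equation $AXA=A$ follows from $AXA=(AX)A=AA^{\dagger}A=A$. For $XAX=X$ I would write $XAX=X(AX)=XAA^{\dagger}=A^{GD}A(A^{\dagger}AA^{\dagger})$ and use $A^{\dagger}AA^{\dagger}=A^{\dagger}$ to return to $A^{GD}AA^{\dagger}=X$.

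The remaining equation $XA^{k+1}=A^{k}$ is the only one that genuinely uses the index and the GD structure, and it is where I would be most careful about the grouping. I would expand $XA^{k+1}=A^{GD}AA^{\dagger}A^{k+1}$ and regroup as $A^{GD}(AA^{\dagger}A)A^{k}$; the inner block $AA^{\dagger}A$ simplifies to $A$, leaving $A^{GD}A\cdot A^{k}=A^{GD}A^{k+1}$, which equals $A^{k}$ by the GD-defining equation $A^{GD}A^{k+1}=A^{k}$.

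I do not anticipate a serious obstacle: no step requires anything beyond the defining relations of $A^{GD}$ and $A^{\dagger}$, and the Hermitian symmetry of $AX$ is inherited verbatim from $AA^{\dagger}$. The one point worth flagging is that this $X$ need not satisfy $(XA)^{*}=XA$, so the conclusion is exactly membership in $A\{1,2,3,6\}$ and not that $X$ is a Moore--Penrose inverse; the asymmetry enters precisely because $A^{GD}A$ carries no Hermitian guarantee. If preferred, the preliminary identity $AX=AA^{\dagger}$ could instead be quoted from the cited result of Hern\'andez {\it et al.}, after which the remaining verifications are unchanged.
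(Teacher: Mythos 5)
Your proposal is correct and follows essentially the same route as the paper: set $X=A^{GD}AA^{\dagger}$, observe $AX=AA^{\dagger}$ via $AA^{GD}A=A$, and verify the four equations directly from the Moore--Penrose axioms and the GD relation $A^{GD}A^{k+1}=A^{k}$. The only cosmetic difference is in the $XAX$ computation, where you simplify through $X(AX)=XAA^{\dagger}$ while the paper cancels $AA^{\dagger}A$ in the middle first; both are equivalent one-line verifications.
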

\begin{proof}
Set $X=A^{GD}AA^{\dagger}$. Now, $AX=AA^{GD}AA^{\dagger}=AA^{\dagger}$. So, $(AX)^*=AX$. Further,
\begin{align*}
    AXA&=AA^{GD}AA^{\dagger}A\\
    &=AA^{\dagger}A\\
    &=A,
\end{align*}
\begin{align*}
    XAX&=A^{GD}AA^{\dagger}AA^{GD}AA^{\dagger}\\
    &=A^{GD}AA^{GD}AA^{\dagger}\\
    &=A^{GD}AA^{\dagger}\\
    &=X,
\end{align*}
and 
\begin{align*}
    XA^{k+1}&=A^{GD}AA^{\dagger}A^{k+1}\\
    &=A^{GD}AA^k\\
    &=A^{GD}A^{k+1}\\
    &=A^k.
\end{align*}
\end{proof}

Next result can be proved in similar steps as the above theorem.
\begin{theorem}\label{dsa2}
Let $A\in \mathbb{C}^{m\times m}$ and $ind(A)=k$. Then, the system 
$$(XA)^*=XA,~AXA=A,~XAX=X, \text{ and }A^{k+1}X=A^k$$
has a solution of the form $X=A^{\dagger}AA^{GD}$ (MPGD). 
\end{theorem}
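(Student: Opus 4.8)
The plan is to verify the four defining equations directly for $X = A^{\dagger}AA^{GD}$, mirroring the computation in the preceding (GDMP) theorem but with the roles of left and right multiplication interchanged. In that proof the whole argument was driven by the single identity $AX = AA^{\dagger}$; here the analogue is to establish first that $XA = A^{\dagger}A$. Indeed, $XA = A^{\dagger}AA^{GD}A$, and since $AA^{GD}A = A$ by the first defining equation of the GD inverse, the middle factor collapses and leaves $XA = A^{\dagger}A$. This single identity will do most of the work.

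With $XA = A^{\dagger}A$ in hand, the symmetrized Hermitian condition is immediate: because $A^{\dagger}$ satisfies the fourth Moore--Penrose equation, $A^{\dagger}A$ is Hermitian, so $(XA)^{*} = (A^{\dagger}A)^{*} = A^{\dagger}A = XA$. It is worth flagging the switch of roles relative to the GDMP theorem, where the Hermitian requirement fell on $AX = AA^{\dagger}$ and used Moore--Penrose equation (3); here it falls on $XA = A^{\dagger}A$ and uses equation (4) instead.

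The remaining three identities then follow quickly from $XA = A^{\dagger}A$ together with $AA^{\dagger}A = A$ and the GD relation $A^{k+1}A^{GD} = A^{k}$. Concretely, I would compute $AXA = (AA^{\dagger}A)A^{GD}A = AA^{GD}A = A$; next $XAX = (XA)X = A^{\dagger}A\cdot A^{\dagger}AA^{GD} = A^{\dagger}(AA^{\dagger}A)A^{GD} = A^{\dagger}AA^{GD} = X$; and finally $A^{k+1}X = A^{k+1}A^{\dagger}AA^{GD} = A^{k}(AA^{\dagger}A)A^{GD} = A^{k+1}A^{GD} = A^{k}$. The argument is entirely routine, and there is no genuine obstacle; the only point requiring care is the bookkeeping of which one-sided Moore--Penrose identity and which GD equation is invoked at each collapse, which is exactly why the statement is asserted to follow by steps parallel to the previous theorem.
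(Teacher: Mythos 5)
Your proof is correct and is exactly the argument the paper intends: the paper omits the proof, saying only that it follows "in similar steps" to the GDMP theorem, and your verification via the key identity $XA = A^{\dagger}A$ (using $AA^{GD}A=A$, the Moore--Penrose equation (4), $AA^{\dagger}A=A$, and $A^{k+1}A^{GD}=A^{k}$) is precisely that mirrored computation. No issues.
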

\begin{remark}
Since a GDMP inverse is also an $\{1,3\}$-inverse, so $A^{GD,\dagger}b$ give a least-squares solution of system $Ax=b$. Similarly, the minimum norm solution of $Ax=b$ is given by $x=A^{\dagger,GD}b$, where $A^{\dagger,GD}\in A\{\dagger,GD\}$.
\end{remark}
A matrix $A\in \mathbb{C}^{m\times m}$ is called {\it EP (or range-Hermitian}) if $R(A)=R(A^*)$. The class of EP matrices contain some special class of matrices including `Hermitian', `normal' and `nonsingular' matrices, to name few. An EP matrix appear  in the study of contraction \cite{Har2},  differential equation \cite{camb2}  and  difference equation \cite{camb1}, etc. The following is an important characterization of an EP matrix. A matrix $A$ is EP if and only if it commutes with its Moore-Penrose inverse, i.e., $AA^{\dagger}=A^{\dagger}A$.  Further, the Moore-Penrose, the group inverse and the Drazin inverse coincide for this class of matrices (see \cite{ben11}).
Suppose that $A=\begin{bmatrix}
    1&0\\0&0
\end{bmatrix}$ with index $k=1$. Since $A$ is EP, so, $A^{\dagger}=A^{\#}$ but there exist $A^{GD}=\begin{bmatrix}
    1&0\\0&a
\end{bmatrix}$ such that $A^{GD}\neq A^{\#}$ for $a\neq 0$.
Next, we establish the monotonicity  for GD inverse, but first we recall the following result given by Coll {\it et al.} \cite{thome1}.
\begin{corollary}(Corollary 2.1, \cite{thome1})\label{co2.1}
     Let $A, X \in\mathbb{ C}^{m\times m}$ and $ind(A)=k$. Then, the following conditions are equivalent:
     \begin{itemize}
         \item[(i)] $X \in A\{GD\}$.
\item[(ii)] $AXA = A$ and $A^kX = XA^k$.
\item[(iii)] $AXA = A$, $XA^{k+1} = A^k$, and $A^{k+1}X = A^k$.
   \end{itemize}
\end{corollary}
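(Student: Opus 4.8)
The plan begins with a bookkeeping observation: condition (iii) is word-for-word the defining system of a GD inverse (the relations $AXA=A$, $XA^{k+1}=A^k$, and $A^{k+1}X=A^k$), so the equivalence (i) $\Leftrightarrow$ (iii) holds by definition and needs no argument. Moreover $AXA=A$ is common to (ii) and (iii); I would therefore strip it off and reduce the whole corollary to the claim that, given $AXA=A$, the single commutation $A^kX=XA^k$ is equivalent to the pair $\{XA^{k+1}=A^k,\ A^{k+1}X=A^k\}$. I would also dispose of the degenerate case $k=0$ first: then $A$ is invertible, $X=A^{-1}$, and all three conditions hold trivially, so I may assume $k\ge 1$ throughout (which makes $A^{k-1}$ meaningful).

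For the implication (iii) $\Rightarrow$ (ii) I expect a purely algebraic one-line substitution to suffice. Using the hypothesis $XA^{k+1}=A^k$ to rewrite the leading factor as $A^k=XA^{k+1}$, and then $A^{k+1}X=A^k$ to collapse the resulting tail, I get
\[
 A^kX=(XA^{k+1})X=X(A^{k+1}X)=XA^k,
\]
which is exactly the commutation relation of (ii); combined with the common $AXA=A$, this gives (ii).

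For (ii) $\Rightarrow$ (iii) I would instead factor a power of $A$ out and feed in $AXA=A$. Writing $A^{k+1}X=A(A^kX)$ and replacing $A^kX$ by $XA^k$ via the commutation hypothesis,
\[
 A^{k+1}X=A(A^kX)=A(XA^k)=(AXA)A^{k-1}=A\,A^{k-1}=A^k,
\]
and symmetrically
\[
 XA^{k+1}=(XA^k)A=(A^kX)A=A^{k-1}(AXA)=A^{k-1}A=A^k,
\]
which are precisely the relations $A^{k+1}X=A^k$ and $XA^{k+1}=A^k$; together with $AXA=A$ this is (iii).

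The step I expect to be the crux is the substitution in (iii) $\Rightarrow$ (ii): the trick is to recognize that the first hypothesis lets one replace $A^k$ by $XA^{k+1}$ so as to expose the factor $A^{k+1}X$ that the second hypothesis controls. Once this is seen, everything is a few lines and uses no uniqueness of the GD inverse---important, since a GD inverse is not unique, so the argument must hold for an arbitrary $X$ satisfying the relevant relations. As a fallback, a structural proof is available: since (i)--(iii) are invariant under the similarity $A\mapsto PAP^{-1}$, $X\mapsto PXP^{-1}$, one may bring $A$ to block-diagonal core-nilpotent form $\mathrm{diag}(C,N)$ with $C$ nonsingular and $N^k=0$, and check that each of (ii) and (iii) forces $X=\mathrm{diag}(C^{-1},X_4)$ with $NX_4N=N$, whence both $A^kX$ and $XA^k$ equal $\mathrm{diag}(C^{k-1},0)$; but the direct substitution above is shorter and avoids the decomposition machinery.
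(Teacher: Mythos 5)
Your argument is correct and complete. The paper itself offers no proof of this statement: it is quoted verbatim as Corollary~2.1 of Coll, Lattanzi and Thome \cite{thome1}, so there is nothing in the text to compare your reasoning against. That said, your proof is exactly the kind of elementary verification one would expect behind the cited result: (i)$\Leftrightarrow$(iii) is the definition of a GD inverse, the chain $A^kX=(XA^{k+1})X=X(A^{k+1}X)=XA^k$ gives (iii)$\Rightarrow$(ii), and the factorizations $A^{k+1}X=A(XA^k)=(AXA)A^{k-1}=A^k$ and $XA^{k+1}=A^{k-1}(AXA)=A^k$ give (ii)$\Rightarrow$(iii) for $k\ge 1$, with the invertible case $k=0$ handled separately (where $AXA=A$ already forces $X=A^{-1}$, so the commutation condition in (ii) is vacuous but the conclusion still holds). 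Your care in isolating $k\ge1$ so that $A^{k-1}$ is defined, and in noting that no uniqueness of the GD inverse is used, are both appropriate; the fallback via the core--nilpotent form is unnecessary but would also work.
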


Mitra {\it et al.} \cite{Mitra} restated the core-nilpotent decomposition of matrix $A$ as follows:
\begin{align}\label{GD1}
    A=P\begin{bmatrix}
{C}&O\\O&N\end{bmatrix}P^{-1},
\end{align}
 where $P$ is nonsingular matrix. Wang and Liu \cite{Wang2} provided the representation of GD inverse of a matrix $A$ given by \eqref{GD1} which is stated next. Let $A\in\mathbb{C}^{m\times m}$. Then, $A^{GD}$ is a GD inverse of $A$ if and only if
\begin{align}\label{GD2}
    A^{GD}=P\begin{bmatrix}
{C}^{-1}&O\\O&N^{-}\end{bmatrix}P^{-1}.
\end{align}

Next, we recall a well known result by Stewart \cite{stewart}, which establishes a bound of norm of the matrix $(I_m+A)^{-1}$.
 \begin{theorem}(\cite{stewart})\label{ste}
 Let $A\in\mathbb{C}^{m\times m}$ with  $||A||\leq 1$. Then, $I_m+A$ is nonsingular and 
 $$||(I_m+A)^{-1}||\leq(I_m-||A||)^{-1}.$$
 \end{theorem}
 With the help of above theorem, we now give a perturbation result.
 \begin{theorem}
 Let $A\in\mathbb{C}^{m\times m}$ and $B=A+E\in\mathbb{C}^{m\times m}$ with $ind(A)=k$. If the perturbation $E$ satisfies $A^kE=E$ and $||A^{GD}E||<1$, then 
 $$(I_m+A^{GD}E)^{-1}A^{GD}\in B\{1\}.$$
 \end{theorem}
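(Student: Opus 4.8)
The plan is to verify directly that $X := (I_m + A^{GD}E)^{-1}A^{GD}$ is an inner inverse of $B$, i.e.\ that $BXB = B$. First I would note that $X$ is well defined: since $||A^{GD}E|| < 1$, Theorem~\ref{ste} guarantees that $I_m + A^{GD}E$ is nonsingular. To streamline the computation I would set $M := I_m + A^{GD}E$, so that $X = M^{-1}A^{GD}$, and reduce the whole problem to showing $B = AM$ together with the idempotency-type relation $AA^{GD}A = A$.

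The crux is to use the hypothesis $A^kE = E$ to absorb the projector $AA^{GD}$ onto $E$. From the defining equation $AA^{GD}A = A$ of a GD inverse one gets, for $k\ge 1$,
$$AA^{GD}A^k = (AA^{GD}A)A^{k-1} = A\cdot A^{k-1} = A^k.$$
Combining this with $A^kE = E$ yields the key identity
$$AA^{GD}E = AA^{GD}(A^kE) = (AA^{GD}A^k)E = A^kE = E.$$
This is the only place where the structure of the GD inverse and the perturbation constraint $A^kE=E$ are genuinely used, and I expect it to be the heart of the argument; everything afterwards is formal factoring. (The border case $k=0$, in which $A$ is nonsingular and $A^{GD}=A^{-1}$, is immediate, since then $A^0=I_m$ gives both $A^kE=E$ trivially and $AA^{GD}=I_m$, so $AA^{GD}E=E$ still holds.)

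With $AA^{GD}E = E$ in hand, I would factor $B$ through $M$ as
$$B = A + E = A + AA^{GD}E = A(I_m + A^{GD}E) = AM.$$
Consequently $BX = AM\,M^{-1}A^{GD} = AA^{GD}$, and then, using $AA^{GD}A = A$ once more,
$$BXB = AA^{GD}\cdot AM = (AA^{GD}A)M = AM = B.$$
Hence $X = (I_m + A^{GD}E)^{-1}A^{GD} \in B\{1\}$, as claimed. The only subtlety to watch is that $M^{-1}$ and $A^{GD}$ need not commute, so I would keep the factors in the order $M^{-1}A^{GD}$ throughout and cancel $M\,M^{-1}$ only after the factorization $B=AM$ has been established; no further properties of $A^{GD}$ beyond $AA^{GD}A=A$ and $A^{k+1}A^{GD}=A^k$ are required.
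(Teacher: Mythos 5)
Your proof is correct, and it takes a genuinely different route from the paper. The paper works entirely inside the core--nilpotent decomposition \eqref{GD1}--\eqref{GD2}: it writes $E=P\begin{bmatrix}E_1&E_2\\E_3&E_4\end{bmatrix}P^{-1}$, uses $A^kE=E$ to force $E_3=E_4=O$, inverts $I_m+A^{GD}E$ explicitly in block form, and then verifies $BXB=B$ by multiplying out the $2\times 2$ blocks. You instead isolate the coordinate-free identity $AA^{GD}E=AA^{GD}A^kE=A^kE=E$, which gives the factorization $B=A(I_m+A^{GD}E)=AM$ and reduces the verification to two lines; this is shorter, handles the $k=0$ case transparently, and in fact uses only the $\{1\}$-inverse property $AA^{GD}A=A$ (your closing remark that $A^{k+1}A^{GD}=A^k$ is also needed is a harmless overstatement --- it never enters). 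Your argument therefore proves slightly more: the conclusion holds with $A^{GD}$ replaced by any inner inverse $G\in A\{1\}$ satisfying $\|GE\|<1$, and the hypothesis $A^kE=E$ could even be relaxed to $R(E)\subseteq R(A)$. What the paper's computational route buys in exchange is the explicit block formula for $(I_m+A^{GD}E)^{-1}A^{GD}$, on which its subsequent remark (replacing $N^{-}$ by $N^{\dagger}$) depends; your proof does not produce that representation.
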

\begin{proof}
 Assume that $E=P\begin{bmatrix}E_1&E_2\\E_3&E_4
 \end{bmatrix}P^{-1}$ and $rank(A)=r$. From \eqref{GD1} and \eqref{GD2}, we have  $A=P\begin{bmatrix}C&O\\O&N
 \end{bmatrix}P^{-1}$ and $A^{GD}=P\begin{bmatrix}C^{-1}&O\\O&N^{-}
 \end{bmatrix}P^{-1}$, respectively. Now, $A^{k}=P\begin{bmatrix}C^k&O\\O&N^k\end{bmatrix}P^{-1}=P\begin{bmatrix}C^k&O\\O&O\end{bmatrix}P^{-1}$. Then,
 \begin{align*}
     A^kE=P\begin{bmatrix}C^k&O\\O&O\end{bmatrix}P^{-1}P\begin{bmatrix}E_1&E_2\\E_3&E_4
 \end{bmatrix}P^{-1}=P\begin{bmatrix}
     C^kE_1&C^kE_2\\O&O\end{bmatrix}P^{-1}.
 \end{align*}
 If $A^kE=E$, then we have $E_3=O$ and  $E_4=O$. Further, 
 \begin{align*}
     I_m+A^{GD}E&=I_m+P\begin{bmatrix}C^{-1}&O\\O&N^{-}\end{bmatrix}P^{-1}P\begin{bmatrix}E_1&E_2\\O&O\end{bmatrix}P^{-1}
     \end{align*}
     \begin{align*}
     &=I_m+P\begin{bmatrix}C^{-1}E_1&C^{-1}E_2\\O&O\end{bmatrix}P^{-1}\\
     &=P\begin{bmatrix}I_r+C^{-1}E_1&C^{-1}E_2\\O&I_{m-r}\end{bmatrix}P^{-1}\\
     &=P\begin{bmatrix}C^{-1}(C+E_1)&C^{-1}E_2\\O&I_{m-r}\end{bmatrix}P^{-1}. 
 \end{align*}
 From Theorem \ref{ste}, $||A^{GD}E||\leq1$ implies that $I_m+A^{GD}E$ is invertible. So,\\   $(I_m+A^{GD}E)^{-1}=P\begin{bmatrix}(C+E_1)^{-1}C&-(C+E_1)^{-1}E_2\\O&I_{m-r}\end{bmatrix}P^{-1}.$ Furthermore, $$(I_m+A^{GD}E)^{-1}A^{GD}=P\begin{bmatrix}(C+E_1)^{-1}&-(C+E_1)^{-1}E_2N^{-}\\O&N^{-}\end{bmatrix}P^{-1},$$  and $B=A+E=P\begin{bmatrix}C+E_1&E_2\\O&N\end{bmatrix}P^{-1}$. Now, we verify $(I_m+A^{GD}E)^{-1}A^{GD}\in B\{1\}$.  Clearly, 
 \begin{align*}
B(I_m+A^{GD}E)^{-1}A^{GD}B&=P\begin{bmatrix}C+E_1&E_2\\O&N\end{bmatrix}P^{-1}P\begin{bmatrix}(C+E_1)^{-1}&-(C+E_1)^{-1}E_2N^{-}\\O&N^{-}\end{bmatrix}\\
     &~~~~~~~~~~~~~~~~~~~~~~~~~~~~~~~~~~~~~~~~~P^{-1}P\begin{bmatrix}C+E_1&E_2\\O&N\end{bmatrix}P^{-1}\\
     &=P\begin{bmatrix}C+E_1&E_2\\O&N\end{bmatrix}\begin{bmatrix}I_r&(C+E_1)^{-1}E_2(I_{m-r}-N^{-}N)\\O&N^{-}N\end{bmatrix}P^{-1}\\
     &=P\begin{bmatrix}C+E_1&E_2(I_{m-r}-N^{-}N)+E_2N^{-}N\\O&NN^{-}N\end{bmatrix}P^{-1}\\
     &=P\begin{bmatrix}C+E_1&E_2\\O&N\end{bmatrix}P^{-1}=B.
 \end{align*}
 Hence, $(I_m+A^{GD}E)^{-1}A^{GD}\in B\{1\}$.
 \end{proof}
 \begin{remark}
 If in \eqref{GD2}, we replace $N^{-}$ by $N^{\dagger}$, then $B^{-} $ becomes $B^{\dagger}.$
 \end{remark}
\begin{theorem}\label{cd1}(\cite{Barata})
 Let $A\in \mathbb{C}^{m\times m}$ be a non-zero matrix and let $AA^*=\displaystyle\sum_{i=1}^r\alpha_iE_i$ be the spectral representation of $AA^*$, where $\{\alpha_1,\alpha_2,...,\alpha_r\}$ is the set of
distinct eigenvalues of $AA^*$ and $E_i$ are the corresponding self-adjoint spectral projections. Then, we have
$$A^{\dagger}=\displaystyle\sum_{i=1,~ \alpha_i\neq0}^r\dfrac{1}{\alpha_i}A^{*}E_i.$$
\end{theorem}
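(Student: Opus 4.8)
The plan is to verify directly that
\[
X:=\sum_{i:\,\alpha_i\neq 0}\alpha_i^{-1}A^{*}E_i
\]
satisfies the four Penrose equations and then invoke uniqueness of the Moore-Penrose inverse. First I would record the structural facts on which everything rests: $AA^{*}$ is Hermitian positive semidefinite, so every $\alpha_i$ is real and the nonzero ones are strictly positive (hence each $\alpha_i^{-1}$ is real); the spectral projections satisfy $E_i^{*}=E_i$, $E_iE_j=\delta_{ij}E_i$ and $\sum_{i=1}^{r}E_i=I_m$; and $AA^{*}E_i=\alpha_iE_i$ for each $i$.

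The computation of $AX$ is the heart of the argument. Using $AA^{*}E_i=\alpha_iE_i$ I obtain
\[
AX=\sum_{i:\,\alpha_i\neq0}\alpha_i^{-1}AA^{*}E_i=\sum_{i:\,\alpha_i\neq0}E_i.
\]
I would then identify this sum with $P_A$, the orthogonal projector onto $R(A)$: because $N(AA^{*})=N(A^{*})$ and $R(AA^{*})=R(A)$, the eigenprojection attached to the eigenvalue $0$ (if present) is the orthogonal projector onto $N(A^{*})$, and subtracting it from $\sum_{i=1}^{r}E_i=I_m$ leaves $\sum_{i:\,\alpha_i\neq0}E_i=P_A$. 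This at once yields equations (1) and (3), since $AXA=P_AA=A$ and $(AX)^{*}=P_A^{*}=P_A=AX$.

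For equation (2) I would write $XAX=X(AX)=XP_A=\sum_{i:\,\alpha_i\neq0}\alpha_i^{-1}A^{*}E_iP_A$ and use $E_iP_A=E_i$ for every $i$ with $\alpha_i\neq0$ to recover $X$. For equation (4) I would observe that each summand of $XA=\sum_{i:\,\alpha_i\neq0}\alpha_i^{-1}A^{*}E_iA$ is Hermitian, because $(A^{*}E_iA)^{*}=A^{*}E_i^{*}A=A^{*}E_iA$ and $\alpha_i^{-1}\in\mathbb{R}$, so $XA$ is Hermitian as a real linear combination of Hermitian matrices. Having verified all four equations, uniqueness of the Moore-Penrose inverse forces $X=A^{\dagger}$.

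The argument is essentially routine; the only place needing genuine care is the identification $\sum_{i:\,\alpha_i\neq0}E_i=P_A$, which rests on the range and kernel identities $R(AA^{*})=R(A)$ and $N(AA^{*})=N(A^{*})$, together with the reality of the eigenvalues so that the coefficients behave well under the adjoint in equation (4). As an alternative route, one can bypass the Penrose verification entirely by combining the standard identity $A^{\dagger}=A^{*}(AA^{*})^{\dagger}$ with the spectral formula $(AA^{*})^{\dagger}=\sum_{i:\,\alpha_i\neq0}\alpha_i^{-1}E_i$ for the pseudoinverse of a Hermitian positive semidefinite matrix, which delivers the claimed expression immediately.
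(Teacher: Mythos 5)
Your verification is correct: all four Penrose equations are checked soundly, the identification $\sum_{i:\,\alpha_i\neq 0}E_i=P_{R(A)}$ is justified properly via $N(AA^{*})=N(A^{*})$ and $R(AA^{*})=R(A)$ for the Hermitian positive semidefinite matrix $AA^{*}$, and the reality of the $\alpha_i$ is correctly invoked where it matters (equation (4)). Note, however, that the paper offers no proof of this statement at all: it is recalled verbatim from the cited reference of Barata and Hussein, so there is nothing internal to compare your argument against. Your direct Penrose-equation check is a complete, self-contained proof, and your closing alternative --- combining $A^{\dagger}=A^{*}(AA^{*})^{\dagger}$ with the spectral formula $(AA^{*})^{\dagger}=\sum_{i:\,\alpha_i\neq 0}\alpha_i^{-1}E_i$ --- is essentially the route taken in the cited source; either suffices.
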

With the help of above theorem, we provide a spectrum result for GDMP inverse.
\begin{theorem}\label{spe}
 Let $A\in \mathbb{C}^{m\times m}$ be a non-zero matrix and let $AA^*=\displaystyle\sum_{i=1}^r\alpha_iE_i$ be the spectral representation of $AA^*$, where $\{\alpha_1,\alpha_2,...,\alpha_r\}$ is the set of
distinct eigenvalues of $AA^*$ and $E_i$ are the corresponding self-adjoint spectral projections. Then, we have
$$A^{GD,\dagger}=\displaystyle\sum_{i=1}^rA^{GD}E_i=A^{GD}.$$
\end{theorem}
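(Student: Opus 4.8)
The plan is to unwind the definition $A^{GD,\dagger}=A^{GD}AA^{\dagger}$ and to feed into it the spectral formula for $A^{\dagger}$ provided by Theorem \ref{cd1}, so that the entire expression collapses into a sum of terms of the form $A^{GD}E_i$. First I would substitute $A^{\dagger}=\displaystyle\sum_{i=1,\,\alpha_i\neq0}^r\frac{1}{\alpha_i}A^{*}E_i$ to obtain
$$A^{GD,\dagger}=A^{GD}AA^{\dagger}=A^{GD}A\left(\displaystyle\sum_{i=1,\,\alpha_i\neq0}^r\frac{1}{\alpha_i}A^{*}E_i\right)=A^{GD}\displaystyle\sum_{i=1,\,\alpha_i\neq0}^r\frac{1}{\alpha_i}(AA^{*})E_i.$$
The key algebraic simplification is then to insert the spectral representation $AA^{*}=\displaystyle\sum_{j=1}^r\alpha_jE_j$ into each summand and use the orthogonality and idempotency of the self-adjoint spectral projections, $E_jE_i=\delta_{ij}E_i$. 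This reduces $(AA^{*})E_i=\alpha_iE_i$, hence $\frac{1}{\alpha_i}(AA^{*})E_i=E_i$, giving
$$A^{GD,\dagger}=A^{GD}\displaystyle\sum_{i=1,\,\alpha_i\neq0}^rE_i=\displaystyle\sum_{i=1,\,\alpha_i\neq0}^rA^{GD}E_i.$$
In other words this step identifies $AA^{\dagger}$ with the orthogonal projector $P_A=\displaystyle\sum_{i=1,\,\alpha_i\neq0}^rE_i$ onto $R(A)$, which is the standard fact $AA^{\dagger}=P_A$.

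To reach the asserted value $A^{GD}$ I would then invoke the resolution of the identity $\displaystyle\sum_{i=1}^rE_i=I_m$ furnished by the spectral theorem, which immediately yields the second equality,
$$\displaystyle\sum_{i=1}^rA^{GD}E_i=A^{GD}\left(\displaystyle\sum_{i=1}^rE_i\right)=A^{GD}.$$
Thus the whole statement rests on establishing the first equality $A^{GD,\dagger}=\displaystyle\sum_{i=1}^rA^{GD}E_i$, equivalently $A^{GD}AA^{\dagger}=A^{GD}$.

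The hard part will be precisely reconciling the two sums appearing in the statement. The computation above produces, unavoidably, only the terms with $\alpha_i\neq0$, i.e. $A^{GD}P_A$, whereas the statement sums over all $i$. Bridging this gap requires showing that the zero-eigenvalue projection $E_{i_0}$ (the orthogonal projector onto $N(AA^{*})=N(A^{*})=R(A)^{\perp}$) satisfies $A^{GD}E_{i_0}=0$; that is, that $A^{GD}$ annihilates $R(A)^{\perp}$. This is exactly the claim $A^{GD}AA^{\dagger}=A^{GD}$, and it is the one point where a genuine property of the GD inverse, rather than the pure spectral algebra of $AA^{*}$, must enter. I would attempt to verify it from the representation \eqref{GD2} of $A^{GD}$, and I would stress-test it against the non-unique example $A=\begin{bmatrix}1&0\\0&0\end{bmatrix}$ discussed earlier, where the freedom in choosing $A^{GD}$ is precisely the freedom in how $A^{GD}$ acts on $R(A)^{\perp}$; this is the step most likely to need an extra hypothesis or a careful restriction of the index set, and therefore the crux of the argument.
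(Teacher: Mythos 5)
The paper gives no proof of Theorem \ref{spe} at all (it is stated bare, immediately after Theorem \ref{cd1}), so there is nothing to compare your route against; what matters is whether your argument closes, and you have correctly diagnosed that it cannot. Your computation up to
\[
A^{GD,\dagger}=A^{GD}AA^{\dagger}=A^{GD}\sum_{i=1,\ \alpha_i\neq0}^{r}\frac{1}{\alpha_i}(AA^{*})E_i=\sum_{i=1,\ \alpha_i\neq0}^{r}A^{GD}E_i=A^{GD}P_{A}
\]
is sound, and the second displayed equality $\sum_{i=1}^{r}A^{GD}E_i=A^{GD}$ is indeed just the resolution of the identity. The crux you isolate --- that the two sums agree only if $A^{GD}E_{i_0}=O$ for the projector $E_{i_0}$ onto $N(A^*)=R(A)^{\perp}$, i.e.\ only if $A^{GD}AA^{\dagger}=A^{GD}$ --- is exactly right, and no extra argument will supply it: for a singular matrix a GD inverse is free on $R(A)^{\perp}$ precisely because the defining equations $AXA=A$, $XA^{k+1}=A^k$, $A^{k+1}X=A^k$ do not constrain $X$ there, so the missing identity is false in general.

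Concretely, the theorem as stated is contradicted by the paper's own examples. For $A=\begin{bmatrix}1&0\\0&0\end{bmatrix}$ with $A^{GD}=\begin{bmatrix}1&0\\0&a\end{bmatrix}$, $a\neq0$, one has $A^{GD}AA^{\dagger}=\begin{bmatrix}1&0\\0&0\end{bmatrix}\neq A^{GD}$; and in Example \ref{exam3} the paper itself computes $A^{GD\dagger}=\begin{bmatrix}1&0\\0&0\end{bmatrix}$ while $A^{GD}=\begin{bmatrix}1&a\\0&b\end{bmatrix}$ with $a+b=1$, so $A^{GD\dagger}=A^{GD}$ is impossible for any admissible choice. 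What survives of the statement is only the restricted identity $A^{GD,\dagger}=\sum_{i=1,\ \alpha_i\neq0}^{r}A^{GD}E_i$, which your proof establishes; the unrestricted sum and the final ``$=A^{GD}$'' should be deleted (or the theorem should be restricted to nonsingular $A$, or to GD inverses satisfying $A^{GD}=A^{GD}AA^{\dagger}$, where it becomes tautological). Your stress-testing instinct here is exactly the right one, and it wins: the statement, not your proof, is at fault.
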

    

\subsection{Reverse and Forward-order laws}\label{sec5}


 The reverse-order law and forward-order law are fundamental topics of generalized inverse. Kumar {\it et al.} \cite{k11} discussed the reverse-order law, forward-order law and additive properties of a GD inverse. In this section, we provide some more results for the reverse and forward-order laws of a GD inverse. We begin as a result of the reverse-order law of a GD inverse.
 \begin{theorem}\label{rev}
Let $A,B\in\mathbb{C}^{m \times m}$ be such that $AB=BA$ and $max\{ind(A),ind(B)\}=k$. If $R(A^{GD})\subseteq R(B)$, then $(AB)^{GD}=B^{GD}A^{GD}$.
\end{theorem}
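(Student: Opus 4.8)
The plan is to show that $X:=B^{GD}A^{GD}$ is itself a GD inverse of $AB$, i.e.\ that it satisfies $(AB)X(AB)=AB$, $X(AB)^{\ell+1}=(AB)^{\ell}$ and $(AB)^{\ell+1}X=(AB)^{\ell}$ with $\ell=ind(AB)$ (equivalently, that it meets the hypotheses of Corollary~\ref{co2.1}). The first thing I would do is convert the range hypothesis into a usable algebraic identity. Since $BB^{GD}B=B$, the matrix $BB^{GD}$ is idempotent with range $R(B)$ and acts as the identity on $R(B)$; hence $R(A^{GD})\subseteq R(B)$ forces
\[
BB^{GD}A^{GD}=A^{GD}.
\]
This is the only place the hypothesis enters, and it is precisely what couples the two factors.

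Next I would verify the three equations. For the outer one, using the identity above together with $AA^{GD}A=A$,
\[
(AB)(B^{GD}A^{GD})(AB)=A\,(BB^{GD}A^{GD})\,AB=A\,A^{GD}\,AB=(AA^{GD}A)B=AB,
\]
which, notably, needs neither commutativity nor any power relation. For the two power equations I would use $AB=BA$ (so $(AB)^{j}=A^{j}B^{j}$) together with the GD relations $A^{j+1}A^{GD}=A^{j}=A^{GD}A^{j+1}$ and $B^{j+1}B^{GD}=B^{j}=B^{GD}B^{j+1}$, all valid for every $j\ge k=\max\{ind(A),ind(B)\}$. A direct rearrangement then yields, for all $j\ge k$,
\[
(AB)^{j+1}X=A^{j+1}B^{j+1}B^{GD}A^{GD}=A^{j+1}B^{j}A^{GD}=B^{j}A^{j+1}A^{GD}=B^{j}A^{j}=(AB)^{j},
\]
and the symmetric manipulation gives $X(AB)^{j+1}=(AB)^{j}$.

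The remaining, and I expect the only delicate, point is that a GD inverse is defined relative to $ind(AB)$ rather than relative to $k$, so I cannot simply stop at exponent $k$. Taking $j=k$ above gives $(AB)^{k+1}X=(AB)^{k}$, whence $R((AB)^{k})\subseteq R((AB)^{k+1})$ and therefore $R((AB)^{k})=R((AB)^{k+1})$; this shows $\ell:=ind(AB)\le k$. It then remains to push the power equations down from exponent $k$ to exponent $\ell$, for which I would invoke the core--nilpotent splitting $\mathbb{C}^{m}=R((AB)^{\ell})\oplus N((AB)^{\ell})$, on which $AB$ acts invertibly, respectively nilpotently. Writing $D:=(AB)^{\ell+1}X-(AB)^{\ell}$, one checks $R(D)\subseteq R((AB)^{\ell})$ while $(AB)^{k-\ell}D=(AB)^{k+1}X-(AB)^{k}=O$; since $AB$ is invertible on $R((AB)^{\ell})$ the intersection $R((AB)^{\ell})\cap N((AB)^{k-\ell})$ is trivial, forcing $D=O$, and the mirror computation (noting that the relevant difference annihilates both $R((AB)^{\ell})$ and $N((AB)^{\ell})$) gives $X(AB)^{\ell+1}=(AB)^{\ell}$. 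Combined with the outer equation this shows $X\in(AB)\{GD\}$, i.e.\ $(AB)^{GD}=B^{GD}A^{GD}$. The algebra of the three equations is routine; the real work is the index bookkeeping just described.
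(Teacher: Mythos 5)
Your proof is correct and follows essentially the same route as the paper's: both convert the hypothesis $R(A^{GD})\subseteq R(B)$ into the identity $BB^{GD}A^{GD}=A^{GD}$ via $BB^{GD}B=B$, and then verify the three defining equations of a GD inverse for $X=B^{GD}A^{GD}$ by the same algebraic manipulations using $AB=BA$. The only difference is that you additionally justify that $ind(AB)\le k$ and push the power identities from exponent $k$ down to $ind(AB)$ via the core--nilpotent splitting; the paper silently works at exponent $k$, so your extra index bookkeeping is a (correct) tightening of the same argument rather than a different approach.
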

\begin{proof}
The hypothesise $R(A^{GD})\subseteq R(B)$ implies that $A^{GD}=BY$, for some matrix $Y$. Pre-multiplying by $BB^{GD}$ in $A^{GD}=BY$, we get $BB^{GD}A^{GD}=A^{GD}$.
Set $X=B^{GD}A^{GD}$. Now,
\begin{align}
    ABXAB&=ABB^{GD}A^{GD}AB\nonumber\\
    &=AA^{GD}AB\text{ (since $BB^{GD}A^{GD}=A^{GD}$)}\nonumber\\
    &=AB,
\end{align}
and
\begin{align}
    X(AB)^{k+1}&=B^{GD}A^{GD}A^{k+1}B^{k+1}\text{ (since $AB=BA)$}\nonumber\\
    &=B^{GD}A^kB^{k+1}\nonumber\\
    &=B^{GD}B^{k+1}A^{k}\nonumber\\
    &=B^kA^k\nonumber\\
    &=(AB)^k.
\end{align}
Similarly, $(AB)^{k+1}X=(AB)^k$. Hence, $(AB)^{GD}=B^{GD}A^{GD}$. 
\end{proof}
 Next result can be proved as similar step to the above theorem.
\begin{theorem}\label{for}
Let $A,B\in\mathbb{C}^{m \times m}$ be such that $AB=BA$ and $max\{ind(A),ind(B)\}=k$. If $R(B^{GD})\subseteq R(A)$, then $(AB)^{GD}=A^{GD}B^{GD}$.
\end{theorem}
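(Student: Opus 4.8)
The plan is to mirror the proof of Theorem \ref{rev}, interchanging the roles played by the two factors and taking the candidate inverse to be $X = A^{GD}B^{GD}$. First I would convert the range hypothesis into an algebraic absorption identity. Since $R(B^{GD}) \subseteq R(A)$, every column of $B^{GD}$ lies in $R(A)$, so $B^{GD} = AZ$ for some $Z \in \mathbb{C}^{m \times m}$. Pre-multiplying by $AA^{GD}$ and using $AA^{GD}A = A$ yields the key identity $AA^{GD}B^{GD} = B^{GD}$, which is the exact analogue of the identity $BB^{GD}A^{GD} = A^{GD}$ used in the previous theorem.

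Next I would verify that $X = A^{GD}B^{GD}$ satisfies the three defining equations of a GD inverse of $AB$, namely $(AB)X(AB) = AB$, $X(AB)^{k+1} = (AB)^k$ and $(AB)^{k+1}X = (AB)^k$, which by definition places $X \in (AB)\{GD\}$. For the first equation, using $AB = BA$ I would compute $ABX = BAA^{GD}B^{GD} = B(AA^{GD}B^{GD}) = BB^{GD}$ by the absorption identity, and then $ABXAB = BB^{GD}(AB) = BB^{GD}BA = BA = AB$, where I used $AB = BA$ together with $BB^{GD}B = B$.

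For the two power equations I would exploit that commuting matrices have commuting powers, so that $A^{k+1}, B^{k+1}, A^k, B^k$ all commute with one another and $(AB)^j = A^j B^j$. Then $X(AB)^{k+1} = A^{GD}B^{GD}B^{k+1}A^{k+1} = A^{GD}B^k A^{k+1} = A^{GD}A^{k+1}B^k = A^k B^k = (AB)^k$, using $B^{GD}B^{k+1} = B^k$ and $A^{GD}A^{k+1} = A^k$; symmetrically, $(AB)^{k+1}X = B^{k+1}A^{k+1}A^{GD}B^{GD} = B^{k+1}A^k B^{GD} = A^k B^{k+1}B^{GD} = A^k B^k = (AB)^k$, using the left-hand versions $A^{k+1}A^{GD} = A^k$ and $B^{k+1}B^{GD} = B^k$. (Here I would also note that, since $k = \max\{ind(A), ind(B)\}$, these GD-identities remain valid with the common exponent $k$: multiplying $A^{GD}A^{ind(A)+1} = A^{ind(A)}$ by a suitable power of $A$ gives $A^{GD}A^{k+1} = A^k$, and likewise for $B$, exactly as in Theorem \ref{rev}.)

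The main obstacle, and the only point where the argument is not a verbatim transcription of Theorem \ref{rev}, is that in the power equations the factor $B^{GD}$ sits between $A^{GD}$ and powers of $A$, while neither $A^{GD}$ nor $B^{GD}$ is known to commute with the opposite matrix: a GD inverse need not be a polynomial in its matrix, so $AB = BA$ does \emph{not} force $AB^{GD} = B^{GD}A$. The way around this is precisely the shuttling shown above, in which one never commutes a GD inverse past the other matrix but only commutes the genuine powers $A^{k+1}, B^{k+1}, A^k, B^k$ among themselves, so that each GD inverse ends up adjacent to a power of its \emph{own} matrix and can be collapsed via $A^{GD}A^{k+1} = A^k$ and $B^{GD}B^{k+1} = B^k$ (and their left-hand analogues). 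With the three equations established, $X = A^{GD}B^{GD}$ is a GD inverse of $AB$, giving $(AB)^{GD} = A^{GD}B^{GD}$.
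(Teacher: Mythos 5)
Your proposal is correct and is exactly the symmetric adaptation of the paper's proof of Theorem \ref{rev} that the authors have in mind when they omit this proof as ``similar'': you derive the absorption identity $AA^{GD}B^{GD}=B^{GD}$ from the range inclusion and verify the three defining equations for $X=A^{GD}B^{GD}$, shuttling only genuine powers of $A$ and $B$ past one another. Your explicit remark that $AB=BA$ does not let a GD inverse commute past the other factor is a point the paper glosses over, but your handling of it matches what the paper's own computation implicitly does.
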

Some sufficient conditions for the triple reverse-order law of a GD inverse is listed below.
\begin{theorem}
Let $A,B,C\in\mathbb{C}^{m \times m}$ be such that commute with each others and\\ $max\{ind(A),ind(B), ind(C)\}=k$.  If one of the following holds:
\begin{enumerate}[(i)]
    \item $R(B^{GD}B)\subseteq R(C)$ and $A^{GD}BC=BCA^{GD}$,
    \item $R(A^{GD}A)\subseteq R(B)$ and $C^{GD}AB=ABC^{GD}$,
    \item $C^{GD}AB=ABC^{GD}$ and $A^{GD}AB=BA^{GD}A$,
    \item $A^{GD}BC=BCA^{GD}$ and $CC^{GD}B=BCC^{GD}$,
\end{enumerate}
then $(ABC)^{GD}=C^{GD}B^{GD}A^{GD}$.
\end{theorem}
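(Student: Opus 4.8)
The plan is to set $X=C^{GD}B^{GD}A^{GD}$ and $M=ABC$ and to show that $X$ is a GD inverse of $M$ by verifying the three defining relations $MXM=M$, $XM^{k+1}=M^k$ and $M^{k+1}X=M^k$; equivalently, I would invoke the characterization in Corollary \ref{co2.1}. First I would record the preliminary facts that, since $A,B,C$ pairwise commute and each has index at most $k$, the power $M^{j}$ factors as $A^{j}B^{j}C^{j}$ for every $j$, and $k$ serves as an index for $M$, so that these are the correct relations to check.

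I expect the two outer-power relations to hold without invoking any of (i)--(iv), purely from commutativity of $A,B,C$ together with the defining relations $A^{GD}A^{k+1}=A^k$, $A^{k+1}A^{GD}=A^k$ (and their analogues for $B,C$). For $XM^{k+1}=M^k$ one slides each GD-inverse factor leftward into its own matrix power: $C^{GD}B^{GD}A^{GD}A^{k+1}B^{k+1}C^{k+1}$ collapses via $A^{GD}A^{k+1}=A^k$, then the freshly produced $A^k$ is commuted past $B^{k+1}$, one applies $B^{GD}B^{k+1}=B^k$ and finally $C^{GD}C^{k+1}=C^k$, the pairwise commutativity doing all the reshuffling. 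The relation $M^{k+1}X=M^k$ is the mirror image: after $C^{k+1}C^{GD}=C^k$ one moves the \emph{original} power $C^k$ to the far left past $A^{k+1}$ and $B^{k+1}$, then applies $B^{k+1}B^{GD}=B^k$ and $A^{k+1}A^{GD}=A^k$, again only commuting the original matrices. Thus the entire content of the theorem resides in the middle relation $MXM=M$.

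The hard part is $MXM=ABC\,C^{GD}B^{GD}A^{GD}\,ABC=M$, and this is where the four hypotheses enter. For case (i) I would first relocate the trailing $A^{GD}$: since $A$ commutes with $BC$, the tail $A^{GD}ABC$ equals $A^{GD}(BC)A$, and the commutator hypothesis $A^{GD}BC=BCA^{GD}$ turns this into $BCA^{GD}A$, leaving $MXM=AB\,(CC^{GD}B^{GD}B)\,CA^{GD}A$. Next I would collapse the surplus idempotent: the range hypothesis $R(B^{GD}B)\subseteq R(C)$ gives $B^{GD}B=CW$, and premultiplying by $CC^{GD}$ and using $CC^{GD}C=C$ yields the absorption identity $CC^{GD}B^{GD}B=B^{GD}B$, reducing the middle block to $B^{GD}B$. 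Then $BB^{GD}B=B$ leaves $ABCA^{GD}A$, and bringing the leading $A$ rightward past $BC$ followed by $AA^{GD}A=A$ returns $ABC=M$. Cases (ii)--(iv) follow the same template: a commutator hypothesis ($C^{GD}AB=ABC^{GD}$, $A^{GD}AB=BA^{GD}A$, or $CC^{GD}B=BCC^{GD}$) is used to transport one GD-inverse factor across the complementary product, and then either a range hypothesis (such as $R(A^{GD}A)\subseteq R(B)$) producing an absorption identity, or a second commutator, is used to annihilate the remaining idempotent, after which the inner-inverse identities $AA^{GD}A=A$, $BB^{GD}B=B$, $CC^{GD}C=C$ finish the collapse.

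The main obstacle, and the reason four separate conditions are listed, is that GD inverses are not unique, so $A^{GD},B^{GD},C^{GD}$ need not commute with $B,C,A$ or with one another even though $A,B,C$ do; hence $MXM$ cannot be reduced by naive rearrangement. Each hypothesis must be applied at precisely the right position, either to slide a single GD-inverse factor past the complementary product or to cancel a spurious projector $CC^{GD}$ or $BB^{GD}$, and verifying that the particular pair of hypotheses in each of (i)--(iv) is exactly what the corresponding collapse of $MXM$ requires is the bulk of the work. The only remaining point is the routine check that $k$ is genuinely an index of $ABC$ under the commuting assumption.
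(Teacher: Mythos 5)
Your proposal is correct and follows essentially the same route as the paper: set $X=C^{GD}B^{GD}A^{GD}$, verify the two power identities using only pairwise commutativity and $A^{GD}A^{k+1}=A^k$ (and its mirror), and reduce $ABC\,X\,ABC$ to $ABC$ by first transporting $A^{GD}$ via the commutator hypothesis and then absorbing $CC^{GD}$ through the identity $CC^{GD}B^{GD}B=B^{GD}B$ obtained from $R(B^{GD}B)\subseteq R(C)$. The remaining cases are handled in the paper exactly by the template you describe.
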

\begin{proof} For the proof of this result, set $X=C^{GD}B^{GD}A^{GD}$. We will prove $X$ is a GD inverse of $ABC$  by using the definition of a GD inverse.\\ 
(i)  $R(B^{GD}B)\subseteq R(C)$ implies that  $B^{GD}B=CY$. Pre-multiplying by $CC^{GD}$ in $B^{GD}B=CY$, we get $CC^{GD}B^{GD}B=B^{GD}B$. Now,
\begin{align*}
    ABCXABC&=ABCC^{GD}B^{GD}A^{GD}ABC\\
    &=ABCC^{GD}B^{GD}BCA^{GD}A \textnormal{ (since $A^{GD}BC=BCA^{GD}$)}\\
    &=ABB^{GD}BCA^{GD}A\\
    &=ABCA^{GD}A\\
    &=BCAA^{GD}A\\
    &=BCA\\
    &=ABC,
\end{align*}
\begin{align*}
    X(ABC)^{k+1}&=C^{GD}B^{GD}A^{GD}(ABC)^{k+1}\\
    &=C^{GD}B^{GD}A^{GD}A^{k+1}B^{k+1}C^{k+1}\\
    &=C^{GD}B^{GD}A^kB^{k+1}C^{k+1}\\
    &=C^{GD}B^{GD}B^{k+1}C^{k+1}A^k\\
    &=C^{GD}C^{k+1}B^kA^k\\
    &=C^kB^kA^k\\
    &=(ABC)^k,
\end{align*}
and
  \begin{align}\label{e5.6}
    (ABC)^{k+1}X&=(ABC)^{k+1}A^{GD}B^{GD}C^{GD}\nonumber\\
    &=A^{k+1}B^{k+1}C^{k+1}C^{GD}B^{GD}A^{GD}\nonumber\\
    &=A^{k+1}B^{k+1}C^kB^{GD}A^{GD}\nonumber\\
    &=C^kA^{k+1}B^{k+1}B^{GD}A^{GD}\nonumber\\
    &=C^kB^kA^{k+1}A^{GD}\nonumber\\
    &=C^kB^kA^k\nonumber\\
    &=(ABC)^k.
\end{align}
Hence, $(ABC)^{GD}=C^{GD}B^{GD}A^{GD}$.\\
(ii)  $X(ABC)^{k+1}= (ABC)^{k}$ and $(ABC)^{k+1}X= (ABC)^{k}$ both conditions proofs are similar to part (i). $R(A^{GD}A)\subseteq R(B)$ implies that $A^{GD}A=BZ$. Pre-multiplying by $BB^{GD}$ in $A^{GD}A=BZ$, we get $BB^{GD}A^{GD}A=A^{GD}A$.
\begin{align*}
    ABCXABC&= ABCC^{GD}B^{GD}A^{GD}ABC\\
    &=CC^{GD}ABB^{GD}A^{GD}ABC \text{ (since $C^{GD}AB=ABC^{GD}$) }\\
    &=CC^{GD}AA^{GD}ABC\\
    &=CC^{GD}ABC\\
    &=CC^{GD}CAB\\
    &=CAB\\
    &=ABC.
\end{align*}

(iii)  $X(ABC)^{k+1}= (ABC)^{k}$ and $(ABC)^{k+1}X= (ABC)^{k}$ both conditions proofs are similar to part (i). Further, \begin{align*}
    ABCXABC&= ABCC^{GD}B^{GD}A^{GD}ABC\\
    &=CC^{GD}ABB^{GD}A^{GD}ABC \textnormal{ (since $C^{GD}AB=ABC^{GD}$)}\\
    &=CC^{GD}ABB^{GD}BA^{GD}AC\textnormal{ (since $A^{GD}AB=BA^{GD}A$)}\\
    &=CC^{GD}BAA^{GD}ABC\\
    &=CC^{GD}ABC\\
    &=CC^{GD}CAB\\
    &=CAB\\
    &=ABC.
\end{align*}
(iv) The proofs for  $X(ABC)^{k+1}= (ABC)^{k}$ and $(ABC)^{k+1}X= (ABC)^{k}$ are similar to part (i). Furthermore,  \begin{align*}
    ABCXABC&=ABCC^{GD}B^{GD}A^{GD}ABC\\
    &=ABCC^{GD}B^{GD}BCA^{GD}A \textnormal{ (since $A^{GD}BC=BCA^{GD}$)}\\
      &=ACC^{GD}BB^{GD}BCA^{GD}A \textnormal{ (since $CC^{GD}B=BCC^{GD}$)}\\
    &=ACC^{GD}CBA^{GD}A\\
    &=ABCA^{GD}A\\
    &=BCAA^{GD}A\\
    &=BCA\\
    &=ABC.
\end{align*}
\end{proof}
The proof of the triple forward-order law is similar as the proof of the triple reverse-order law of a GD inverse which is stated below skipping the proof.
\begin{theorem}
Let $A,B,C\in\mathbb{C}^{m \times m}$ be such that commute with each others and\\ $max\{ind(A),ind(B), ind(C)\}$=k. If one of the following holds:
\begin{enumerate}[(i)]
    \item $R(C^{GD}C)\subseteq R(B)$ and $A^{GD}BC=BCA^{GD}$,
    \item $R(B^{GD}B)\subseteq R(A)$ and $C^{GD}AB=ABC^{GD}$,
    \item $C^{GD}AB=ABC^{GD}$ and $A^{GD}AB=BA^{GD}A$,
    \item $A^{GD}BC=BCA^{GD}$ and $CC^{GD}B=BCC^{GD}$,
\end{enumerate}
then $(ABC)^{GD}=A^{GD}B^{GD}C^{GD}$.
\end{theorem}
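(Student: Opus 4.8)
The plan is to set $X = A^{GD}B^{GD}C^{GD}$ and verify, via Corollary \ref{co2.1}(iii) (equivalently, the defining equations of a GD inverse), that $X$ satisfies $(ABC)X(ABC) = ABC$, $X(ABC)^{k+1} = (ABC)^{k}$ and $(ABC)^{k+1}X = (ABC)^{k}$. Since a GD inverse need not be unique, I only need to exhibit that this particular $X$ works, with no uniqueness argument. Throughout I would use repeatedly that $A,B,C$ commute, hence so do all their integer powers, so that $(ABC)^{j} = A^{j}B^{j}C^{j}$ for every $j$.

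The two power relations are case-independent and essentially routine. For $X(ABC)^{k+1}$ I would write $X(ABC)^{k+1} = A^{GD}B^{GD}C^{GD}A^{k+1}B^{k+1}C^{k+1}$ and then bring each GD inverse next to the power of its own matrix, using only that the ordinary powers of $A,B,C$ commute among themselves: reshuffle the powers so that $C^{GD}C^{k+1} = C^{k}$, then $B^{GD}B^{k+1} = B^{k}$, then $A^{GD}A^{k+1} = A^{k}$, arriving at $A^{k}B^{k}C^{k} = (ABC)^{k}$. The relation $(ABC)^{k+1}X = (ABC)^{k}$ is handled symmetrically. The key point is that in these two steps I never move a generalized inverse past a \emph{different} matrix; I only permute honest powers, which is always legitimate, so both relations hold in all four cases at once.

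The substantive part is the outer relation $(ABC)X(ABC) = ABC$, which I would establish case by case, mirroring the four cases of the triple reverse-order law just proved. In each case the range inclusion is first converted into an absorption identity: for instance, in (i) the hypothesis $R(C^{GD}C)\subseteq R(B)$ yields $C^{GD}C = BW$ for some $W$, and premultiplying by $BB^{GD}$ gives $BB^{GD}C^{GD}C = C^{GD}C$ (analogously in (ii)). Starting from $ABCA^{GD}B^{GD}C^{GD}ABC$, I would then use the prescribed commutation hypotheses (such as $A^{GD}BC = BCA^{GD}$ or $C^{GD}AB = ABC^{GD}$) together with the plain commutativity of $A,B,C$ to push $A^{GD},B^{GD},C^{GD}$ alongside matching copies of $A,B,C$, collapse via $AA^{GD}A = A$, $BB^{GD}B = B$, $CC^{GD}C = C$ and the absorption identity, and recover $ABC$.

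A cleaner route for cases (i) and (ii) is to observe that, since $A,B,C$ commute, $ABC = CBA$, so applying the already-proved triple reverse-order law to the reordered triple $(C,B,A)$ gives $(CBA)^{GD} = A^{GD}B^{GD}C^{GD}$; under this reordering its conditions (i) and (ii) become precisely forward conditions (ii) and (i). Cases (iii) and (iv) I would verify by the direct commute-and-collapse computation above. The main obstacle lies entirely in this outer relation: a generalized inverse such as $A^{GD}$ does not a priori commute with $B$ or $C$, so every time an inverse is moved past another matrix I must invoke one of the stated commutation hypotheses rather than the commutativity of $A,B,C$ alone. Tracking which hypothesis legitimizes which move, and aligning the factors so that $AA^{GD}A$, $BB^{GD}B$, $CC^{GD}C$ and the absorption identity can be applied in the correct order, is the delicate bookkeeping step.
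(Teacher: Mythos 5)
The paper itself gives no argument for this theorem --- it merely asserts that the proof is ``similar'' to that of the triple reverse-order law --- so the only question is whether your reconstruction closes. Most of it does: verifying the three defining equations of a GD inverse for $X=A^{GD}B^{GD}C^{GD}$ is the right frame; your treatment of $X(ABC)^{k+1}=(ABC)^{k}$ and $(ABC)^{k+1}X=(ABC)^{k}$ by permuting honest powers of $A,B,C$ is correct and case-independent; and the reversal trick for (i) and (ii) is clean and genuinely verifiable --- applying the reverse-order law to the triple $(C,B,A)$, its conditions (ii) and (i) become exactly the forward conditions (i) and (ii), and $(CBA)^{GD}=A^{GD}B^{GD}C^{GD}$ with $CBA=ABC$.

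The gap is in cases (iii) and (iv), which you defer to ``the direct commute-and-collapse computation'' without performing it; it does not in fact close. Using only commutativity of $A,B,C$ one reduces to
\[
ABC\,A^{GD}B^{GD}C^{GD}\,ABC \;=\; BC\,(AA^{GD})\,B^{GD}\,(C^{GD}C)\,AB,
\]
and the central $B^{GD}$ must now acquire a neighbouring $B$, which requires moving a $B$ rightward past $AA^{GD}$ or leftward past $C^{GD}C$. In the reverse arrangement $C^{GD}B^{GD}A^{GD}$ the analogous flanking factors are $CC^{GD}$ and $A^{GD}A$, and the hypotheses of (iii)--(iv), namely $C^{GD}AB=ABC^{GD}$, $A^{GD}AB=BA^{GD}A$ and $CC^{GD}B=BCC^{GD}$, supply exactly those moves; in the forward arrangement they give commutation involving $A^{GD}A$ and $CC^{GD}$ where you need it for $AA^{GD}$ and $C^{GD}C$, and for a GD inverse these are genuinely different objects (only $A^{k}$, not $A$ itself, is required to commute with $A^{GD}$). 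The reversal trick does not rescue these cases either: reverse-(iii) and reverse-(iv) applied to $(C,B,A)$ produce the conditions $A^{GD}BC=BCA^{GD}$ with $C^{GD}CB=BC^{GD}C$, and $C^{GD}AB=ABC^{GD}$ with $AA^{GD}B=BAA^{GD}$, which are not the stated (iii) and (iv). So as written your argument establishes only cases (i) and (ii); for (iii) and (iv) you must either find a genuinely different manipulation or conclude that the correct hypotheses for the forward law are the mirrored ones above and that the theorem's (iii)--(iv) have been carried over unchanged from the reverse-order law.
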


\section{GD-star matrices and GD-star order}\label{sec3}

\noindent In this section, we discuss some of the main results of this article. In particular, we first propose a new class of matrices and then investigate its various properties. We also introduce a binary relation based on this class of matrix. In addition to these, we also present a new class of matrices, we  call it GD-star-one.\par

\subsection{GD-star matrices}\label{2.1}
 We start this section by defining a GD-star matrix.
\begin{definition}\label{sa1}
Let $A \in \mathbb{C}^{m\times m}$ and $ind(A)=k$. For each  $A^{GD} \in A\{GD\}$, a GD-star matrix of $A$, denoted by $A^{GD, *}$, is an $m \times m$ matrix
$$A^{GD, *} = A^{GD}AA^*.$$
\end{definition}

The above definition is motivated by the definition of  Drazin-star matrix and GDMP generalized inverse, and it may seem that the proposed class of matrices contained in the class of these matrices. To disprove this, we present two examples. The first example shows that GD-star matrix of a matrix is not unique. Also, this class is different from the well known  Drazin-star matrices. Similarly, the second example demonstrates that a GDMP inverse is different from a GD-star matrix. 
\begin{example}
For $A=\begin{bmatrix}
0&1\\0&0\end{bmatrix}$ with index $k=2$. We have $A^{GD}=\begin{bmatrix}
a&b\\1&c\end{bmatrix}$, where $a,b,c\in\mathbb{C}$. Further, $A^D=O$, $A^{GD,*}=\begin{bmatrix}a&0\\1&0\end{bmatrix}$, and $A^{D,*}=O$. So, $A^{GD}\neq A^{D}$ and $A^{GD,*}\neq A^{D,*}.$
\end{example}
\begin{example}\label{exam3}
Let $A=\begin{bmatrix}
1&1\\0&0\end{bmatrix}$ with index $k=1$. Then, $A^{\dagger}=\begin{bmatrix}0.5&0\\0.5&0\end{bmatrix}$ and $A^{GD}=\begin{bmatrix}
1&a\\0&b\end{bmatrix}\in A\{GD\}$, where $a+b=1$. Now,    $A^{GD\dagger}=A^{GD}AA^{\dagger}=\begin{bmatrix}1&0\\0&0\end{bmatrix}$ and $A^{GD,*}=A^{GD}AA^*=\begin{bmatrix}2&0\\0&0\end{bmatrix}$, i.e., $A^{GD\dagger}\neq A^{GD,*}$. It is clear that the class of GD-star matrix is different from the class of GDMP inverse.
\end{example}
\begin{remark}
Since
$A^{GD}AA^*=A^{GD}A(AA^{\dagger}A)^*
    =A^{GD}AA^{\dagger}AA^*
    =A^{GD,\dagger}AA^*,$ the Definition \ref{sa1} remains unaffected if we replace a GD inverse by a GDMP inverse of $A$, i.e., if we replace $A^{GD}$ by $A^{GD,\dagger}$.
Similarly, one may observe that the definition of Drazin-star matrix of $A$, i.e., $A^{D,*}$  remains unchanged if we replace $A^D$ by $A^{D,\dagger}$.
\end{remark}
Now, we prove that a GD-star matrix is a solution of the matrix equations $X(A^{\dagger})^*X=X,~A^kX=A^kA^*, \text{ and } X(A^{\dagger})^*=A^{GD}A$. 
\begin{theorem}\label{sa2}
Let $A\in \mathbb{C}^{m\times m}$. Then, for $k\geq 1$, the system 
$$X(A^{\dagger})^*X=X,~A^kX=A^kA^*, \text{ and } X(A^{\dagger})^*=A^{GD}A$$
has a solution of the form $X\in A\{GD,*\}$, where $A\{GD,*\}$ is the set of all GD-star matrices of $A$. 
\end{theorem}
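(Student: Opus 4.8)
The plan is to set $X = A^{GD}AA^*$ and verify the three equations one at a time, leaning on the defining relations of the GD inverse together with the four Penrose equations for $A^\dagger$. The single most useful preliminary identity is $A^*(A^\dagger)^* = (A^\dagger A)^* = A^\dagger A$, obtained by taking conjugate transposes in the Hermitian relation $(A^\dagger A)^* = A^\dagger A$. I would record this first, since it lets me collapse the factor $(A^\dagger)^*$ wherever it appears and is the engine behind two of the three verifications.

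With that identity in hand I would treat the third equation $X(A^\dagger)^* = A^{GD}A$ first, both because it is the quickest and because the first equation reduces to it. Indeed $X(A^\dagger)^* = A^{GD}AA^*(A^\dagger)^* = A^{GD}A(A^\dagger A) = A^{GD}(AA^\dagger A) = A^{GD}A$, the last step being the Penrose relation $AA^\dagger A = A$. The first equation $X(A^\dagger)^*X = X$ is then immediate by substitution: $X(A^\dagger)^*X = (A^{GD}A)X = A^{GD}A\,A^{GD}AA^* = A^{GD}(AA^{GD}A)A^* = A^{GD}AA^* = X$, where I use only $AA^{GD}A = A$, the inner-inverse property built into the definition of a GD inverse.

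For the second equation $A^kX = A^kA^*$ the key fact is the commutativity $A^kA^{GD} = A^{GD}A^k$, which is exactly Corollary~\ref{co2.1}(ii) (a GD inverse is characterized by $AXA=A$ together with $A^kX=XA^k$). Using it, $A^kX = A^kA^{GD}AA^* = A^{GD}A^kAA^* = A^{GD}A^{k+1}A^* = A^kA^*$, the final equality being the defining GD relation $A^{GD}A^{k+1}=A^k$ (equation~(6)). This closes the argument, and since the computation uses an arbitrary $A^{GD}\in A\{GD\}$, every member of $A\{GD,*\}$ is a solution.

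I do not anticipate a genuine obstacle: once $A^*(A^\dagger)^* = A^\dagger A$ and $A^kA^{GD} = A^{GD}A^k$ are available, each verification is a short regrouping of factors. The one place demanding care is the second equation, where one must resist cancelling $A^{GD}A$ directly, because a GD inverse need not be an outer inverse ($A^{GD}AA^{GD}=A^{GD}$ is unavailable), and must instead route the computation through the commutativity relation and equation~(6). A fully mechanical alternative would substitute the core-nilpotent forms $A = P\begin{bmatrix}C&O\\O&N\end{bmatrix}P^{-1}$ and $A^{GD} = P\begin{bmatrix}C^{-1}&O\\O&N^{-}\end{bmatrix}P^{-1}$ from \eqref{GD1}--\eqref{GD2} and compute blockwise, but the identity-based route above is cleaner and avoids committing to a decomposition.
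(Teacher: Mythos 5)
Your proposal is correct and follows essentially the same route as the paper: a direct verification of the three equations for $X=A^{GD}AA^*$, driven by the identity $A^*(A^{\dagger})^*=(A^{\dagger}A)^*=A^{\dagger}A$ and the relation $AA^{GD}A=A$. The only cosmetic difference is in the middle equation, where the paper simply peels off one factor of $A$ and writes $A^kA^{GD}AA^*=A^{k-1}(AA^{GD}A)A^*=A^kA^*$, whereas you route through the commutativity $A^kA^{GD}=A^{GD}A^k$ and $A^{GD}A^{k+1}=A^k$; both are valid.
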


\begin{proof}
We will show that $X\in A\{GD,*\}$ satisfies the given matrix equations. First we see that
\begin{align}\label{ka1}
    X(A^{\dagger})^*X&=A^{GD}AA^*(A^{\dagger})^*A^{GD}AA^*\nonumber\\
    &=A^{GD}A(A^{\dagger}A)^*A^{GD}AA^*\nonumber\\
    &=A^{GD}AA^{\dagger}AA^{GD}AA^*\nonumber\\
    &=A^{GD}AA^{GD}AA^*\nonumber\\
    &=A^{GD}AA^{*}\nonumber\\
    &=X.
\end{align}


\noindent When $k\geq 1$, 
\begin{align}\label{ka2}
    A^kX&=A^kA^{GD}AA^*\nonumber\\
    &=A^{k-1}AA^{GD}AA^*\nonumber\\
    &=A^{k-1}AA^*\nonumber\\
    &=A^kA^*,
\end{align}
and
\begin{align}\label{ka3}
    X(A^{\dagger})^*&=A^{GD}AA^*(A^{\dagger})^*\nonumber\\
    &=A^{GD}A(A^{\dagger}A)^*\nonumber\\
    &=A^{GD}AA^{\dagger}A\nonumber\\
    &=A^{GD}A.
\end{align}
Thus, from \eqref{ka1}, \eqref{ka2} and \eqref{ka3} it is clear that $X\in A\{GD,*\}$ satisfies the given matrix equations.
\end{proof}
Next we state a few properties that a GD-star matrix of a matrix $A$ possesses.
\begin{lemma}\label{sa3}
Let $A \in \mathbb{C}^{m\times m}$, and $ind(A)=k$. If $A^{GD}\in A\{GD\}$, then a GD-star matrix $X$ of the matrix $A$ satisfies the following properties:
\begin{enumerate}[(i)]
   
     \item $AX(A^{\dagger})^*=A$.
    \item $A^kX=A^{k+1}A^{GD}A^*=A^{GD}A^{k+1}A^*$.
    \item $A^{\dagger}AX=A^*$.
    \item $A^kX(A^{\dagger})^*=A^k$.
    \item $X(A^{\dagger})^*A^{GD}=A^{GD}AA^{GD}$.  
    \item $X(A^{\dagger})^*A^k=A^k$.
    \item $A^{\dagger}AX^2=A^*X$.
    \item $A^{\dagger}AX^2AA^{\dagger}=A^*X$.
    \item $XAX=A^{GD}(AA^*)^2$.
    \item $XAA^{\dagger}X=X^2$.
    \item $(AX)^*=AX$.
    \item $(A^{\dagger})^*X(A^{\dagger})^*=(A^{\dagger})^*.$
        \item $(A^{\dagger})^*X=AA^{\dagger}$.
    \item $(X(A^{\dagger})^*)^2=X(A^{\dagger})^*$.
     \item $X\in (A^{\dagger})^*\{2,3\}$.
\end{enumerate}
\end{lemma}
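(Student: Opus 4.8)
The plan is to substitute $X=A^{GD}AA^*$ everywhere and reduce each of the fifteen identities to three small toolkits: the Moore--Penrose axioms ($AA^\dagger A=A$, $A^\dagger AA^\dagger=A^\dagger$, and the Hermitian-ness of $AA^\dagger$ and $A^\dagger A$); the GD axioms ($AA^{GD}A=A$, $A^{GD}A^{k+1}=A^k$, $A^{k+1}A^{GD}=A^k$); and the three relations already established in Theorem~\ref{sa2}, namely the $\{2\}$-type equation \eqref{ka1} ($X(A^\dagger)^*X=X$), equation \eqref{ka2} ($A^kX=A^kA^*$), and equation \eqref{ka3} ($X(A^\dagger)^*=A^{GD}A$). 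Before touching the list I would isolate two cheap consequences that recur constantly: first $AX=AA^{GD}AA^*=AA^*$, which is Hermitian and hence already settles (xi); and second, from $A=AA^\dagger A$ one gets $A^*AA^\dagger=A^*$, whence $XAA^\dagger=A^{GD}A(A^*AA^\dagger)=X$. This last relation gives (x) at once and will also power (viii).

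Most of the list then drops out by the single cancellation $AA^{GD}A=A$ together with the adjoint shuffles $A^*(A^\dagger)^*=A^\dagger A$ and $(A^\dagger)^*A^*=AA^\dagger$. For instance (iii) is $A^\dagger AX=A^\dagger(AA^{GD}A)A^*=A^\dagger AA^*=A^*$, and (i) is the analogous collapse of $AX(A^\dagger)^*$. Items (ii) and (iv) reduce to \eqref{ka2} combined with $A^{k+1}A^{GD}=A^k$, $A^{GD}A^{k+1}=A^k$ and $A^kA^\dagger A=A^k$. Items (v), (vi) and (xiv) all ride on \eqref{ka3} and the idempotency $(A^{GD}A)^2=A^{GD}A$ (itself immediate from $AA^{GD}A=A$): e.g. (xiv) reads $(X(A^\dagger)^*)^2=(A^{GD}A)^2=A^{GD}A=X(A^\dagger)^*$. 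Property (ix) is one insertion of the core relation inside $XAX$, (vii) is (iii) right-multiplied by $X$, and (viii) is (vii) right-multiplied by $AA^\dagger$ using $XAA^\dagger=X$.

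The genuinely awkward entries are the two where $(A^\dagger)^*$ multiplies $X$ on the \emph{left}, (xii) and (xiii), since the $A^{GD}$ buried in the middle of $A^{GD}AA^*$ refuses to cancel against $(A^\dagger)^*$. The trick I would use is to route through (iii): left-multiplying $A^\dagger AX=A^*$ by $(A^\dagger)^*$ gives $(A^\dagger)^*A^\dagger AX=(A^\dagger)^*A^*=AA^\dagger$, and then the Moore--Penrose identity $(A^\dagger)^*A^\dagger A=(A^\dagger)^*$ (verified by taking adjoints, its adjoint being exactly $A^\dagger AA^\dagger=A^\dagger$) collapses the left side to $(A^\dagger)^*X$, yielding (xiii) $(A^\dagger)^*X=AA^\dagger$. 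Then (xii) follows from (xiii) and $AA^\dagger(A^\dagger)^*=(A^\dagger)^*$, and (xv) is immediate: the $\{2\}$-condition is precisely \eqref{ka1}, while the $\{3\}$-condition $((A^\dagger)^*X)^*=(A^\dagger)^*X$ is just the Hermitian-ness of $AA^\dagger$ supplied by (xiii).

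I expect the only real obstacle to be these left-sided $(A^\dagger)^*$ identities: the naive direct manipulation stalls on the central $A^{GD}$, and the working idea is to strip that factor first via the core relation in the form (iii) and only afterwards reintroduce $(A^\dagger)^*$ through the auxiliary Moore--Penrose facts $(A^\dagger)^*A^\dagger A=(A^\dagger)^*$ and $AA^\dagger(A^\dagger)^*=(A^\dagger)^*$. Everything else is bookkeeping over which cancellation rule to apply, so I would present the fifteen verifications in a dependency-respecting order (e.g. (iii) before (vii) before (viii), and (xiii) before (xii) and (xv)) so that later items may simply cite earlier ones.
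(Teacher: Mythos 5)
Your proposal is correct and takes essentially the same approach as the paper: direct substitution of $X=A^{GD}AA^*$ into each identity and reduction via the Moore--Penrose and GD axioms together with the three relations of Theorem~\ref{sa2}. The only differences are cosmetic (e.g., you derive (xii) and (xiii) by routing through (iii) and you reuse $XAA^{\dagger}=X$ for (viii) and (x), where the paper inserts $A^{\dagger}=A^{\dagger}AA^{\dagger}$ and recomputes directly), and none of these change the substance of the argument.
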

\begin{proof}
\begin{enumerate}[(i)]
\item We have $AX(A^{\dagger})^*=AA^{GD}AA^*(A^{\dagger})^*=A(A^{\dagger}A)^*=AA^{\dagger}A=A$. So, $AX(A^{\dagger})^*=A$.
\item Observe that $A^kX=A^kA^{GD}AA^*=A^{GD}A^{k+1}A^{GD}AA^*=A^{GD}A^kAA^*=A^{GD}A^{k+1}A^*=A^{k}A^*=A^{k+1}A^{GD}A^*$. Hence, $A^kX=A^{k+1}A^{GD}A^*=A^{GD}A^{k+1}A^*$.
\item Clearly, $A^{\dagger}AX=A^{\dagger}AA^{GD}AA^*=A^{\dagger}AA^*=A^*(A^*)^{\dagger}A^*=A^*$, i.e., $A^{\dagger}AX=A^*$.
\item From (ii), we have $A^kX=A^{k+1}A^{GD}A^*$, therefore, $A^kX(A^{\dagger})^*=A^{k+1}A^{GD}A^*(A^{\dagger})^*=A^k(A^{\dagger}A)^*=A^{k-1}AA^{\dagger}A=A^k.$
\item By Theorem \ref{sa2}, we have $X(A^{\dagger})^*=A^{GD}A$, and thus  $X(A^{\dagger})^*A^{GD}=A^{GD}AA^{GD}$.
\item Applying Theorem \ref{sa2} again, we obtain $X(A^{\dagger})^*A^k=A^{GD}AA^k=A^{GD}A^{k+1}=A^k$, i.e., $X(A^{\dagger})^*A^k=A^k$.
\item  This is obvious as $A^{\dagger}AX^2=A^{\dagger}AA^{GD}AA^*X=A^{\dagger}AA^*X=A^*X$.
\item From (vii), we get $A^{\dagger}AX^2=A^*X$. Now, $A^{\dagger}AX^2AA^{\dagger}=A^*XAA^{\dagger}=A^*A^{GD}AA^*(A^*)^{\dagger}A^*=A^*A^{GD}AA^*$. Hence, $A^{\dagger}AX^2AA^{\dagger}=A^*X$.
\item Clearly, $XAX=A^{GD}AA^*AA^{GD}AA^*=A^{GD}AA^*AA^*=A^{GD}(AA^*)^2$.
\item We have $XAA^{\dagger}X=A^{GD}AA^*AA^{\dagger}X=A^{GD}AA^*(A^*)^{\dagger}A^*X=A^{GD}AA^*X=XX=X^2$. Therefore, $XAA^{\dagger}X=X^2$.
\item This is quite trivial as $AX=AA^{GD}AA^*=AA^*$, i.e., $(AX)^*=AX$.
\item By Theorem \ref{sa2}, we have $X(A^{\dagger})^*=A^{GD}A$. Since, $(A^{\dagger})^*X(A^{\dagger})^*=(A^{\dagger})^*A^{GD}A=(A^{\dagger}AA^{\dagger})^*A^{GD}A=(A^{\dagger})^*A^{\dagger}AA^{GD}A=(A^{\dagger})^*A^{\dagger}A=(A^{\dagger})^*$. Hence, $(A^{\dagger})^*X(A^{\dagger})^*=(A^{\dagger})^*.$

 
\item
We have
\begin{align}\label{ka8}
    (A^{\dagger})^*X&=(A^{\dagger})^*A^{GD}AA^*\nonumber\\
    &=(A^{\dagger}AA^{\dagger})^*A^{GD}AA^*\nonumber\\
    &=(A^{\dagger})^*A^{\dagger}AA^{GD}AA^*\nonumber\\
    &=(A^{\dagger})^*A^{\dagger}AA^*\nonumber\\
    &=(A^{\dagger})^*A^*(A^*)^{\dagger}A^*\nonumber\\
    &=(A^{\dagger})^*A^*\nonumber\\
    &=AA^{\dagger}.
\end{align}
\item Clearly, $(X(A^{\dagger})^*)^2=X(A^{\dagger})^*X(A^{\dagger})^*=A^{GD}AA^*(A^{\dagger})^*A^{GD}AA^*(A^{\dagger})^*=$\\$A^{GD}AA^{GD}AA^*(A^{\dagger})^*=A^{GD}AA^*(A^{\dagger})^*=X(A^{\dagger})^*$. 
\item It can be easily obtained from \eqref{ka8} and part (xiii) of Lemma \ref{sa3}.

\end{enumerate}
\end{proof}

\begin{remark}
    From (xiii), we have $(A^{\dagger})^*X$ is an orthogonal projector onto  $R(A)$.
\end{remark}


Under some  conditions, a GD-star matrix coincides with some well known generalized inverses. The same result is proved next.

\begin{theorem}\label{thm3.2}
Let $A \in \mathbb{C}^{m\times m}$ and $ind(A)=k$. For each $A^{GD}\in A\{GD\}$, a GD-star matrix $X$ of the matrix $A$ satisfies the following properties:
\begin{enumerate}[(i)]
    \item If $A^*=A$, then $X\in A^{\dagger}\{1,2,3\}$.
\item If $A^{GD}=A^{\#}$, then $X=A^{\#,*}.$
\item If $A$ is a partial isometry, then $X=A^{GD,\dagger},$
and thus $AX=P_{R(A)}$.
\end{enumerate}
\end{theorem}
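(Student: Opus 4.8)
The engine for all three parts is the defining identity $X = A^{GD}AA^*$ together with the facts collected in Lemma \ref{sa3} and Theorem \ref{sa2}. Parts (ii) and (iii) reduce to direct substitution, so I would dispatch them quickly; part (i) is the only one requiring genuine verification.

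For part (i), assume $A^* = A$, so that $A$ is Hermitian and hence EP, giving $A^\dagger$ Hermitian with $(A^\dagger)^* = A^\dagger$ and $AA^\dagger = A^\dagger A = P_{R(A)}$. The plan is to check the three defining equations of a $\{1,2,3\}$-inverse of $A^\dagger$, namely $A^\dagger X A^\dagger = A^\dagger$, $X A^\dagger X = X$, and $(A^\dagger X)^* = A^\dagger X$. The $\{2\}$-condition $X A^\dagger X = X$ is exactly Lemma \ref{sa3}(xv) specialized to $(A^\dagger)^* = A^\dagger$. For the $\{3\}$-condition I would invoke Lemma \ref{sa3}(xiii), which gives $(A^\dagger)^* X = AA^\dagger$; under $A^* = A$ this reads $A^\dagger X = AA^\dagger = P_{R(A)}$, an orthogonal projector, so self-adjointness of $A^\dagger X$ is immediate. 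The $\{1\}$-condition then becomes $A^\dagger X A^\dagger = P_{R(A)} A^\dagger$, and this is the one step using the EP structure: since $AA^\dagger = A^\dagger A$, we have $P_{R(A)} A^\dagger = AA^\dagger A^\dagger = A^\dagger A A^\dagger = A^\dagger$, equivalently $R(A^\dagger) = R(A)$ so the projector fixes the columns of $A^\dagger$. I expect this identity to be the main (essentially the only nontrivial) obstacle in the whole proof.

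For part (ii), assume $A^{GD} = A^\#$; note this forces $ind(A)=1$. Substituting into the definition gives $X = A^{GD}AA^* = A^\# A A^* = A^{\#,*}$, so there is nothing further to prove once the definition of the group-star matrix $A^{\#,*} = A^\# A A^*$ is recalled.

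For part (iii), assume $A$ is a partial isometry, i.e. $A^* = A^\dagger$. Substituting, $X = A^{GD}AA^* = A^{GD}AA^\dagger = A^{GD,\dagger}$ by the definition of the GDMP inverse. For the final claim I would use Lemma \ref{sa3}(xi), which yields $AX = AA^*$; since $A^* = A^\dagger$ for a partial isometry, $AX = AA^\dagger = P_{R(A)}$, as required. Alternatively, $AX = AA^{GD}AA^\dagger = AA^\dagger = P_{R(A)}$ follows directly from $AA^{GD}A = A$, giving a one-line derivation independent of the lemma.
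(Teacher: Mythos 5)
Your proposal is correct and follows essentially the same route as the paper: parts (ii) and (iii) are the same direct substitutions, and for part (i) the paper likewise verifies the $\{2\}$-, $\{3\}$-, and $\{1\}$-conditions for $A^\dagger$ using the identity $(A^\dagger)^*X = AA^\dagger$ (your Lemma \ref{sa3}(xiii), the paper's equation \eqref{ka8}) together with the fact that $A^*=A$ forces $AA^\dagger = A^\dagger A$. The only cosmetic difference is that you cite Lemma \ref{sa3}(xiii)/(xv) where the paper re-derives the same identities by direct computation.
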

\begin{proof}
(i) Since, $A^{\dagger}X=(A^{\dagger})^*X=AA^{\dagger}$ by equation \eqref{ka8} and $A^*=A$. We have\begin{align*}
XA^{\dagger}X&=A^{GD}AA^*A^{\dagger}A^{GD}AA^*\\
    &=A^{GD}AAA^{\dagger}A^{GD}AA\\
    &=A^{GD}AA^{GD}AA \text{ ($A^*=A$ implies $AA^{\dagger}=A^{\dagger}A$)}\\
    &=A^{GD}AA\\
    &=A^{GD}AA^*\\
    &=X,
\end{align*}
and
\begin{align*}
    A^{\dagger}XA^{\dagger}&=(A^{\dagger})^*A^{GD}AA^*(A^{\dagger})^* \text{ (since $A^*=A$)}\\
    &=(A^{\dagger}AA^{\dagger})^*A^{GD}A\\
    &=(A^{\dagger})^*A^{\dagger}AA^{GD}A\\
    &=(A^{\dagger})^*A^{\dagger}A\\
    &=A^{\dagger}.
\end{align*}\\
(ii) We have $A^{GD}=A^{\#}$. So, $k=1$. Then, $A^{GD,*}=A^{GD}AA^*=A^{\#}AA^*=A^{\#,*}.$\\
(iii) Since $A$ is a partial isometry, therefore, $A^*=A^{\dagger}$. Thus, $A^{GD,*}=A^{GD}AA^*=A^{GD}AA^{\dagger}=A^{GD,\dagger}.$ Therefore, $AA^{GD,*}=AA^{\dagger}=P_{R(A)}$.
\end{proof}

Next we collect some properties of a GD-star matrix for the class of EP matrices. 
\begin{theorem}\label{thm32}
Let $A \in \mathbb{C}^{m\times m}$ be an EP matrix. For each $A^{GD}\in A\{GD\}$, a GD-star matrix $X$ of the matrix $A$ satisfies the following properties:
\begin{enumerate}[(i)]
    \item $A^{\dagger}X=A^{\dagger}A^*$.
\item $AA^{\dagger}XAA^{\dagger}=A^*$.
\item $X=A^*$.
\end{enumerate}
\end{theorem}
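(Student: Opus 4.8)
The plan is to treat part (iii) as the heart of the statement and to obtain (i) and (ii) either as immediate consequences of it or directly from Lemma \ref{sa3}. The one structural fact I would isolate first is that an EP matrix is group invertible, so, as recalled earlier for EP matrices, $ind(A)=k\le 1$. The case $k=0$ (i.e. $A$ invertible) gives $X=A^{-1}AA^*=A^*$ at once, so all the real work lies in the case $k=1$.

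For part (iii) I would argue as follows. With $k=1$, the defining relation $A^{GD}A^{k+1}=A^{k}$ reads $A^{GD}A^{2}=A$. Hence for any $y=Ax\in R(A)$ one has $A^{GD}A\,y=A^{GD}A^{2}x=Ax=y$, i.e. the idempotent $A^{GD}A$ restricts to the identity on $R(A)$. Because $A$ is EP, $R(A^{*})=R(A)$, so every column of $A^{*}$ lies in $R(A)$ and is therefore fixed by $A^{GD}A$. This yields $X=A^{GD}AA^{*}=A^{*}$, which is part (iii).

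Parts (i) and (ii) then drop out. Substituting $X=A^{*}$ gives (i) directly, while for (ii) I would use the EP identity $AA^{\dagger}A^{*}=A^{\dagger}AA^{*}=A^{*}$ together with the standard relation $A^{*}AA^{\dagger}=A^{*}$ to compute $AA^{\dagger}XAA^{\dagger}=AA^{\dagger}A^{*}AA^{\dagger}=A^{*}$. Alternatively, and independently of (iii), (i) follows by left-multiplying the identity $A^{\dagger}AX=A^{*}$ of Lemma \ref{sa3}(iii) by $A^{\dagger}$ and simplifying $A^{\dagger}A^{\dagger}A=A^{\dagger}$ via EP, and (ii) follows by replacing $A^{\dagger}A$ with $AA^{\dagger}$ in that same identity and right-multiplying by $AA^{\dagger}$.

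The only genuine obstacle is part (iii): one must recognize that EP forces the index to be at most one, so that the GD relation $A^{GD}A^{2}=A$ becomes available, and then combine it with $R(A^{*})=R(A)$. Once these two inputs are in place the argument is forced, and everything else is routine Moore--Penrose bookkeeping. A minor point I would state carefully is that although $A^{GD}$ is not unique, only the relation $A^{GD}A^{2}=A$ (valid for every GD inverse when $k=1$) is used, so the conclusion $X=A^{*}$ holds for every choice of $A^{GD}$, consistent with the uniqueness of $A^{*}$.
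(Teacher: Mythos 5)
Your proposal is correct, but it reaches the conclusion by a noticeably different route than the paper, so a comparison is worth recording. For part (iii) the paper invokes Corollary \ref{co2.1}(ii) to get the commutation $AA^{GD}=A^{GD}A$ (valid since $ind(A)=1$ for EP matrices) and then grinds through a chain of Moore--Penrose identities, $X=AA^{GD}(AA^{\dagger}A)^*=\cdots=AA^{\dagger}A^*=A^*$; you instead use only the single defining equation $A^{GD}A^{2}=A$ to see that the idempotent $A^{GD}A$ restricts to the identity on $R(A)$, and then feed in $R(A^*)=R(A)$ to conclude $A^{GD}AA^{*}=A^{*}$ directly. Your argument is more conceptual and arguably cleaner: it isolates exactly which GD equation is needed and makes transparent why the answer is independent of the choice of $A^{GD}$. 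The organization also differs: the paper proves (i) and (ii) by independent direct computations (using only $AA^{GD}A=A$ and the EP commutation $AA^{\dagger}=A^{\dagger}A$, with no appeal to $ind(A)=1$), whereas you treat (iii) as primary and obtain (i) and (ii) as immediate substitutions, supplemented by the standard identities $A^{*}AA^{\dagger}=A^{*}$ and $AA^{\dagger}A^{*}=A^{*}$; your fallback derivation of (i) from Lemma \ref{sa3}(iii) is also sound. Both approaches are valid; the paper's buys part-by-part independence, while yours buys brevity and a clearer view of where the EP hypothesis actually enters.
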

\begin{proof}
(i) We have\begin{align*}
    A^{\dagger}X&=A^{\dagger}A^{GD}AA^*\\
    &=A^{\dagger}AA^{\dagger}A^{GD}AA^*\\
    &=A^{\dagger}A^{\dagger}AA^{GD}AA^{*}\\
    &=A^{\dagger}A^{\dagger}AA^*\\
    &=A^{\dagger}AA^{\dagger}A^*\\
    &=A^{\dagger}A^*.
\end{align*}
(ii) Again, \begin{align*}
    AA^{\dagger}XAA^{\dagger}&=A^{\dagger}AA^{GD}AA^*AA^{\dagger}\\
    &=A^{\dagger}AA^*(A^*)^{\dagger}A^*\\
    &=A^{\dagger}AA^{*}\\
    &=A^*(A^*)^{\dagger}A^*\\
    &=A^*.
    \end{align*}
    (iii)  We know $ind(A)=1$ because $A$ is an EP matrix, so, $AA^{GD}=A^{GD}A$ by Corollary \ref{co2.1} (ii). Now, we have
    $X=A^{GD}AA^*=AA^{GD}(AA^{\dagger}A)^*    =AA^{GD}A^{\dagger}AA^*=AA^{GD}AA^{\dagger}A^*=AA^{\dagger}A^*=A^*.$ Therefore, if $A$ is an EP matrix, then $A^{GD,*}=A^*$.

\end{proof}

\begin{remark}
\begin{enumerate}

    \item If $A$ is an EP-Hermitian matrix, then $A^{GD,*}=A$.
    \item  Since $A$ is an EP matrix, so $AA^{\dagger}=A^{\dagger}A$. Again, using Corollary \ref{co2.1} (ii), we have $A^{GD}A=AA^{GD}$. Therefore, $A^{GD\dagger}=A^{GD}AA^{\dagger}=AA^{GD}A^{\dagger}AA^{\dagger}=AA^{GD}AA^{\dagger}A^{\dagger}=AA^{\dagger}A^{\dagger}=A^{\dagger}$. Similarly, one can show that $A^{D,*}=A^{\dagger}$. Hence, $A^{GD\dagger}=A^{\dagger}=A^{D,*}$ whenever $A$ is EP.
\end{enumerate}
\end{remark}

Now, we establish a representation of a GD-star matrix. 
\begin{theorem}\label{thm2.8}
Let $A\in\mathbb{C}^{m\times m}$ be in the form of \eqref{ka5} with $ind(A)=k$. If  $A^{GD}\in A\{GD\}$, then 
\begin{align}\label{ka6} A^{GD,*}=P\begin{bmatrix}C^*+C^{-1}SS^*+C^{-(k+1)}(\widehat{T'}-\widehat{T}N^{-})NS^*&C^{-1}SN^*+C^{-(k+1)}(\widehat{T'}-\widehat{T}N^{-})NN^*\\N^{-}NS^*&N^{-}NN^* \end{bmatrix}P^*,\end{align}
where $P$ is a unitary matrix and $\widehat{T'}-\widehat{T}N^{-}=-C^{k}SN^{-}+\displaystyle\sum_{j=0}^{k-1}C^{j}SN^{k-j-1}(I-NN^{-})$.
\end{theorem}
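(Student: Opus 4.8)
The plan is to obtain \eqref{ka6} by a direct block-matrix computation of the product $A^{GD,*} = A^{GD}AA^*$, using the three available factorizations. From \eqref{ka5} we have $A = P\begin{bmatrix} C & S \\ O & N\end{bmatrix}P^*$, and since $P$ is unitary, taking the conjugate transpose gives $A^* = P\begin{bmatrix} C^* & O \\ S^* & N^*\end{bmatrix}P^*$. For the GD inverse I would use the representation \eqref{ka7}, abbreviating the off-diagonal block by $W := C^{-(k+1)}(\widehat{T'} - \widehat{T}N^{-})$, so that $A^{GD} = P\begin{bmatrix} C^{-1} & W \\ O & N^{-}\end{bmatrix}P^*$.

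First I would form $A^{GD}A$. Because $P^*P = I$, the outer unitary factors telescope and only the inner $2\times 2$ block matrices need to be multiplied, giving
\[
A^{GD}A = P\begin{bmatrix} C^{-1}C & C^{-1}S + WN \\ O & N^{-}N\end{bmatrix}P^* = P\begin{bmatrix} I & C^{-1}S + WN \\ O & N^{-}N\end{bmatrix}P^*,
\]
where the $(1,1)$ block collapses to the identity. I would then abbreviate the $(1,2)$ block as $U := C^{-1}S + WN$ to keep the next step readable.

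It remains to post-multiply by $A^*$. Telescoping the unitary factors once more yields
\[
A^{GD,*} = P\begin{bmatrix} I & U \\ O & N^{-}N\end{bmatrix}\begin{bmatrix} C^* & O \\ S^* & N^*\end{bmatrix}P^* = P\begin{bmatrix} C^* + US^* & UN^* \\ N^{-}NS^* & N^{-}NN^*\end{bmatrix}P^*.
\]
Substituting back $U = C^{-1}S + WN$ and $W = C^{-(k+1)}(\widehat{T'} - \widehat{T}N^{-})$, the $(1,1)$ entry expands to $C^* + C^{-1}SS^* + C^{-(k+1)}(\widehat{T'} - \widehat{T}N^{-})NS^*$ and the $(1,2)$ entry to $C^{-1}SN^* + C^{-(k+1)}(\widehat{T'} - \widehat{T}N^{-})NN^*$, while the bottom row is already in the required form; this is precisely \eqref{ka6}. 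The computation is entirely routine, so the only real obstacle is the bookkeeping: keeping the block $W$ packaged through both products so that the off-diagonal contributions $WNS^*$ and $WNN^*$ land in the correct entries, rather than prematurely expanding the sum $\sum_{j=0}^{k-1}C^{j}SN^{k-j-1}(I-NN^{-})$ that defines it.
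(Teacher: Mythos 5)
Your computation is correct and is exactly the intended argument: the paper actually omits the proof of this theorem, but the special case $S=O$ (the corollary immediately following it) is proved by precisely this block multiplication of $A^{GD}$, $A$, and $A^*$ with the unitary factors telescoping. Your version, using the representation \eqref{ka7} and keeping $W=C^{-(k+1)}(\widehat{T'}-\widehat{T}N^{-})$ packaged until the end, fills in the omitted general case correctly.
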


If $S=O$ in Theorem \ref{thm2.8}, then the 
above theorem reduces to the following corollary.
\begin{corollary}
Let $A\in\mathbb{C}^{m\times m}$. If $P$ is a unitary matrix in \eqref{GD1}, then 
$$A^{GD,*}=P\begin{bmatrix}C^*&O\\O&N^{-}NN^* 
\end{bmatrix}P^*.$$
\end{corollary}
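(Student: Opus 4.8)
The plan is to specialize Theorem~\ref{thm2.8} to the case $S=O$ and simplify the resulting block matrix. When $S=O$, the core-nilpotent form \eqref{ka5} becomes $A=P\begin{bmatrix}C&O\\O&N\end{bmatrix}P^*$, which is precisely the form \eqref{GD1} but now with $P$ unitary (so $P^{-1}=P^*$). First I would substitute $S=O$ directly into the quantity $\widehat{T'}-\widehat{T}N^{-}=-C^{k}SN^{-}+\sum_{j=0}^{k-1}C^{j}SN^{k-j-1}(I-NN^{-})$. Since every term in this expression carries a factor of $S$, setting $S=O$ immediately gives $\widehat{T'}-\widehat{T}N^{-}=O$. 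This is the key observation that collapses the off-diagonal correction terms.

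Next I would carry this substitution through the four blocks of the matrix in \eqref{ka6}. The $(1,1)$ block $C^*+C^{-1}SS^*+C^{-(k+1)}(\widehat{T'}-\widehat{T}N^{-})NS^*$ has its second term killed by $SS^*=O$ and its third term killed by $\widehat{T'}-\widehat{T}N^{-}=O$ (and also by $S^*=O$), leaving just $C^*$. The $(1,2)$ block $C^{-1}SN^*+C^{-(k+1)}(\widehat{T'}-\widehat{T}N^{-})NN^*$ vanishes entirely, since the first term contains $S$ and the second contains $\widehat{T'}-\widehat{T}N^{-}=O$. The $(2,1)$ block $N^{-}NS^*$ vanishes because of the factor $S^*=O$. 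The $(2,2)$ block $N^{-}NN^*$ is unaffected by the substitution and remains as stated. Assembling these gives exactly
\begin{align*}
A^{GD,*}=P\begin{bmatrix}C^*&O\\O&N^{-}NN^*\end{bmatrix}P^*,
\end{align*}
which is the claimed corollary.

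This argument is essentially a routine specialization, so I do not anticipate a genuine obstacle; the only point requiring mild care is to confirm that the hypotheses match. The corollary is phrased using \eqref{GD1}, which a priori has $P$ merely nonsingular, whereas Theorem~\ref{thm2.8} requires the unitary form \eqref{ka5}. The hypothesis of the corollary explicitly stipulates that $P$ is a unitary matrix in \eqref{GD1}, so \eqref{GD1} with $S=O$ becomes a genuine instance of \eqref{ka5}, and Theorem~\ref{thm2.8} applies directly. With that alignment in place, the computation above furnishes the result.
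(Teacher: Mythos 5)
Your proof is correct, and it is exactly the specialization the paper advertises in the sentence introducing the corollary (``If $S=O$ in Theorem~\ref{thm2.8}, then the above theorem reduces to the following corollary''). However, the proof the paper actually writes down is different in character: rather than substituting $S=O$ into the four blocks of \eqref{ka6}, it computes $A^{GD,*}=A^{GD}AA^*$ directly from the definition, multiplying the three block-diagonal matrices supplied by \eqref{GD1} and \eqref{GD2} (with $P$ unitary, so $P^{-1}=P^*$), which immediately gives $C^{-1}CC^*=C^*$ in the top block and $N^{-}NN^*$ in the bottom block. The trade-off is minor but real: the paper's computation is self-contained and does not lean on Theorem~\ref{thm2.8} (which the paper states without proof), whereas your route inherits whatever justification that theorem has; on the other hand, your argument makes transparent why the corollary is literally the $S=O$ case of the general formula, including the observation that every term of $\widehat{T'}-\widehat{T}N^{-}$ carries a factor of $S$ and hence vanishes. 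Your remark about reconciling the hypotheses --- that \eqref{GD1} with $P$ unitary and $S=O$ is a genuine instance of \eqref{ka5} --- is the right point to check and is handled correctly.
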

\begin{proof}
By the definition of a GD-star matrix, we have 
\begin{align*}
    A^{GD,*}&=A^{GD}AA^*\\
    &=P\begin{bmatrix}
{C}^{-1}&O\\O&N^{-}\end{bmatrix}P^{*}P\begin{bmatrix}
{C}&O\\O&N\end{bmatrix}P^{*}
P\begin{bmatrix}
{C^*}&O\\O&N^*\end{bmatrix}P^*\\
&=P\begin{bmatrix}C^{-1}CC^*&O\\O&N^{-}NN^*
\end{bmatrix}P^*\\
&=P\begin{bmatrix}C^*&O\\O&N^{-}NN^* 
\end{bmatrix}P^*.
\end{align*}
\end{proof}
\begin{remark}
    Let $A\in\mathbb{C}^{m\times m}$ be an EP matrix. Then, \begin{align*}
    A=P\begin{bmatrix}
{C}&0\\0&0\end{bmatrix}P^{*},
\end{align*}
 where $P$ is unitary. So, \begin{align*}
    A^{GD}=P\begin{bmatrix}
{C}^{-1}&O\\O&M\end{bmatrix}P^{*}, 
\end{align*}
 where $M$ is an arbitrary matrix of suitable size, and 
$$A^{GD,*}=P\begin{bmatrix}C^*&O\\O&O \end{bmatrix}P^*.$$
\end{remark}

The {\it Hartwig-Spindelb$\ddot{\text{o}}$ck
decomposition} \cite{Har1,Har2} of any matrix $A\in\mathbb{ C}^{m\times m}$ of rank $r$ is given by
\begin{align}\label{dr1}
A=U\begin{bmatrix}
{\sum}K&{\sum}L\\O&O\end{bmatrix}U^*,\end{align}
where $U\in\mathbb{C}^{m\times m}$ is a unitary matrix and $\sum=diag(\sigma_1I_{r_1},\sigma_2I_{r_2},...,\sigma_tI_{r_t})$  is a diagonal matrix of the nonzero singular values of $A$, $\sigma_1>\sigma_2> ...>\sigma_t$, $r_1+r_2+...+r_t=r$
$K\in\mathbb{C}^{r\times r}$ and $L\in \mathbb{C}^{r\times {n-r}}$  satisfy
$$KK^*+LL^*=I_r.$$
The Hartwig-Spindelb$\ddot{\text{o}}$ck decomposition is useful to obtain characterizations of various known classes
of matrices. This decomposition is easily applicable while dealing with generalized and hypergeneralized
projectors.  Additionally, this representation is also used  to investigate properties of singular periodic matrices. 
Hernández {\it et al.} \cite{thome3} proposed the canonical form of a GD inverse using the Hartwig-Spindelb$\ddot{\text{o}}$ck decomposition. The same is recalled next.
\begin{theorem}(\cite{thome3})\label{iit}
    Let $A\in\mathbb{C}^{m\times m}$ be the form of \eqref{dr1} and $ind(A)=k$. Then,
a GD inverse of matrix $A$ is
$A^{GD}=U\begin{bmatrix}X_1&X_2\\X_3&X_4
\end{bmatrix}U^*$, where $X_i$ satisfies the following conditions:
\begin{enumerate}[(a)]
    \item $\sum KX_1+\sum L X_3=I_{r}$,
    \item $X_1(\sum K)^k=(\sum K)^{k-1}$,
    \item $X_3(\sum K)^{k-1}=O$,
    \item $(\sum K)^{k+1}X_2+(\sum K)^{k}X_4=(\sum K)^{k-1}\sum L$.
\end{enumerate}
\end{theorem}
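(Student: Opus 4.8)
The plan is to exploit the unitary invariance of the three defining equations of a GD inverse. Writing $\Sigma$ for the diagonal matrix of singular values appearing in \eqref{dr1}, set $M=\begin{bmatrix}\Sigma K&\Sigma L\\O&O\end{bmatrix}$ so that $A=UMU^*$, and look for a GD inverse in the form $A^{GD}=UYU^*$ with $Y=\begin{bmatrix}X_1&X_2\\X_3&X_4\end{bmatrix}$ partitioned conformally. Since $U$ is unitary, the three conditions $AXA=A$, $XA^{k+1}=A^k$, $A^{k+1}X=A^k$ characterizing a GD inverse (via Corollary \ref{co2.1}(iii)) collapse to $MYM=M$, $YM^{k+1}=M^k$, and $M^{k+1}Y=M^k$. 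The problem is thereby reduced to a block computation, and the proof splits into a necessity direction (a GD inverse must carry the stated block form satisfying (a)--(d)) and a sufficiency direction (any $Y$ with (a)--(d) yields a GD inverse).

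First I would record the power formula $M^j=\begin{bmatrix}(\Sigma K)^j&(\Sigma K)^{j-1}\Sigma L\\O&O\end{bmatrix}$ for $j\ge 1$, proved by a one-line induction. Two structural facts then drive everything. Because $KK^*+LL^*=I_r$ and $\Sigma$ is invertible, the $r\times m$ matrix $\Sigma[K\;L]$ satisfies $\Sigma[K\;L]\bigl([K\;L]^*\Sigma^{-1}\bigr)=\Sigma(KK^*+LL^*)\Sigma^{-1}=I_r$, so it is right invertible with explicit right inverse $[K\;L]^*\Sigma^{-1}$. Moreover, using that right inverse one gets $rank(M^j)=rank((\Sigma K)^{j-1})$ for $j\ge 1$, whence the hypothesis $ind(A)=k$ forces $ind(\Sigma K)=k-1$; equivalently, $R((\Sigma K)^j)$ is stationary for $j\ge k-1$. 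This index identity is the device that permits interchanging different powers of $\Sigma K$.

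Next I would expand each reduced equation blockwise. The identity $MYM=M$ reads $(\Sigma K X_1+\Sigma L X_3)\,\Sigma[K\;L]=\Sigma[K\;L]$, and post-multiplying by the right inverse $[K\;L]^*\Sigma^{-1}$ yields $\Sigma K X_1+\Sigma L X_3=I_r$, which is exactly (a). Expanding $YM^{k+1}=M^k$ gives $X_1(\Sigma K)^{k+1}=(\Sigma K)^k$ and $X_3(\Sigma K)^{k+1}=O$ (together with two further blocks carrying a trailing $\Sigma L$ that are immediate consequences of these). The index identity $ind(\Sigma K)=k-1$ is then used to replace these by the lower-power conditions (b) $X_1(\Sigma K)^k=(\Sigma K)^{k-1}$ and (c) $X_3(\Sigma K)^{k-1}=O$: passing to the core--nilpotent form $\Sigma K=Q\begin{bmatrix}C_0&O\\O&N_0\end{bmatrix}Q^{-1}$ with $N_0^{\,k-1}=O$, both the power-$(k+1)$ and power-$k$ versions pin down the same data (the core block of $X_1$ equals $C_0^{-1}$ and the corresponding block of $X_3$ vanishes), since the nilpotent part already dies at exponent $k-1$. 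Finally, the $(1,2)$ block of $M^{k+1}Y=M^k$ produces condition (d) relating $X_2$ and $X_4$, while its $(1,1)$ block is simply $(\Sigma K)^k$ times equation (a), and the bottom-row blocks are automatically zero on both sides.

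The main obstacle will be the power-reduction step, namely proving rigorously that the raw block equations at exponents $k+1$ and $k$ are equivalent to the stated conditions at exponents $k$ and $k-1$; this is exactly where $ind(\Sigma K)=k-1$ must be invoked, and the cleanest route is the core--nilpotent diagonalization above, which shows every relevant relation constrains data only at exponents $\ge k-1$, where the nilpotent block has vanished. For sufficiency I would run the same block arithmetic in reverse: assuming (a)--(d), substitute into $MYM$, $YM^{k+1}$, $M^{k+1}Y$ and check they reduce to $M$, $M^k$, $M^k$, using right-invertibility of $\Sigma[K\;L]$ for the first and the index identity for the other two. A secondary point of care is the bookkeeping of the right-hand column blocks $X_2,X_4$, which remain genuinely underdetermined (reflecting the non-uniqueness of the GD inverse) and are constrained only by (d).
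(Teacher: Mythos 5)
The paper does not prove this statement at all---it is quoted verbatim from \cite{thome3} as a known representation result---so there is no in-paper argument to compare against; your proposal supplies a proof where the paper supplies only a citation. Your argument is sound: the reduction to $MYM=M$, $YM^{k+1}=M^k$, $M^{k+1}Y=M^k$ by unitary invariance, the power formula for $M^j$, the right-invertibility of $\Sigma[K\;L]$ coming from $KK^*+LL^*=I_r$ (which yields both condition (a) and the rank identity $\mathrm{rank}(M^j)=\mathrm{rank}((\Sigma K)^{j-1})$, hence $ind(\Sigma K)=k-1$), and the core--nilpotent reduction showing $X_1(\Sigma K)^{k+1}=(\Sigma K)^k \Leftrightarrow X_1(\Sigma K)^k=(\Sigma K)^{k-1}$ and $X_3(\Sigma K)^{k+1}=O \Leftrightarrow X_3(\Sigma K)^{k-1}=O$ all check out, and the trailing-$\Sigma L$ blocks and the $(1,1)$ block of $M^{k+1}Y=M^k$ are indeed redundant given (a)--(c). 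One point worth making explicit: your $(1,2)$-block computation of $M^{k+1}Y=M^k$ yields $(\Sigma K)^{k+1}X_2+(\Sigma K)^{k}\Sigma L\,X_4=(\Sigma K)^{k-1}\Sigma L$, not the condition (d) as printed, which reads $(\Sigma K)^{k}X_4$ without the factor $\Sigma L$; since $X_4$ is $(m-r)\times(m-r)$ while $(\Sigma K)^k$ is $r\times r$, the printed version is not even dimensionally consistent, so your derivation in fact identifies and corrects a typographical slip in the stated theorem rather than conflicting with it. The only hypotheses you should state explicitly are $k\ge 1$ (so that $(\Sigma K)^{k-1}$ makes sense and the index argument applies) and, implicitly, $r<m$ so that the zero block row is present.
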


Now, we give a representation of a GD-star matrix by using the Hartwig-Spindelb$\ddot{\text{o}}$ck decomposition.
\begin{theorem}
Let $A\in\mathbb{C}^{m\times m}$ be of the form as in \eqref{dr1} with $ind(A)=k$. If  $A^{GD}\in A\{GD\}$ is of the form as in Theorem \ref{iit}, then 
$$A^{GD,*}=U\begin{bmatrix}X_1\sum\sum^*&O\\
X_3\sum\sum^*&O
\end{bmatrix}U^*.$$

\end{theorem}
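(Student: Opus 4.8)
The plan is to compute $A^{GD,*} = A^{GD} A A^*$ directly by substituting the Hartwig--Spindelb\"ock form \eqref{dr1} of $A$ together with the assumed form $A^{GD} = U\begin{bmatrix}X_1 & X_2\\ X_3 & X_4\end{bmatrix}U^*$ from Theorem \ref{iit}. Since all three factors carry the same unitary $U$ and $U^*U = I$, the conjugations telescope and the whole computation reduces to multiplying three $2\times 2$ block matrices. The only nontrivial input beyond bookkeeping is that the bottom block-row of $A$ in \eqref{dr1} is zero.

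First I would form $A A^*$. Writing $A = U\begin{bmatrix}\Sigma K & \Sigma L\\ O & O\end{bmatrix}U^*$, we get
\begin{align*}
AA^* = U\begin{bmatrix}\Sigma K & \Sigma L\\ O & O\end{bmatrix}\begin{bmatrix}K^*\Sigma^* & O\\ L^*\Sigma^* & O\end{bmatrix}U^* = U\begin{bmatrix}\Sigma(KK^*+LL^*)\Sigma^* & O\\ O & O\end{bmatrix}U^*.
\end{align*}
Here the identity $KK^* + LL^* = I_r$ from the Hartwig--Spindelb\"ock decomposition collapses the top-left block to $\Sigma\Sigma^*$, so $AA^* = U\begin{bmatrix}\Sigma\Sigma^* & O\\ O & O\end{bmatrix}U^*$. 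This is the key simplification that makes the final answer clean.

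Next I would left-multiply by $A^{GD}$. Using $A^{GD} = U\begin{bmatrix}X_1 & X_2\\ X_3 & X_4\end{bmatrix}U^*$ and the $AA^*$ just obtained,
\begin{align*}
A^{GD,*} = A^{GD}(AA^*) = U\begin{bmatrix}X_1 & X_2\\ X_3 & X_4\end{bmatrix}\begin{bmatrix}\Sigma\Sigma^* & O\\ O & O\end{bmatrix}U^* = U\begin{bmatrix}X_1\Sigma\Sigma^* & O\\ X_3\Sigma\Sigma^* & O\end{bmatrix}U^*,
\end{align*}
which is exactly the claimed formula. Note that the second block-column vanishes automatically because the second block-column of $AA^*$ is zero, so no information about $X_2$ or $X_4$ survives.

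There is essentially no hard part here: the result is a bracketed identity following from two block multiplications and the single normalization $KK^*+LL^* = I_r$. The only point requiring mild care is keeping the conjugate transposes straight when forming $A^*$ (so that $A^* = U\begin{bmatrix}K^*\Sigma^* & O\\ L^*\Sigma^* & O\end{bmatrix}U^*$, with the off-diagonal $L^*\Sigma^*$ landing in the bottom-left position), and recognizing that the defining conditions (a)--(d) of Theorem \ref{iit} are not needed at all for this particular representation. I would therefore present the proof simply as the displayed chain above, inserting the remark that $KK^*+LL^* = I_r$ at the step where $AA^*$ is simplified.
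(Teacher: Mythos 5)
Your proposal is correct and follows essentially the same route as the paper: both compute $A^{GD,*}=A^{GD}AA^*$ by multiplying the block forms, collapse $AA^*$ to $U\begin{bmatrix}\Sigma\Sigma^* & O\\ O & O\end{bmatrix}U^*$ via $KK^*+LL^*=I_r$, and read off the result. Your observation that conditions (a)--(d) of Theorem \ref{iit} are never used is accurate.
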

\begin{proof}By \eqref{dr1} and Theorem \ref{iit}, we have an expression for a GD-star matrix that is computed as
\begin{align*}
A^{GD,*}=A^{GD}AA^*&=U\begin{bmatrix}X_1&X_2\\X_3&X_4
\end{bmatrix}U^*U\begin{bmatrix}
{\sum}K&{\sum}L\\O&O\end{bmatrix}U^*U\begin{bmatrix}
({\sum}K)^*&O\\({\sum}L)^*&O\end{bmatrix}U^*\\
&=U\begin{bmatrix}X_1&X_2\\X_3&X_4
\end{bmatrix}\begin{bmatrix}
\sum K (\sum K)^*+\sum L(\sum L)^*&O\\O&O\end{bmatrix}U^*\\
&=U\begin{bmatrix}X_1&X_2\\X_3&X_4
\end{bmatrix}\begin{bmatrix}
\sum KK^*\sum^*+\sum LL^*\sum ^*&O\\O&O\end{bmatrix}U^*\\
&=U\begin{bmatrix}X_1&X_2\\X_3&X_4
\end{bmatrix}\begin{bmatrix}
\sum (KK^*+ LL^*)\sum ^*&O\\O&O\end{bmatrix}U^*\\
&=U\begin{bmatrix}X_1&X_2\\X_3&X_4
\end{bmatrix}\begin{bmatrix}
\sum\sum^*&O\\O&O\end{bmatrix}U^*\\
&=U\begin{bmatrix}X_1\sum\sum^*&O\\
X_3\sum\sum^*&O
\end{bmatrix}U^*.
\end{align*}
\end{proof}
Partial isometry matrix is an important class of matrix that plays an important role in the study of contraction \cite{par2, Har2}.
If $A$ is partial isometry, then we have the following result. 
\begin{theorem}
Let $A\in\mathbb{C}^{m\times m}$ be a partial isometry. The following conditions are equivalent: 
\begin{enumerate}[(i)]
    \item $X(A^{\dagger})^*X=X$ and $AX=AA^*$,
    \item There exists $A^{GD}\in A\{GD\}$ such that $X=A^{GD}AA^*+(I_m-A^{GD}A)AA^*$,
    \item $X(A^{\dagger})^*X=X$ and $AX=AA^{\dagger}$.
\end{enumerate}
\end{theorem}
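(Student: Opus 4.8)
The plan is to lean on the two standard facts about a partial isometry $A$: first $A^{\dagger}=A^{*}$, whence $(A^{\dagger})^{*}=A$; and second $AA^{*}=AA^{\dagger}=P_{R(A)}$, an orthogonal (in particular idempotent) projector satisfying $A^{*}AA^{*}=A^{*}$ and $AA^{*}A=A$. The very first observation is that, because $AA^{*}=AA^{\dagger}$ for a partial isometry, conditions (i) and (iii) are literally the same statement; so the whole theorem collapses to the single equivalence (i) $\Leftrightarrow$ (ii), and throughout I would silently rewrite $X(A^{\dagger})^{*}X$ as $XAX$.

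For (ii) $\Rightarrow$ (i) I would just substitute $X=A^{GD}AA^{*}+(I_{m}-A^{GD}A)AA^{*}$ and compute. Using $AA^{GD}A=A$, the second summand contributes $(A-AA^{GD}A)AA^{*}=O$ to $AX$, while the first gives $AA^{GD}AA^{*}=AA^{*}$; hence $AX=AA^{*}$. Then $XAX=X(AX)=XAA^{*}$, and $XAA^{*}=X$ follows from idempotency $AA^{*}AA^{*}=AA^{*}$ (a consequence of $A^{*}AA^{*}=A^{*}$), since $AA^{*}$ appears as the right factor of each summand of $X$. This direction is a routine verification and should go through cleanly.

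The substance lies in the converse (i) $\Rightarrow$ (ii), where I would pass to the Hartwig--Spindelb\"ock decomposition \eqref{dr1}. As $A$ is a partial isometry its nonzero singular values equal $1$, so $\Sigma=I_{r}$ and $A=U\left[\begin{smallmatrix}K&L\\O&O\end{smallmatrix}\right]U^{*}$ with $KK^{*}+LL^{*}=I_{r}$, giving $AA^{*}=U\left[\begin{smallmatrix}I_{r}&O\\O&O\end{smallmatrix}\right]U^{*}$. Writing $X=U\left[\begin{smallmatrix}P&Q\\R&T\end{smallmatrix}\right]U^{*}$, the equation $AX=AA^{*}$ forces $KP+LR=I_{r}$ and $KQ+LT=O$, and $XAX=X$ then forces $Q=O$ and $T=O$; thus (i) is equivalent to $X=U\left[\begin{smallmatrix}P&O\\R&O\end{smallmatrix}\right]U^{*}$ with $KP+LR=I_{r}$. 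To produce the required $A^{GD}=U\left[\begin{smallmatrix}X_{1}&X_{2}\\X_{3}&X_{4}\end{smallmatrix}\right]U^{*}$ satisfying conditions (a)--(d) of Theorem \ref{iit}, I would compute that the formula in (ii) equals $U\left[\begin{smallmatrix}I_{r}+X_{1}(I_{r}-K)&O\\X_{3}(I_{r}-K)&O\end{smallmatrix}\right]U^{*}$ and match it to $X$; this reduces (ii) to solving $X_{1}(I_{r}-K)=P-I_{r}$ and $X_{3}(I_{r}-K)=R$ for blocks of a genuine GD inverse, the blocks $X_{2},X_{4}$ being fixable via (d) since $A\{GD\}\neq\emptyset$.

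The main obstacle is exactly this last matching step. Recall that $K$ has index $k-1$ and that a GD inverse must additionally obey (a) $KX_{1}+LX_{3}=I_{r}$, (b) $X_{1}K^{k}=K^{k-1}$, and (c) $X_{3}K^{k-1}=O$, so the pair $(X_{1},X_{3})$ is far from free. I would have to show that the constrained image of $(X_{1},X_{3})\mapsto\bigl(I_{r}+X_{1}(I_{r}-K),\,X_{3}(I_{r}-K)\bigr)$ covers the full affine family $\{(P,R):KP+LR=I_{r}\}$ dictated by (i), which is precisely where the nilpotent structure of $K$ must be exploited. This is the step I would scrutinize most: a priori the family allowed by (i) looks larger than what the constrained blocks can realize (already at $k=1$ the constraints force $X_{1}=K^{-1}$ and $X_{3}=O$, pinning $X$ down to a single matrix), so before declaring the equivalence proved I would check very carefully whether (i) $\Rightarrow$ (ii) holds verbatim or whether the second summand in (ii) should be restated to restore the missing freedom.
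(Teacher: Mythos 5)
Your reduction of (i) $\Leftrightarrow$ (iii) to a tautology (via $A^{\dagger}=A^{*}$, hence $(A^{\dagger})^{*}=A$ and $AA^{*}=AA^{\dagger}$) and your verification of (ii) $\Rightarrow$ (i) coincide with what the paper actually proves; indeed the paper's entire proof consists of substituting the formula in (ii) into the two equations of (i) and declaring (ii) $\Leftrightarrow$ (iii) ``analogous,'' so the direction (i) $\Rightarrow$ (ii) is never argued there. The Hartwig--Spindelb\"ock analysis is your own addition.

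The gap you flagged in that direction is real: (i) $\Rightarrow$ (ii) is false as stated, so no amount of exploiting the nilpotent structure of $K$ will close it. Concretely, take $A=\begin{bmatrix}1&0\\0&0\end{bmatrix}$, a partial isometry of index $1$ with $A^{\dagger}=A^{*}=A$ and $AA^{*}=A$. Every GD inverse of $A$ has the form $\begin{bmatrix}1&0\\0&a\end{bmatrix}$ (this is the paper's own earlier example), so $A^{GD}A=A$ and the right-hand side of (ii) collapses to $AA^{*}+(I_2-A)A=A$, a single matrix; yet $X=\begin{bmatrix}1&0\\1&0\end{bmatrix}$ satisfies $AX=A=AA^{*}=AA^{\dagger}$ and $X(A^{\dagger})^{*}X=XAX=X$, i.e.\ (i) and (iii) hold while (ii) fails. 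In general, (i) is equivalent to $X=A^{GD}AA^{*}+(I_m-A^{GD}A)ZAA^{*}$ with $Z$ arbitrary: the solutions of $AX=AA^{*}$ are exactly $A^{GD}AA^{*}+(I_m-A^{GD}A)W$, and the equation $XAX=X$ forces the homogeneous part to absorb a right factor $AA^{*}$. The formula in (ii) freezes $Z=I_m$ and therefore describes a strictly smaller family. So the correct move is not to push the block-matching through, but either to record that only (ii) $\Rightarrow$ (i), (iii) is provable, or to restate (ii) with the extra free parameter $Z$, after which your two-sided argument does go through.
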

\begin{proof}
(i)$\iff$(ii) Let $A^{GD}\in A\{GD\}$. Now,
 \begin{align}\label{kumar1}
     AX&=A(A^{GD}AA^*+(I_m-A^{GD}A)AA^*)\nonumber\\
     &=AA^{GD}AA^*+(A-AA^{GD}A)AA^*\nonumber\\
     &=AA^*.
 \end{align}
 \begin{align}
     X(A^{\dagger})^*X&=X(A^{\dagger}AA^{\dagger})^*X\nonumber\\
     &=X(A^{\dagger})^*A^{\dagger}AX\nonumber\\
     &=X(A^{\dagger})^*A^{\dagger}AA^*\nonumber\\
     &=(A^{GD}AA^*+(I_m-A^{GD}A)AA^*)(A^{\dagger})^*A^*\nonumber\\
     &=A^{GD}AA^*+(I_m-A^{GD}A)AA^*\nonumber\\
     &=X.
 \end{align}
 (ii)$\iff$(iii) The proof is analogous to the  above part.
 

\end{proof}

\begin{remark}
\begin{enumerate}
    \item If $A$ is both EP and partial isometry, then $A^{GD,*}=A^{\dagger}$.
    \item If $A$ is partial isometry, then  a GD-star matrix is also an $\{1,3\}$-inverse, so $A^{GD,*}b$ give a least-squares solution of system $Ax=b$.
\end{enumerate}
\end{remark}

We end this section with an application of a GD-star matrix.
 \begin{theorem}
 Let $A\in \mathbb{C}^{m\times m}$ with  $ind(A)=k$ and $b\in\mathbb{C}^{m}$. Then, the equation
 \begin{equation}\label{sys2.13}
     Ax=AA^*b
 \end{equation}
is consistent and its general solution is
$$x=A^{GD,*}b+(I_m-A^{GD}A)z,$$
where $z\in\mathbb{C}^{m\times 1}$ is arbitrary.
 \end{theorem}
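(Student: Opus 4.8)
The plan is to prove consistency first, then verify that the proposed formula solves the system, and finally characterize all solutions. For consistency, I would show that $AA^*b$ lies in the range of $A$. Since $R(AA^*) = R(A)$ (a standard fact, as $AA^*$ and $A$ have the same range), the vector $AA^*b \in R(A)$, so the system $Ax = AA^*b$ is consistent. Alternatively, and more in keeping with the machinery of this paper, I would exhibit a particular solution directly and thereby establish consistency constructively.

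The key step is to verify that $x_0 = A^{GD,*}b$ is a particular solution, i.e. that $A\,A^{GD,*}b = AA^*b$. Using Definition \ref{sa1}, $A\,A^{GD,*} = A\,A^{GD}AA^*$, and since $AA^{GD}A = A$ (from the defining equation $AXA=A$ for a GD inverse), this collapses to $AA^*$. Hence $A\,A^{GD,*}b = AA^*b$, confirming both that $x_0$ solves the equation and that the system is consistent. This is essentially part (i) of Lemma \ref{sa3} applied with the factor $(A^{\dagger})^*$ removed, or can be read off from the identity $AX = AA^*$ established in part (xi) of that lemma.

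For the general solution, I would invoke the standard description of the solution set of a consistent linear system: every solution has the form (particular solution) plus an element of the null space $N(A)$. The remaining task is to identify $N(A)$ with the column space of $I_m - A^{GD}A$. Since $A(I_m - A^{GD}A) = A - AA^{GD}A = A - A = O$, we have $R(I_m - A^{GD}A) \subseteq N(A)$; conversely, if $Av = O$ then $v = (I_m - A^{GD}A)v$ because $A^{GD}Av = A^{GD}(Av) = O$, so $v \in R(I_m - A^{GD}A)$. Thus $N(A) = R(I_m - A^{GD}A)$, and the general solution is $x = A^{GD,*}b + (I_m - A^{GD}A)z$ with $z \in \mathbb{C}^{m\times 1}$ arbitrary.

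I do not anticipate a serious obstacle here; the proof is a routine application of $AA^{GD}A = A$ together with the standard structure theorem for solutions of consistent linear systems. The only mild subtlety is the characterization $N(A) = R(I_m - A^{GD}A)$, which must use a genuine GD inverse (so that $A^{GD}A$ acts as identity on $R(A^{GD})$ and annihilates the correct complement); I would make sure to write out both inclusions rather than asserting it, since $A^{GD}A$ is an idempotent that projects along $N(A)$ precisely because $AA^{GD}A = A$.
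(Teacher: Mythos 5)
Your proposal is correct and follows essentially the same route as the paper: exhibit $A^{GD,*}b$ as a particular solution via $AA^{GD}A=A$, then parametrize the remaining solutions by $(I_m-A^{GD}A)z$. You are somewhat more explicit than the paper in verifying both inclusions of $N(A)=R(I_m-A^{GD}A)$ (the paper just notes that any solution $x$ satisfies $x=A^{GD,*}b+(I_m-A^{GD}A)x$), but this is the same argument written out in full rather than a different approach.
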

\begin{proof}
Let $x=A^{GD}AA^*b$. Then, 
\begin{align*}
    Ax&=AA^{GD}AA^*b\nonumber\\
    &=AA^*b.
\end{align*}
Hence, the system \eqref{sys2.13} is consistence. Now,
$Ax=AA^{GD,*}b+A(I_m-A^{GD}A)z=AA^*b$. Also, if $x$ is a solution of \eqref{sys2.13}, then $x=A^{GD,*}b+(I_m-A^{GD}A)x$.  So, the general solution of the system \eqref{sys2.13} is given by $x=A^{GD,*}b+(I_m-A^{GD}A)z$. 
\end{proof}

The proof of the spectral result for a GD-star matrix is similar as the proof of Theorem \ref{spe}. So, we skip the proof.
\begin{theorem}
 Let $A\in \mathbb{C}^{m\times m}$ be a non-zero matrix and let $AA^*=\displaystyle\sum_{i=1}^r\alpha_iE_i$ be the spectral representation of $AA^*$, where $\{\alpha_1,\alpha_2,...,\alpha_r\}$ is the set of
distinct eigenvalues of $AA^*$ and $E_i$ are the corresponding self-adjoint spectral projections. Then, we have
$$A^{GD,*}=\displaystyle\sum_{i=1}^r\alpha_iA^{GD}E_i.$$
\end{theorem}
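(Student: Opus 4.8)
The plan is to mimic the structure of the proof of Theorem~\ref{spe}, using the spectral decomposition of $AA^*$ to rewrite $A^{GD,*}$. By definition, $A^{GD,*}=A^{GD}AA^*$, so I would first substitute the given spectral representation $AA^*=\sum_{i=1}^r\alpha_iE_i$ directly into this expression. This yields
\begin{align*}
A^{GD,*}=A^{GD}(AA^*)=A^{GD}\left(\sum_{i=1}^r\alpha_iE_i\right)=\sum_{i=1}^r\alpha_iA^{GD}E_i,
\end{align*}
by linearity. This is in fact essentially immediate once the definition is unwound, so the entire content of the statement is just the substitution of the spectral sum for $AA^*$ and pulling the left factor $A^{GD}$ through the finite sum.

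The only subtlety worth flagging is that, unlike Theorem~\ref{spe} where the authors also prove the sum \emph{collapses} to a single closed form ($A^{GD,\dagger}=A^{GD}$), here no such collapse is claimed: the result simply expresses $A^{GD,*}$ as the weighted sum $\sum_{i=1}^r\alpha_iA^{GD}E_i$. So I would not attempt any further simplification. The factor $A^{GD}$ does not in general commute with the projections $E_i$, and there is no reason for the weighted combination to telescope, so the stated form is the natural stopping point.

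The main (and really only) step is the substitution, which requires nothing beyond the definition of $A^{GD,*}$ in Definition~\ref{sa1} and the hypothesised spectral representation of $AA^*$. I do not anticipate any genuine obstacle; the authors themselves note the proof is analogous to that of Theorem~\ref{spe} and omit it. If anything, the one point to keep straight is that $E_i$ are the self-adjoint spectral projections of $AA^*$ (not of $A$), so that $\sum_{i=1}^r\alpha_iE_i$ genuinely reconstructs $AA^*$ and the displayed identity holds for whichever fixed choice of $A^{GD}\in A\{GD\}$ one uses to form the GD-star matrix.
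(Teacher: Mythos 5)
Your proof is correct and is exactly the argument the paper intends (the paper omits the proof, noting only that it parallels Theorem \ref{spe}): substituting the spectral representation $AA^*=\sum_{i=1}^r\alpha_iE_i$ into $A^{GD,*}=A^{GD}AA^*$ and distributing $A^{GD}$ over the finite sum gives the claim immediately. Your remark that no further collapse occurs here, unlike in Theorem \ref{spe}, is an accurate observation.
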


Let us denote $A^{GD,*}\in A\{GD\}AA^*$. We now obtain sufficient conditions so that $(AB)^{GD,*}=B^{GD,*}A^{GD,*}$.

\begin{theorem}
Let $A,B\in\mathbb{C}^{m \times m}$ be such that $AB=BA$ and $max\{ind(A),ind(B)\}=k$. If $R(A^{GD})\subseteq R(B)$ and $A^{GD}ABB^{*}=BB^{*}A^{GD}A$,  then $(AB)^{GD,*}=B^{GD,*}A^{GD,*}$.
\end{theorem}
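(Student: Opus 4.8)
The plan is to reduce the statement to the reverse-order law for the GD inverse already established in Theorem \ref{rev}, and then to push the commutativity hypothesis through a short direct computation. The hypotheses $AB=BA$, $\max\{ind(A),ind(B)\}=k$, and $R(A^{GD})\subseteq R(B)$ are precisely those of Theorem \ref{rev}, so I would first invoke that result to obtain $(AB)^{GD}=B^{GD}A^{GD}$. This fixes a particular GD inverse of $AB$, and it is with respect to this choice that the GD-star matrix $(AB)^{GD,*}$ is to be formed.

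With this in hand, I would expand the left-hand side directly from Definition \ref{sa1}. Writing $(AB)^*=B^*A^*$ and substituting the reverse-order law gives
$$(AB)^{GD,*}=(AB)^{GD}(AB)(AB)^*=B^{GD}A^{GD}ABB^*A^*.$$
The single decisive step is to regroup the central factors as $A^{GD}A\cdot BB^*$ and apply the commutativity hypothesis $A^{GD}ABB^*=BB^*A^{GD}A$. This yields
$$B^{GD}A^{GD}ABB^*A^*=B^{GD}(BB^*A^{GD}A)A^*=(B^{GD}BB^*)(A^{GD}AA^*)=B^{GD,*}A^{GD,*},$$
which is exactly the right-hand side.

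The computation is short, so there is no serious analytic obstacle; the care needed is purely in the bookkeeping of which generalized inverses appear. Specifically, I must confirm that the $A^{GD}$ and $B^{GD}$ used to form $A^{GD,*}$ and $B^{GD,*}$ are the same ones whose product realizes $(AB)^{GD}$ through Theorem \ref{rev}. Since that theorem constructs $(AB)^{GD}$ as exactly $B^{GD}A^{GD}$ for the given $A^{GD}$ and a compatible $B^{GD}$, this consistency holds automatically. The only substantive use of the hypothesis $A^{GD}ABB^*=BB^*A^{GD}A$ is the single regrouping step displayed above, and I would flag that step as the heart of the argument.
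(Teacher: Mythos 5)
Your proposal is correct and follows essentially the same route as the paper's own proof: invoke Theorem \ref{rev} to write $(AB)^{GD}=B^{GD}A^{GD}$, expand $(AB)^{GD,*}=B^{GD}A^{GD}ABB^*A^*$, and use the hypothesis $A^{GD}ABB^*=BB^*A^{GD}A$ to regroup into $B^{GD,*}A^{GD,*}$. Your additional remark about keeping the choices of $A^{GD}$ and $B^{GD}$ consistent is a sensible clarification that the paper leaves implicit.
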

\begin{proof} From $A^{GD}ABB^{*}=BB^{*}A^{GD}A$ and Theorem \ref{rev}, we get
\begin{align*}
    (AB)^{GD,*}&=(AB)^{GD}AB(AB)^*\\
    &=B^{GD}A^{GD}ABB^*A^*\\
    &=B^{GD}BB^*A^{GD}AA^*\\
    &=B^{GD,*}A^{GD,*}.
\end{align*}
\end{proof}
A few sufficient conditions for the forward-order law are provided below. 
\begin{theorem}
Let $A,B\in\mathbb{C}^{m \times m}$ be such that $AB=BA$ and $max\{ind(A),ind(B)\}=k$. If $R(B^{GD})\subseteq R(A)$ and $B^{GD}BAA^{*}=AA^*B^{GD}B$, then $(AB)^{GD,*}=A^{GD,*}B^{GD,*}$.
\end{theorem}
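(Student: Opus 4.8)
The plan is to mirror the proof of the reverse-order law theorem that immediately precedes this one, since the two statements are structurally identical after swapping the roles of $A$ and $B$. First I would invoke Theorem~\ref{for}: the hypotheses $AB=BA$, $\max\{ind(A),ind(B)\}=k$, and $R(B^{GD})\subseteq R(A)$ are precisely the conditions of that theorem, which yields the forward-order law for the bare GD inverse, namely $(AB)^{GD}=A^{GD}B^{GD}$. This is the load-bearing input, and it lets me replace $(AB)^{GD}$ by $A^{GD}B^{GD}$ at the very first step.

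Next I would expand the left-hand side directly from the definition of a GD-star matrix. Writing
\begin{align*}
(AB)^{GD,*}&=(AB)^{GD}AB(AB)^*\\
&=A^{GD}B^{GD}AB\,B^*A^*,
\end{align*}
where the second line uses $(AB)^{GD}=A^{GD}B^{GD}$ together with $(AB)^*=B^*A^*$ and the commutativity $AB=BA$ to arrange the middle factors. The goal is then to rearrange $A^{GD}B^{GD}ABB^*A^*$ into the factored form $A^{GD}AA^*\cdot B^{GD}BB^*$, which is exactly $A^{GD,*}B^{GD,*}$ by Definition~\ref{sa1}.

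The single substantive step is the commutation hypothesis $B^{GD}BAA^*=AA^*B^{GD}B$. I would use it to move the block $B^{GD}B$ past the block $AA^*$, which is what converts the intertwined product into the cleanly separated product of the two GD-star matrices; commutativity of $A$ and $B$ handles the remaining reshuffling of $A$, $B$, $A^*$, $B^*$. Concretely the chain runs
\begin{align*}
A^{GD}B^{GD}ABB^*A^*&=A^{GD}AA^*B^{GD}BB^*\\
&=A^{GD,*}B^{GD,*},
\end{align*}
so the whole argument is a short computation once Theorem~\ref{for} is in hand.

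I do not anticipate a genuine obstacle here, since both nontrivial ingredients are handed to me: the factorization of $(AB)^{GD}$ comes for free from Theorem~\ref{for}, and the reordering is exactly the content of the stated commutation identity. The only point demanding care is bookkeeping—making sure each application of $AB=BA$, of the conjugate-transpose identity $(AB)^*=B^*A^*$, and of the hypothesis $B^{GD}BAA^*=AA^*B^{GD}B$ is applied to the correct adjacent factors so that the product collapses to $A^{GD}AA^*B^{GD}BB^*$ without introducing extraneous terms.
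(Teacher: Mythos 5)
Your proposal is correct and follows essentially the same route as the paper: both invoke Theorem~\ref{for} to get $(AB)^{GD}=A^{GD}B^{GD}$, expand $(AB)^{GD,*}=(AB)^{GD}AB(AB)^*$, and then use $AB=BA$ (hence $B^*A^*=A^*B^*$) together with the hypothesis $B^{GD}BAA^*=AA^*B^{GD}B$ to collapse the product to $A^{GD}AA^*\,B^{GD}BB^*=A^{GD,*}B^{GD,*}$. No gaps.
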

\begin{proof} From $B^{GD}BAA^{*}=AA^*B^{GD}B$ and Theorem \ref{for}, we have
\begin{align*}
    (AB)^{GD,*}&=(AB)^{GD}AB(AB)^*\\
    &=A^{GD}B^{GD}BAB^*A^*\\
    &=A^{GD}B^{GD}BAA^*B^*\\
    &=A^{GD}AA^*B^{GD}BB^*\\
    &=A^{GD,*}B^{GD,*}.
\end{align*}
\end{proof}

Kumar {\it et al.} \cite{k11} provided the additive properties of a GD inverse.
\begin{theorem}\label{theorem5.6}(\cite{k11})
Let $A,B\in\mathbb{C}^{m\times m}$ with $AB=BA=O$ and $max\{ind(A), ind(B)\}=k.$ If $A^{GD}B=BA^{GD}=O$ and $B^{GD}A=AB^{GD}=O$, then $(A+B)^{GD}=A^{GD}+B^{GD}.$ 
\end{theorem}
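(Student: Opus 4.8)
The plan is to set $C=A+B$ and $X=A^{GD}+B^{GD}$ and to verify directly that $X$ satisfies the three defining equations of a GD inverse of $C$, namely $CXC=C$, $XC^{k+1}=C^k$ and $C^{k+1}X=C^k$. The whole argument rests on two preliminary reductions that make every ``cross term'' collapse. First, since $AB=BA=O$, a trivial induction gives $(A+B)^n=A^n+B^n$ for every $n\geq 1$; in particular $C^k=A^k+B^k$ and $C^{k+1}=A^{k+1}+B^{k+1}$. Second, because $k=\max\{ind(A),ind(B)\}$ is at least the index of each of $A$ and $B$, the GD inverses $A^{GD}$ and $B^{GD}$ satisfy their defining equations with the \emph{common} exponent $k$ rather than only with their individual indices; writing $ind(A)=k_A\leq k$ one has $A^{GD}A^{k+1}=A^{GD}A^{k_A+1}A^{k-k_A}=A^{k_A}A^{k-k_A}=A^k$, and likewise $A^{k+1}A^{GD}=A^k$ together with $AA^{GD}A=A$, and analogously for $B$.

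With these in hand I would first check $CXC=C$: expanding $(A+B)(A^{GD}+B^{GD})$ and using $AB^{GD}=O$ and $BA^{GD}=O$ reduces it to $AA^{GD}+BB^{GD}$, and multiplying on the right by $A+B$ while using $A^{GD}B=O$, $B^{GD}A=O$ together with $AA^{GD}A=A$, $BB^{GD}B=B$ leaves exactly $A+B=C$. For $XC^{k+1}$ I would expand $(A^{GD}+B^{GD})(A^{k+1}+B^{k+1})$; the mixed terms vanish because $A^{GD}B^{k+1}=(A^{GD}B)B^k=O$ and $B^{GD}A^{k+1}=(B^{GD}A)A^k=O$, while the diagonal terms give $A^{GD}A^{k+1}+B^{GD}B^{k+1}=A^k+B^k=C^k$. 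The identity $C^{k+1}X=C^k$ is symmetric: expanding $(A^{k+1}+B^{k+1})(A^{GD}+B^{GD})$, the mixed terms die since $A^{k+1}B^{GD}=A^k(AB^{GD})=O$ and $B^{k+1}A^{GD}=B^k(BA^{GD})=O$, and the diagonal terms again give $A^k+B^k=C^k$.

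Having verified all three equations, I would conclude by the definition of a GD inverse (equivalently, by Corollary \ref{co2.1}) that $X=A^{GD}+B^{GD}\in (A+B)\{GD\}$, that is, $(A+B)^{GD}=A^{GD}+B^{GD}$. The computation itself is entirely mechanical once the cross terms are eliminated, so I do not expect any deep obstacle; the only point deserving care is the index bookkeeping flagged above. One must read all the defining equations with the common exponent $k=\max\{ind(A),ind(B)\}$ and check that this is legitimate for $A+B$: from $(A+B)^n=A^n+B^n$ one gets $R(C^k)=R(A^k)+R(B^k)=R(C^{k+1})$ (using $C^kA^k=A^{2k}$ and $C^kB^k=B^{2k}$ since $A^kB^k=B^kA^k=O$), hence $ind(A+B)\leq k$, and the canonical form \eqref{GD2} shows that an $X$ satisfying the three equations with an exponent at least $ind(A+B)$ is genuinely a GD inverse. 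It is also worth noting that all four hypotheses are genuinely used: $A^{GD}B=O$ and $B^{GD}A=O$ clear the cross terms in $CXC$ and $XC^{k+1}$, whereas $AB^{GD}=O$ and $BA^{GD}=O$ clear them in $CXC$ and $C^{k+1}X$.
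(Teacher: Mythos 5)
Your proof is correct. Note that the paper itself gives no proof of this statement---it is quoted from reference \cite{k11}---so there is nothing internal to compare against; your direct verification of the three defining equations $CXC=C$, $XC^{k+1}=C^k$, $C^{k+1}X=C^k$ for $C=A+B$, $X=A^{GD}+B^{GD}$ is exactly the style the paper uses for its analogous results (e.g.\ the reverse-order law, Theorem \ref{rev}). In fact you are more careful than the paper is in those places: you correctly flag and resolve the index bookkeeping, namely that the equations are being checked with the exponent $k=\max\{ind(A),ind(B)\}$ rather than $ind(A+B)$, that $ind(A+B)\leq k$ follows from $R(C^k)=R(A^k)+R(B^k)=R(C^{k+1})$, and that satisfying the three equations with any exponent at least $ind(C)$ forces the canonical form \eqref{GD2} (a short block computation one could spell out, but a standard fact). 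All four hypotheses on the vanishing products are used where you say they are, and the cross-term cancellations are all valid.
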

With the help of previous result, we describe the additive properties for a GD-star matrix. 
\begin{theorem}\label{rm5.6}
Let $A,B\in\mathbb{C}^{m\times m}$ with $AB=BA=BA^*=O$ and $max\{ind(A), ind(B)\}=k.$ If $A^{GD}B=BA^{GD}=O$ and $B^{GD}A=AB^{GD}=O$, then $(A+B)^{GD,*}=A^{GD,*}+B^{GD,*}.$ 
\end{theorem}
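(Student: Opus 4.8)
The plan is to reduce everything to the already-established additive formula for the GD inverse (Theorem \ref{theorem5.6}) and then carry out a direct expansion, using the extra hypothesis $BA^*=O$ to annihilate the cross terms contributed by the star factor. First I would check that all hypotheses of Theorem \ref{theorem5.6} are in force: we are given $AB=BA=O$, $A^{GD}B=BA^{GD}=O$ and $B^{GD}A=AB^{GD}=O$, so that theorem applies and yields $(A+B)^{GD}=A^{GD}+B^{GD}$. Substituting this into the definition $A^{GD,*}=A^{GD}AA^*$ gives
\begin{align*}
(A+B)^{GD,*}=(A+B)^{GD}(A+B)(A+B)^*=(A^{GD}+B^{GD})(A+B)(A^*+B^*).
\end{align*}

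Next I would simplify the middle factor before touching the star part. Expanding $(A^{GD}+B^{GD})(A+B)=A^{GD}A+A^{GD}B+B^{GD}A+B^{GD}B$ and discarding $A^{GD}B=O$ and $B^{GD}A=O$ leaves $A^{GD}A+B^{GD}B$. Multiplying this by $(A^*+B^*)$ produces the four terms
\begin{align*}
A^{GD}AA^*+A^{GD}AB^*+B^{GD}BA^*+B^{GD}BB^*,
\end{align*}
whose first and last are exactly $A^{GD,*}$ and $B^{GD,*}$, so the whole result hinges on the vanishing of the two cross terms.

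The crux is therefore to kill $A^{GD}AB^*$ and $B^{GD}BA^*$. The second dies immediately, since $BA^*=O$ is a hypothesis, whence $B^{GD}BA^*=B^{GD}(BA^*)=O$. For the first term the only mildly nonobvious step — and the one I would flag as the key observation — is that $BA^*=O$ also forces $AB^*=O$: taking conjugate transposes gives $(BA^*)^*=AB^*=O$, and hence $A^{GD}AB^*=A^{GD}(AB^*)=O$ as well. Collecting the two surviving terms then yields $(A+B)^{GD,*}=A^{GD}AA^*+B^{GD}BB^*=A^{GD,*}+B^{GD,*}$, as claimed. I do not expect any genuine obstacle beyond recognizing that the single extra hypothesis $BA^*=O$ simultaneously annihilates both cross terms once it is transposed, so that the problem collapses to the GD-inverse additivity already proved in Theorem \ref{theorem5.6}.
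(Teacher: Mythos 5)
Your proposal is correct and follows essentially the same route as the paper: invoke Theorem \ref{theorem5.6} to get $(A+B)^{GD}=A^{GD}+B^{GD}$, expand the product, kill four terms via $A^{GD}B=B^{GD}A=O$, and kill the remaining cross terms $A^{GD}AB^*$ and $B^{GD}BA^*$ using $BA^*=O$ and its conjugate transpose $AB^*=O$. The only difference is cosmetic (you associate the product as $((A^{GD}+B^{GD})(A+B))(A^*+B^*)$ rather than expanding $(A+B)(A+B)^*$ first), and you actually spell out the transpose step that the paper leaves implicit.
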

\begin{proof} Using the expression of a GD-star matrix.
\begin{align*}
    (A+B)^{GD}(A+B)(A+B)^*&=(A^{GD}+B^{GD})(AA^*+AB^*+BA^*+BB^*)\\
    &=A^{GD}AA^*+A^{GD}AB^*+A^{GD}BA^*+A^{GD}BB^*+B^{GD}AA^*\\
    &+B^{GD}AB^*+B^{GD}BA^*+B^{GD}BB^*\\
    &=A^{GD,*}+A^{GD}AB^*+B^{GD}BA^*+B^{GD,*}\\
        &=A^{GD,*}+B^{GD,*}.~~~~ 
\end{align*}
\end{proof}

\begin{theorem}\label{rm5.6}
Let $A,B\in\mathbb{C}^{m\times m}$ with $max\{ind(A), ind(B)\}=k.$ If $A^{GD,*}((A^*)^{\dagger}+(B^{*})^{\dagger})B^{GD,*}=A^{GD,*}+B^{GD,*},$ then  
\begin{enumerate}[(i)]
  \item $AA^*BB^{\dagger}=AA^*$.
    \item 
    $A^{GD}AB^{GD}B=B^{GD}B$. 
\end{enumerate}
\end{theorem}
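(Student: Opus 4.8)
The plan is to start from the hypothesis $A^{GD,*}\bigl((A^*)^{\dagger}+(B^*)^{\dagger}\bigr)B^{GD,*}=A^{GD,*}+B^{GD,*}$ and expand the left-hand side using the definition $A^{GD,*}=A^{GD}AA^*$ together with the identity $A^*(A^*)^{\dagger}=(A^{\dagger}A)^*=A^{\dagger}A$ (and analogously for $B$). First I would distribute the middle factor to write the left side as a sum of two terms,
\begin{align*}
A^{GD,*}(A^*)^{\dagger}B^{GD,*}+A^{GD,*}(B^*)^{\dagger}B^{GD,*}
&=A^{GD}AA^*(A^*)^{\dagger}B^{GD}BB^*+A^{GD}AA^*(B^*)^{\dagger}B^{GD}BB^*.
\end{align*}
Using $A^*(A^*)^{\dagger}=A^{\dagger}A$ and the GD-inverse relation $A^{GD}AA^{\dagger}A=A^{GD}A$ (since $AA^{\dagger}A=A$), the first term simplifies, and a symmetric manipulation applies to the second term with the roles of $A$ and $B$. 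The aim is to reduce the matrix equation to a transparent relation that, upon comparing with the required right-hand side $A^{GD,*}+B^{GD,*}=A^{GD}AA^*+B^{GD}BB^*$, forces the two stated conclusions.

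For part (i), $AA^*BB^{\dagger}=AA^*$, I would premultiply or postmultiply the simplified identity by a suitably chosen factor (most naturally $A$ on the left and $B^{\dagger}$-related terms on the right) so that the $B^{GD}B$ block gets converted into the orthogonal projector $BB^{\dagger}$ via $BB^*(B^*)^{\dagger}=BB^{\dagger}B\cdot(\text{something})$; concretely, one uses $B^*(B^*)^{\dagger}=B^{\dagger}B$ and $BB^{\dagger}B=B$. The key identity I expect to exploit repeatedly is $A^{GD}A\,A^{\dagger}A=A^{GD}A$ and the fact that $AA^{GD}A=A$, which let me collapse products like $AA^{GD}AA^*$ back to $AA^*$. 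For part (ii), $A^{GD}AB^{GD}B=B^{GD}B$, the strategy is to isolate the cross-term contributions in the expanded identity and cancel the ``diagonal'' pieces $A^{GD,*}$ and $B^{GD,*}$ against the right-hand side, leaving an equation in the projectors $A^{GD}A$ and $B^{GD}B$; then premultiplying by an appropriate Moore--Penrose factor removes the $AA^*$ weighting and yields the projector identity.

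The main obstacle I anticipate is that $A^{GD}A$ and $B^{GD}B$ are oblique (idempotent but not in general Hermitian) projectors, whereas $AA^{\dagger}$, $BB^{\dagger}$, $A^{\dagger}A$, $B^{\dagger}B$ are orthogonal projectors, so the simplifications do not commute freely; I must be careful about the order of factors and about which projector identity ($A^{GD}AA^*=\,?$) is actually licensed. In particular, the step that converts a term of the form $A^{GD}AA^*(A^*)^{\dagger}$ into $A^{GD}A$ relies on $A^*(A^*)^{\dagger}=A^{\dagger}A$ followed by $A^{GD}AA^{\dagger}A=A^{GD}A$, and the analogous move for $B$ must be tracked separately. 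I would therefore carry out the expansion term by term, record each of the four products explicitly, and only then match against $A^{GD,*}+B^{GD,*}$; the two desired conclusions (i) and (ii) should emerge by postmultiplying the resulting identity by $B^{\dagger}$ and by $B$ (respectively) and invoking $BB^{\dagger}B=B$ and the idempotence of $B^{GD}B$.
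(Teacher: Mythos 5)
Your overall strategy---expand the hypothesis using $A^{GD,*}=A^{GD}AA^*$, exploit $A^*(A^*)^{\dagger}=A^{\dagger}A$, $AA^{\dagger}A=A$ and $BB^{GD}B=B$, and then hit the resulting identity with suitable one-sided factors---is exactly the paper's strategy, so there is no disagreement about the route. The problem is that you never carry the computation out, and the concrete multipliers you finally commit to are not the ones that make the argument close. For (i), the conclusion $AA^*BB^{\dagger}=AA^*$ is obtained by pre-multiplying the hypothesis by $A$ alone: $AA^{GD,*}=AA^*$, the term $AA^*(A^*)^{\dagger}B^{GD,*}$ collapses to $AB^{GD,*}$ and cancels against the right-hand side, and the surviving cross term $AA^*(B^*)^{\dagger}B^{GD,*}$ equals $AA^*BB^{\dagger}$ because $(B^{\dagger})^*B^{GD}BB^*=BB^{\dagger}$ (this is Lemma \ref{sa3}(xiii) applied to $B$; it needs $(B^{\dagger})^*=(B^{\dagger})^*B^{\dagger}B$ together with $B^{\dagger}BB^{GD}B=B^{\dagger}B$, not merely $BB^{\dagger}B=B$). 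No right factor is involved, and post-multiplying by $B^{\dagger}$ only produces the non-simplifiable block $B^*B^{\dagger}$.

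For (ii), your proposal to post-multiply by $B$ fails outright: $B^{GD,*}B=B^{GD}BB^*B$, which does not reduce to $B^{GD}B$, so you cannot reach $A^{GD}AB^{GD}B=B^{GD}B$ this way. The factor that works is $(B^*)^{\dagger}=(B^{\dagger})^*$: then $B^{GD,*}(B^*)^{\dagger}=B^{GD}BB^{\dagger}B=B^{GD}B$, the term $A^{GD,*}(A^*)^{\dagger}B^{GD}B$ becomes $A^{GD}A\,B^{GD}B$ with no extra pre-multiplication, and the remaining cross term $A^{GD,*}(B^{\dagger})^*B^{GD}B$ equals $A^{GD,*}(B^{\dagger})^*$ (again via $(B^{\dagger})^*=(B^{\dagger})^*B^{\dagger}B$ and $BB^{GD}B=B$), which cancels with $A^{GD,*}(B^*)^{\dagger}$ on the right. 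So the missing content is precisely the choice of the correct one-sided factors ($A$ on the left for (i), $(B^*)^{\dagger}$ on the right for (ii)) and the two collapsing identities $(B^{\dagger})^*B^{GD,*}=BB^{\dagger}$ and $B^{GD,*}(B^*)^{\dagger}=B^{GD}B$; without them your plan does not produce either conclusion.
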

\begin{proof}
    \begin{itemize}[(i)]
        \item  We have 
        \begin{align}\label{abs1}
            A^{GD,*}((A^*)^{\dagger}+(B^{*})^{\dagger})B^{GD,*}=A^{GD,*}+B^{GD,*}.
        \end{align}
        Pre-multiplying \eqref{abs1} by $A$, we get $AA^{GD,*}((A^*)^{\dagger}+(B^{*})^{\dagger})B^{GD,*}=AA^{GD,*}+AB^{GD,*}$, i.e., $AA^*((A^*)^{\dagger}+(B^{*})^{\dagger})B^{GD,*}=AA^*+AB^{GD,*}$, i.e, $AA^*(A^*)^{\dagger}B^{GD,*}+AA^*(B^{*})^{\dagger}B^{GD,*}=AA^*+AB^{GD,*}$, i.e., $AB^{GD,*}+AA^*(B^*)^{\dagger}B^{\dagger}BB^{GD}BB^*=AA^*+AB^{GD,*}$, i.e., $AA^*BB^{\dagger}=AA^*$. Hence, $AA^*BB^{\dagger}=AA^*$.
        \item[(ii)] 
        Now, post-multiplying \eqref{abs1} by $(B^*)^{\dagger}$, we obtain $A^{GD,*}((A^*)^{\dagger}+(B^{*})^{\dagger})B^{GD,*}(B^*)^{\dagger}=A^{GD,*}(B^*)^{\dagger}+B^{GD,*}(B^*)^{\dagger}$, i.e., $A^{GD,*}((A^*)^{\dagger}+(B^{*})^{\dagger})B^{GD}B=A^{GD,*}(B^*)^{\dagger}+B^{GD}B$, i.e., $A^{GD}AB^{GD}B+A^{GD,*}(B^{\dagger})^*B^{\dagger}BB^{GD}B=A^{GD,*}(B^*)^{\dagger}+B^{GD}B$. Using $BB^{GD}B=B$, we get $A^{GD}AB^{GD}B+A^{GD,*}(B^{\dagger})^*B^{\dagger}B=A^{GD,*}(B^*)^{\dagger}+B^{GD}B$, i.e., $A^{GD}AB^{GD}B+A^{GD,*}(B^*)^{\dagger}=A^{GD,*}(B^*)^{\dagger}+B^{GD}B$, i.e.,  $A^{GD}AB^{GD}B=B^{GD}B$.

    \end{itemize}
\end{proof}

\subsection{GD-star order}\label{sec4}
This section presents a binary relation called a GD-star order and its properties. We will start with the definition of a GD-star order.
\begin{definition}
 Let $A, B \in\mathbb{C}^{m\times m}$. Then, $A$ is said to be below $B$ under a GD-star order if there exists a GD-star inverse $A^{GD,*}$ of 
 $A$ such that
 $$A^{GD,*}A=A^{GD,*}B, \text{ and } AA^{GD,*}=BA^{GD,*}.$$
 It is denoted by $A\leq_{GD}^*B$.
\end{definition}
Next, we describe a characterization of a GD-star order.
\begin{theorem}
Let $A\in\mathbb{C}^{m\times m}$ be such that the matrix $P$ is a unitary matrix in \eqref{GD1}. Then,  $A\leq_{GD}^*B$ if and only if 
$$B=P\begin{bmatrix}C^*&O\\O&B_4
\end{bmatrix}P^{*},$$
where $B_4N^{-}N=N$ and $N^*B_4=N^*N$.
\end{theorem}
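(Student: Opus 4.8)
The plan is to establish both directions of the equivalence by computing everything in the coordinate system provided by the decomposition \eqref{GD1}, namely $A=P\begin{bmatrix}C&O\\O&N\end{bmatrix}P^{-1}=P\begin{bmatrix}C&O\\O&N\end{bmatrix}P^{*}$ (since $P$ is unitary here), together with the associated GD inverse $A^{GD}=P\begin{bmatrix}C^{-1}&O\\O&N^{-}\end{bmatrix}P^{*}$ from \eqref{GD2} and the GD-star matrix computed in the corollary to Theorem \ref{thm2.8}, namely $A^{GD,*}=P\begin{bmatrix}C^{*}&O\\O&N^{-}NN^{*}\end{bmatrix}P^{*}$. The strategy is to write the unknown $B$ in block form relative to the same unitary $P$, say $B=P\begin{bmatrix}B_1&B_2\\B_3&B_4\end{bmatrix}P^{*}$, substitute into the two defining equations of the GD-star order, and read off the block constraints.

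First I would set $X:=A^{GD,*}$ and write out $X A = X B$ and $A X = B X$ blockwise. Using $XA=P\begin{bmatrix}C^{*}C&O\\O&N^{-}NN^{*}N\end{bmatrix}P^{*}$ and $AX=P\begin{bmatrix}CC^{*}&O\\O&NN^{-}NN^{*}\end{bmatrix}P^{*}$, and the corresponding products $XB$ and $BX$, I would equate the four blocks in each matrix equation. The top-left block comparisons should force $B_1=C^{*}$ (here the invertibility of $C$ and of $C^{*}$ is what lets me cancel and conclude $B_1=C^{*}$), while the off-diagonal blocks involving $C^{*}$ together with invertibility of $C$ should force $B_2=O$ and $B_3=O$. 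The remaining bottom-right relations collapse to $N^{-}NN^{*}N=N^{-}NN^{*}B_4$ and $NN^{-}NN^{*}=B_4 N^{-}NN^{*}$, which I expect to simplify to the stated conditions $B_4 N^{-}N=N$ and $N^{*}B_4=N^{*}N$ after using $NN^{-}N=N$ and the Moore--Penrose-type cancellations; the appearance of $N^{*}$ on the right is exactly why the second condition is phrased with $N^{*}$ rather than $N$.

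For the converse I would simply assume $B$ has the stated form with $B_4 N^{-}N=N$ and $N^{*}B_4=N^{*}N$, form $A^{GD,*}A$, $A^{GD,*}B$, $AA^{GD,*}$, $BA^{GD,*}$ directly, and verify the two order equations hold by substituting these two hypotheses at the one place each is needed. This direction is mechanical once the forward direction has identified which products of $N$, $N^{-}$, $N^{*}$, $B_4$ appear.

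The main obstacle I anticipate is the bottom-right block, because $N^{-}$ is only an inner inverse (not Moore--Penrose), so I cannot freely use $NN^{-}=N^{-}N$ or treat $NN^{-}$ as a selfadjoint projector; I will have to be careful to apply only the identities that genuinely hold, namely $NN^{-}N=N$, and to track how $N^{*}$ interacts with $N^{-}NN^{*}$. In particular, recovering the clean pair $\{B_4 N^{-}N=N,\ N^{*}B_4=N^{*}N\}$ from the raw block equations is the step most likely to require a delicate manipulation rather than a one-line cancellation, and I would check it carefully (and, if needed, verify that these two conditions are also \emph{sufficient} to recover the raw equations, so the equivalence is tight).
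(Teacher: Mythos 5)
Your approach is exactly the paper's: write $B=P\begin{bmatrix}B_1&B_2\\B_3&B_4\end{bmatrix}P^{*}$, use $A^{GD,*}=P\begin{bmatrix}C^{*}&O\\O&N^{-}NN^{*}\end{bmatrix}P^{*}$, equate blocks in $A^{GD,*}A=A^{GD,*}B$ and $AA^{GD,*}=BA^{GD,*}$, and then clean up the bottom-right relations by pre-multiplying $N^{-}NN^{*}N=N^{-}NN^{*}B_4$ by $N^{\dagger}N$ and post-multiplying $NN^{-}NN^{*}=B_4N^{-}NN^{*}$ by $(N^{\dagger})^{*}$; your caution about $N^{-}$ being only an inner inverse and your intention to check that the reduced pair $\{B_4N^{-}N=N,\ N^{*}B_4=N^{*}N\}$ recovers the raw block equations are both well placed (they do: pre-multiply the second by $N^{-}N$, post-multiply the first by $N^{*}$ and use $NN^{-}N=N$).

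The one genuine defect is your claim that the top-left blocks force $B_1=C^{*}$. The equations you get are $C^{*}C=C^{*}B_1$ and $CC^{*}=B_1C^{*}$, and cancelling the invertible factor $C^{*}$ gives $B_1=C$, not $B_1=C^{*}$; indeed the paper's own proof concludes ``$C=B_1$'' at this step, in conflict with the block $C^{*}$ printed in the theorem statement. This matters for the converse you plan to run: with $B_1=C^{*}$ one would need $C^{*}C^{*}=C^{*}C$, i.e.\ $C=C^{*}$, so the ``if'' direction fails for non-Hermitian $C$. You should therefore prove the statement with $C$ in the top-left block of $B$ (treating the $C^{*}$ in the statement as a typo) rather than trying to force $B_1=C^{*}$ out of a cancellation that does not yield it.
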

\begin{proof}
Let $B=P\begin{bmatrix}
B_1&B_2\\B_3&B_4\end{bmatrix}P^{*}$. Suppose that $A\leq_{GD}^*B$. So,
\begin{align*}
    A^{GD,*}A-A^{GD,*}B=\begin{bmatrix}
    C^*C-C^*B_1&-C^*B_2\\-N^{-}NN^*B_3&N^{-}NN^*N-N^{-}NN^*B_4\end{bmatrix}=O
\end{align*}
and 
\begin{align*}
    AA^{GD,*}-BA^{GD,*}=\begin{bmatrix}
    CC^*-B_1C^*&-B_2N^{-}NN^*\\-B_3C^*&NN^{-}NN^*-B_4N^{-}NN^*\end{bmatrix}=O
\end{align*}
From the above two equations, we get $C=B_1$, $B_2=O$, $B_3=O$, $N^{-}NN^*N=N^{-}NN^*B_4$ and $NN^{-}NN^*=B_4N^{-}NN^*$. Pre-multiplying by $N^{\dagger}N$ in $N^{-}NN^*N=N^{-}NN^*B_4$, we have $N^*N=N^*B_4$. Post-multiplying by $(N^{\dagger})^*$ in  $NN^{-}NN^*=B_4N^{-}NN^*$, we get $N=B_4N^{-}N$. Conversely, from the equality $N^*B_4=N^*N$, we obtain $N^{GD,*}N=N^{GD,*}B$. Further, $N^{GD}\in N\{1\}$ and $B_4N^{-}N=N$ imply that $B_4N^{GD}N=N$. Post-multiplying by $N^*$, we get $B_4N^{GD}NN^*=NN^{GD}NN^*$, i.e., $B_4N^{GD,*}=NN^{GD,*}$. So, $N\leq_{GD}^*B_4$. Hence, $A\leq_{GD}^*B$. 
\end{proof}
We know that an $\{1,4\}$-inverse of a matrix $N$ is also an $\{1\}$-inverse of matrix $N$.
\begin{remark}
In equation \eqref{GD1}, replace  $N^{-}$ by $N^{(1,4)}$. Then, 
$A\leq_{GD}^*B$ if and only if 
$$B=P\begin{bmatrix}C^*&O\\O&B_4
\end{bmatrix}P^*,$$
where $N\leq^{*}B_4$.
\end{remark}
\begin{theorem}
Let $A,B,C\in\mathbb{C}^{m\times m}$. Then,
\begin{enumerate}[(i)]
    \item GD-star order is reflexive.
    \item If $A\leq_{GD}^*B$ and $B\leq_{GD}^*A$, then $A=B$, i.e., GD-star order is anti-symmetric.
    \item If $A\leq_{GD}^*B$ and $B\leq_{GD}^*C$, then $A^{GD,*}A=A^{GD,*}CB^{GD}B$ and $AA^{GD,*}=CB^{GD}BA^{GD,*}$.  
\end{enumerate}

\end{theorem}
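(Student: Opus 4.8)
The plan is to first read off from the defining formula $A^{GD,*}=A^{GD}AA^*$ the handful of identities that will power all three parts: the direct computation $AA^{GD,*}=AA^{GD}AA^*=AA^*$ (cf. Lemma~\ref{sa3}(xi)), the projector identity $(A^\dagger)^*A^{GD,*}=AA^\dagger$ (Lemma~\ref{sa3}(xiii)), and $A^{GD,*}(A^\dagger)^*=A^{GD}A$ (Theorem~\ref{sa2}). Throughout I would fix, for each order relation, the single GD inverse that produces the witnessing GD-star matrix, so that these identities all refer to the same $A^{GD}$. Part~(i) is then immediate: choosing $B=A$ makes both defining equations $A^{GD,*}A=A^{GD,*}A$ and $AA^{GD,*}=AA^{GD,*}$ hold trivially, so $A\leq_{GD}^*A$.

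For part~(ii), I would first pin down the ranges. Write $X=A^{GD,*}$ and $Y=B^{GD,*}$ for the two witnessing matrices. The relation $A\leq_{GD}^*B$ gives $AX=BX$, and since $AX=AA^*$ this reads $AA^*=BX$, whence $R(A)=R(AA^*)\subseteq R(B)$. Symmetrically $B\leq_{GD}^*A$ yields $BB^*=AY$ and $R(B)\subseteq R(A)$, so $R(A)=R(B)$. Next I would premultiply the first defining equation $XA=XB$ by $(A^\dagger)^*$ and use $(A^\dagger)^*X=AA^\dagger$ to get $A=AA^\dagger B$. Finally, since $AA^\dagger=P_{R(A)}$ and $R(A)=R(B)$ force $AA^\dagger=BB^\dagger$, I conclude $A=AA^\dagger B=BB^\dagger B=B$, establishing anti-symmetry.

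Part~(iii) rests on one auxiliary identity: $A\leq_{GD}^*B$ implies $A=BA^{GD}A$. To prove it I would start from $AA^*=BX$ (derived above), postmultiply by $(A^\dagger)^*$, and use $A^{GD,*}(A^\dagger)^*=A^{GD}A$ together with $AA^*(A^\dagger)^*=A$. Applying this identity to $B\leq_{GD}^*C$ gives $B=CB^{GD}B$. The two required equalities then follow by substitution: from $A\leq_{GD}^*B$ we have $A^{GD,*}A=A^{GD,*}B$ and $AA^{GD,*}=BA^{GD,*}$, and replacing $B$ by $CB^{GD}B$ on the right-hand sides yields $A^{GD,*}A=A^{GD,*}CB^{GD}B$ and $AA^{GD,*}=CB^{GD}BA^{GD,*}$.

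The bulk of the argument is routine block-free algebra; the one genuinely load-bearing step is the range equality $R(A)=R(B)$ in part~(ii), since it is exactly what upgrades $A=AA^\dagger B$ to $A=B$ through $AA^\dagger=BB^\dagger$. I expect the main care to be needed in making sure that $AX=AA^*$ and the two projector-type identities from Lemma~\ref{sa3} are invoked for the \emph{same} GD inverse that enters each order relation, so that the nonuniqueness of $A^{GD}$ never interferes with the substitutions.
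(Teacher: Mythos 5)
Your proposal is correct and follows essentially the same route as the paper: both arguments rest on the identities $AA^{GD,*}=AA^*$, $(A^{\dagger})^*A^{GD,*}=AA^{\dagger}$ and $A^{GD,*}(A^{\dagger})^*=A^{GD}A$, and both reduce part (iii) to the auxiliary fact $B=CB^{GD}B$, which the paper establishes inline in a chain of equalities rather than as a separate lemma. The only cosmetic difference is in part (ii), where you finish via $R(A)=R(B)\Rightarrow AA^{\dagger}=BB^{\dagger}$, whereas the paper substitutes $B=AB^{GD}B$ into $A=AA^{\dagger}B$; both steps are valid.
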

\begin{proof} (i) It is obvious.\\
(ii) From $A\leq_{GD}^*B$, we get 
$A^{GD,*}A=A^{GD,*}B$ and $AA^{GD,*}=BA^{GD,*}$. By the last equality $AA^{GD,*}=BA^{GD,*}$, we have $AA^{GD}AA^*=BA^{GD}AA^*$, i.e., $AA^*=BA^{GD}AA^*$. Post-multiplying by $(A^{\dagger})^*$ in $AA^*=BA^{GD}AA^*$, we obtain $A=BA^{GD}A$. 
Again, by the same process $B\leq_{GD}^*A$ implies $B=AB^{GD}B$. 
Now, pre-multiplying  $A^{GD,*}A=A^{GD,*}B$ by $A$, we obtain $AA^*A=AA^*B$ implies $A^*A=A^*B$. Again, pre-multiplying $A^*A=A^*B$ by $(A^{\dagger})^*$, we obtain $A=AA^{\dagger}B$. Using $B=AB^{GD}B$, we have $A=AA^{\dagger}AB^{GD}B$, i.e., $A=AB^{GD}B$. So, $A=B$.
\\
(iii) From $A\leq_{GD}^*B$ and $B\leq_{GD}^*C$, we get 
$A^{GD,*}A=A^{GD,*}B$, $AA^{GD,*}=BA^{GD,*}$, $B^{GD,*}B=B^{GD,*}C$ and $BB^{GD,*}=CB^{GD,*}$. Now, 
\begin{align*}
    A^{GD,*}A&=A^{GD,*}B\\
    &=A^{GD,*}BB^{GD}B\\
    &=A^{GD,*}BB^{GD}BB^{\dagger}B\\
    &=A^{GD,*}BB^{GD}BB^*(B^{\dagger})^*\\
    &=A^{GD,*}BB^{GD,*}(B^{\dagger})^*\\
    &=A^{GD,*}CB^{GD,*}(B^{\dagger})^*\\
    &=A^{GD,*}CB^{GD}BB^*(B^{\dagger})^*\\
    &=A^{GD,*}CB^{GD}B.
\end{align*}
Similarly, $AA^{GD,*}=CB^{GD}BA^{GD,*}$.   
\end{proof}
Next result is an equivalence relation between a GD-star order and group partial order.

\begin{theorem}

Let $A,B\in\mathbb{C}^{m\times m}$ be such that $ind(A)=1$ and $AB=BA$. Then, the following are equivalent:
\begin{enumerate}[(i)]
    \item $A\leq_{GD}^*B$,
    \item $A^{GD}AA^*=A^{GD,*}BA^{\dagger}$ and $A=BA^{GD}A$,
    \item $AA^*=AA^*BA^{\dagger}$ and $A^{k+1}=BA^{k}$,
   \item $A^*=A^*BA^{\dagger}$ and $A^{2}=BA$,
   \item $R(A^*)=R(A^*B)$ and $A^{2}=BA$.
\end{enumerate}
\end{theorem}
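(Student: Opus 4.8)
The plan is to prove the five statements equivalent through the single cycle (i)$\Rightarrow$(ii)$\Rightarrow$(iii)$\Rightarrow$(iv)$\Rightarrow$(v)$\Rightarrow$(i), exploiting that $ind(A)=1$ makes every GD inverse commute with $A$. Indeed, by Corollary \ref{co2.1}(ii) one has $AA^{GD}=A^{GD}A$, so the idempotent $E:=A^{GD}A=AA^{GD}$ satisfies $AE=EA=A$ and $A^{GD}A^{2}=A^{2}A^{GD}=A$, whence $A^{GD,*}=A^{GD}AA^{*}=EA^{*}$ is in fact the same matrix for every admissible $A^{GD}$. Throughout I will also use the Moore--Penrose identities $A^{*}AA^{\dagger}=A^{*}$, $A^{\dagger}AA^{*}=A^{*}$, and $AA^{*}(A^{\dagger})^{*}=A$, each of which follows from $AA^{\dagger}A=A$ together with the self-adjointness of $AA^{\dagger}$ and $A^{\dagger}A$.

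The four forward implications are short one-line manipulations. For (i)$\Rightarrow$(ii) I right-multiply the order equation $A^{GD,*}A=A^{GD,*}B$ by $A^{\dagger}$ and simplify its left side with $A^{*}AA^{\dagger}=A^{*}$ to reach $A^{GD}AA^{*}=A^{GD,*}BA^{\dagger}$, and I right-multiply $AA^{GD,*}=BA^{GD,*}$ (whose left side collapses to $AA^{*}$) by $(A^{\dagger})^{*}$, using $AA^{*}(A^{\dagger})^{*}=A$, to obtain $A=BA^{GD}A$. For (ii)$\Rightarrow$(iii) I left-multiply the first relation by $A$ and cancel $AA^{GD}A=A$ to get $AA^{*}=AA^{*}BA^{\dagger}$, and right-multiply $A=BA^{GD}A$ by $A$, cancelling $A^{GD}A^{2}=A$, to get $A^{2}=BA$, i.e. $A^{k+1}=BA^{k}$ since $k=1$. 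For (iii)$\Rightarrow$(iv) I left-multiply $AA^{*}=AA^{*}BA^{\dagger}$ by $A^{\dagger}$ and apply $A^{\dagger}AA^{*}=A^{*}$. Finally (iv)$\Rightarrow$(v) is immediate: $A^{*}=A^{*}BA^{\dagger}$ displays every column of $A^{*}$ as a combination of columns of $A^{*}B$, so $R(A^{*})\subseteq R(A^{*}B)$, while $R(A^{*}B)\subseteq R(A^{*})$ is automatic; the second condition is carried along unchanged.

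The substantive step is closing the loop, (v)$\Rightarrow$(i). One of the two order equations comes out cheaply: right-multiplying $A^{2}=BA$ by $A^{GD}$ and using $A^{2}A^{GD}=A$ gives $A=BA^{GD}A$, and post-multiplying by $A^{*}$ yields $AA^{*}=BA^{GD,*}$, while $AA^{GD,*}=AA^{*}$ holds directly; hence $AA^{GD,*}=BA^{GD,*}$. The genuine difficulty is the remaining equation $A^{GD,*}A=A^{GD,*}B$, which must be extracted from the mere range equality $R(A^{*})=R(A^{*}B)$. My plan is first to upgrade this range equality to the pointwise identity $A^{*}=A^{*}BA^{\dagger}$ by composing with the orthogonal projector $A^{\dagger}A=A^{*}(A^{*})^{\dagger}$ onto $R(A^{*})$; then left-multiply by $A^{GD}A$ to obtain $A^{GD,*}=A^{GD,*}BA^{\dagger}$, and right-multiply by $A$ to get $A^{GD,*}A=A^{GD,*}BA^{\dagger}A$. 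It then remains to absorb the stray projector $A^{\dagger}A$ on the right, i.e. to show $A^{GD,*}B\bigl(I_m-A^{\dagger}A\bigr)=0$, which is where the commuting hypothesis must be brought in: from $AB=BA$ one knows $B$ maps $N(A)$ into $N(A)$, and I would try to combine this with $A^{2}=BA$ and $R(A^{*})=R(A^{*}B)$ to force the vanishing.

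I expect this final absorption to be the main obstacle, and indeed the crux of the whole theorem. The forward chain (i)$\to$(v) is mechanical rewriting once the index-one commutation and the three Moore--Penrose identities are in hand; by contrast, the return step must recover multiplicative information ($A^{GD,*}A=A^{GD,*}B$) from a purely spatial statement, and the passage $R(A^{*})=R(A^{*}B)\Rightarrow A^{*}=A^{*}BA^{\dagger}$ is not formal—it genuinely needs the commutation $AB=BA$ and the relation $A^{2}=BA$ to control how $B$ interacts with the \emph{oblique} projector $A^{GD}A$ as against the \emph{orthogonal} projector $A^{\dagger}A$. I would therefore invest the most care in verifying that these hypotheses suffice to neutralise the component of $A^{GD,*}B$ lying on $N(A)$, since that is precisely the point at which the equivalence with (i) is earned rather than merely transported.
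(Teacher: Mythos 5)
Your forward chain (i)$\Rightarrow$(ii)$\Rightarrow$(iii)$\Rightarrow$(iv)$\Rightarrow$(v) is correct and essentially identical to the paper's (post- and pre-multiplication by $A^{\dagger}$, $(A^{\dagger})^{*}$ and $A$, using $A^{*}AA^{\dagger}=A^{*}$ and $A^{GD}A^{2}=A$), as is your derivation of $AA^{GD,*}=BA^{GD,*}$ from $A^{2}=BA$ in the return step. The genuine gap is exactly where you predicted it: you never prove $A^{GD,*}A=A^{GD,*}B$ from (v); you only describe a plan, and that plan cannot be carried out. Its very first move, upgrading $R(A^{*})=R(A^{*}B)$ to $A^{*}=A^{*}BA^{\dagger}$, is already false: since $R(A^{*}B)\subseteq R(A^{*})$ always holds, the range equality carries no more information than $rank(A^{*}B)=rank(A^{*})$, and composing with the orthogonal projector $A^{\dagger}A$ onto $R(A^{*})$ produces $A^{*}BA^{\dagger}A$ on the wrong side, not $A^{*}BA^{\dagger}$ acting back on $A^{*}$.

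Concretely, take $A=\begin{bmatrix}1&1\\0&0\end{bmatrix}$ and $B=I_{2}$. Then $ind(A)=1$, $AB=BA=A$, $A^{2}=A=BA$, and $R(A^{*}B)=R(A^{*})$, so (v) holds. But every GD inverse of $A$ has the form $\begin{bmatrix}1&a\\0&1-a\end{bmatrix}$, so $A^{GD}A=A$ and $A^{GD,*}=AA^{*}=\begin{bmatrix}2&0\\0&0\end{bmatrix}$ for all of them, whence $A^{GD,*}A=\begin{bmatrix}2&2\\0&0\end{bmatrix}\neq A^{GD,*}=A^{GD,*}B$; likewise $A^{*}BA^{\dagger}=\frac{1}{2}A^{*}\neq A^{*}$. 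Thus (v) implies neither (i) nor (iv), and the absorption $A^{GD,*}B(I_{m}-A^{\dagger}A)=O$ you hoped to force is impossible in principle: $A(A-B)=O$ puts $R(A-B)$ inside $N(A)$, but killing it under $A^{*}$ would require $N(A)\subseteq N(A^{*})$, i.e.\ $A$ EP, which is not assumed. For comparison, the paper's own proof of (v)$\Rightarrow$(i) dispatches this half with a bare ``similarly'' after deriving $A^{2}=AB$, and never invokes $R(A^{*})=R(A^{*}B)$ at all; your refusal to write that step down has in fact located a genuine error in the theorem rather than a trick you failed to find.
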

\begin{proof} (i)$\Rightarrow $(ii) $A\leq_{GD}^*B$ implies that $A^{GD,*}A=A^{GD,*}B, \text{ and } AA^{GD,*}=BA^{GD,*}.$ Post-multiplying by $A^{\dagger}$ in $A^{GD,*}A=A^{GD,*}B$, we obtain $A^{GD}AA^*=A^{GD,*}BA^{\dagger}$.
Again, post-multiplying $AA^{GD,*}=BA^{GD,*}$ by $(A^{\dagger})^*$, we get $AA^{GD}AA^*(A^{\dagger})^*=BA^{GD}AA^*(A^{\dagger})^*$ implies that $AA^{\dagger}A=BA^{GD}AA^{\dagger}A$, i.e., $A=BA^{GD}A$.\\
(ii)$\Rightarrow$(iii) Pre-multiplying by $A$ in $A^{GD}AA^*=A^{GD,*}BA^{\dagger}$, we get $AA^*=AA^*BA^{\dagger}$. Post-multiplying $A=BA^{GD}A$ by $A^{k}$, we obtain $A^{k+1}=BA^{k}$.\\
(iii)$\Rightarrow$(iv) Pre-multiplying $AA^*=AA^*BA^{\dagger}$ by $A^{\dagger}$, we have $A^*=A^*BA^{\dagger}$. Post-multiply $A^{k+1}=BA^k$ by $(A^{\#})^kA$, we arrive  $A^2=BA$. \\
(iv)$\Rightarrow$(v) $R(A^*)=R(A^*BA^{\dagger})\subseteq R(A^*B)\subseteq R(A^*)$, i.e., $R(A^*)=R(A^*B)$.\\
(v)$\Rightarrow$(i) The index of $A$ is 1. So, $A^{GD}A^2=A$, and
 $A^{2}=BA$ implies that $AA^{GD}A^2=BA^{GD}A^2$. Post-multiplying $AA^{GD}A^2=BA^{GD}A^2$ by $A^\#A^*$, we get $AA^{GD}AA^*=BA^{GD}AA^*$, i.e., $AA^{GD,*}=BA^{GD,*}$. We have $AB=BA$ and $A^2=BA$. So,  $A^{2}=AB$. Similarly, we obtain $A^{GD,*}A=A^{GD,*}B$. Hence, $A\leq_{GD}^*B$.

\end{proof} For simplicity,
if $A^{D,\dagger}A=A^{D,\dagger}B$ and $AA^{D,\dagger}=BA^{D,\dagger}$, then we denote it $A\leq_{D}^{\dagger}B$.  Suppose $A$ is below $B$ under a GD-star order, then $A$ and $B$ satisfy some properties are stated below. 
\begin{theorem}
 Let $A, B \in\mathbb{C}^{m\times m}$ be such that $ind(A)=k$. If $A$ is  below $B$ under a GD-star order, then the following hold: 
 \begin{enumerate}[(i)]
     \item $A^*A=A^*B$.
     \item $A=AA^{\dagger}B$.
     \item $A^{k+1}=BA^k$.
     \item $AA^D=BA^D$.
     \item $A\leq_D^{\dagger}B$.
     \item $A^{GD,\dagger}A=A^{GD,\dagger}B$ and $BA^{GD,\dagger}=AA^{GD,\dagger}.$
 \end{enumerate}
\end{theorem}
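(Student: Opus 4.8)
The plan is to fix a GD inverse $A^{GD}$ for which $X:=A^{GD,*}=A^{GD}AA^*$ realizes the order, so that the two defining identities read $XA=XB$ and $AX=BX$. I would then peel off the six items in the order listed, since each later item feeds on the earlier ones, leaning throughout on the computational identities already recorded in Lemma \ref{sa3} and Theorem \ref{sa2} together with the standard Moore--Penrose facts $(A^{\dagger})^*A^*=AA^{\dagger}$, $A^*(A^{\dagger})^*=A^{\dagger}A$ and $AA^{\dagger}A=A$.

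For (i), I would premultiply $XA=XB$ by $A^{\dagger}A$ and invoke Lemma \ref{sa3}(iii), namely $A^{\dagger}AX=A^*$, which collapses both sides to $A^*A=A^*B$. For (ii), I would premultiply that identity by $(A^{\dagger})^*$ and use $(A^{\dagger})^*A^*=AA^{\dagger}$ to reach $AA^{\dagger}A=AA^{\dagger}B$, i.e.\ $A=AA^{\dagger}B$. For (iii), I would turn to the second defining identity $AX=BX$: since $AX=AA^{GD}AA^*=AA^*$, this yields $AA^*=BA^{GD}AA^*$; postmultiplying by $(A^{\dagger})^*$ and using $A^*(A^{\dagger})^*=A^{\dagger}A$ (so that $AA^*(A^{\dagger})^*=A$) produces the key auxiliary relation $A=BA^{GD}A$, and postmultiplying this by $A^k$ together with $A^{GD}A^{k+1}=A^k$ gives $A^{k+1}=BA^k$.

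For (iv), I would postmultiply $A^{k+1}=BA^k$ by $(A^D)^{k+1}$ and simplify with the Drazin identities $A^{k+1}(A^D)^{k+1}=AA^D$ and $A^k(A^D)^{k+1}=A^D$, obtaining $AA^D=BA^D$. Items (v) and (vi) are then essentially bookkeeping on the DMP inverse $A^{D,\dagger}=A^DAA^{\dagger}$ and the GDMP inverse $A^{GD,\dagger}=A^{GD}AA^{\dagger}$: using $AA^{\dagger}A=A$ and part (ii) (which gives $AA^{\dagger}B=A$) one checks at once $A^{D,\dagger}A=A^DA=A^{D,\dagger}B$ and $A^{GD,\dagger}A=A^{GD}A=A^{GD,\dagger}B$; postmultiplying (iv) by $AA^{\dagger}$ gives $AA^{D,\dagger}=BA^{D,\dagger}$, while the auxiliary relation $A=BA^{GD}A$ from (iii) gives $BA^{GD,\dagger}=BA^{GD}AA^{\dagger}=AA^{\dagger}=AA^{GD,\dagger}$.

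The calculations are all short; the only genuine choices — and hence the part needing care — are selecting the correct one-sided multiplier at each stage (premultiplying by $A^{\dagger}A$ in (i), premultiplying by $(A^{\dagger})^*$ in (ii), postmultiplying by $(A^{\dagger})^*$ in (iii)) so that a Moore--Penrose projector absorbs the unwanted factor, and isolating the auxiliary identity $A=BA^{GD}A$, which is what links the ``$AX=BX$'' half of the hypothesis to the Drazin-order conclusions (iii)--(vi).
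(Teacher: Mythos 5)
Your proposal is correct and follows essentially the same route as the paper: derive $A^*A=A^*B$ from $XA=XB$ via the projector $A^{\dagger}A$, extract the auxiliary identity $A=BA^{GD}A$ from $AX=BX$ by postmultiplying with $(A^{\dagger})^*$, and then chain up to the Drazin and (G)DMP statements. Your handling of (v) and (vi) by reusing parts (ii) and (iii) is marginally cleaner than the paper's explicit multipliers, but the argument is the same in substance.
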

\begin{proof}
We have  $A$ is  below $B$ under a GD-star order, i.e.,   $$A^{GD,*}A=A^{GD,*}B, \text{ and } AA^{GD,*}=BA^{GD,*}.$$
(i)
Pre-multiplying by $A$ in $A^{GD,*}A=A^{GD,*}B$, we get $AA^{GD}AA^*A=AA^{GD}AA^*B$, i.e., $AA^*A=AA^*B$. Again, pre-multiplying by $A^{\dagger}$ in $AA^*A=AA^*B$, we get $A^{\dagger}AA^*A=A^{\dagger}AA^*B$, i.e., $A^*(A^*)^{\dagger}A^*A=A^*(A^*)^{\dagger}A^*B$, i.e., $A^*A=A^*B$. \\
(ii) It is easily obtained  pre-multiplying (i) by $(A^{\dagger})^*$, we get $AA^{\dagger}A=AA^{\dagger}B$, i.e., $A=AA^{\dagger}B$.\\
(iii) Post-multiplying by $(A^{\dagger})^*$ in $AA^{GD,*}=BA^{GD,*}$, we get  $AA^{GD}AA^*(A^{\dagger})^*=BA^{GD}AA^*(A^{\dagger})^*$, i.e, $A(A^{\dagger}A)^*=BA^{GD}A(A^{\dagger}A)^*$, i.e., $A=BA^{GD}A$. Again, post-multiplying by $A^k$ in $A=BA^{GD}A$, we get $A^{k+1}=BA^k$.\\
(iv)
Every square matrix is Drazin invertible. So, post-multiplying   $A^{k+1}=BA^k$ by $(A^{D})^{k+1}$, we get $AA^{D}=BA^D$.\\
(v) Post-multiplying by $AA^{\dagger}$ in (iv), we get $AA^{D,\dagger}=BA^{D,\dagger}$. Pre-multiplying by $A^{\dagger}A$ in $A^{GD,*}A=A^{GD,*}B$, we get $A^*A=A^*B$. Again, pre-multiplying $A^*A=A^*B$ by $A^DAA^{\dagger}(A^{\dagger})^*$, we otain $A^{D,\dagger}A=A^{D,\dagger}B$. Hence, $A\leq_D^{\dagger}B$.\\
(vi) Pre-multiplying  $A^{GD,*}A=A^{GD,*}B$ by $(A^{\dagger})^*A$, we have $A^*A=A^*B$. Again, pre-multiplying by $A^{GD}AA^{\dagger}(A^{\dagger})^*$ in $A^*A=A^*B$, we obtain $A^{GD,\dagger}A=A^{GD,\dagger}B$. Post-multiplying   $AA^{GD,*}=BA^{GD,*}$ by $(A^{\dagger})^*A^{\dagger}$, we get $AA^{GD,\dagger}=BA^{GD,\dagger}$.
\end{proof}
If $A$ is below $B$ under the minus partial order and $*$ partial order, then $A$ is below $B$ under a GD-star order.
\begin{theorem}
 Let $A, B \in\mathbb{C}^{m\times m}$ be such that $ind(A)=k$. If $A\leq^{-}B$ and $A\leq^{*}B$, then $A\leq_{GD}^{*} B$.
 \end{theorem}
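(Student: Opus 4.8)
The plan is to verify the two defining equalities of the GD-star order, $A^{GD,*}A=A^{GD,*}B$ and $AA^{GD,*}=BA^{GD,*}$, for a judiciously chosen $A^{GD}\in A\{GD\}$. First I would unpack the hypotheses into working identities. From $A\leq^{*}B$ one gets $A^{*}A=A^{*}B$ and $AA^{*}=BA^{*}$, equivalently $(B-A)A^{*}=O$ and $A^{*}(B-A)=O$; taking adjoints also gives $A(B-A)^{*}=O$, so that $R(B-A)\perp R(A)$ and $R((B-A)^{*})\perp R(A^{*})$. From $A\leq^{-}B$ one fixes an inner inverse $A^{-}$ with $AA^{-}=BA^{-}$ and $A^{-}A=A^{-}B$, hence $(B-A)A^{-}=O$, $A^{-}(B-A)=O$, and the two useful product forms $A=BA^{-}A=AA^{-}B$.

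The first equality is the easy half and, I expect, holds for every $A^{GD}\in A\{GD\}$: since $A^{GD,*}=A^{GD}AA^{*}$, left-multiplying $A^{*}A=A^{*}B$ by $A^{GD}A$ gives $A^{GD}A\,A^{*}A=A^{GD}A\,A^{*}B$, i.e. $A^{GD,*}A=A^{GD,*}B$. For the second equality I would first simplify its left side: because $AA^{GD}A=A$, one has $AA^{GD,*}=AA^{GD}AA^{*}=AA^{*}$, so the goal collapses to $BA^{GD}AA^{*}=AA^{*}$. Here the minus-order identity $A=BA^{-}A$ is the lever: it yields $BA^{-}AA^{*}=AA^{*}$ immediately, so it would suffice to choose the (non-unique) GD inverse so that $A^{GD}AA^{*}=A^{-}AA^{*}$, equivalently so that the oblique projectors $A^{GD}A$ and $A^{-}A$ agree on $R(A^{*})$. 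Then $BA^{GD}AA^{*}=BA^{-}AA^{*}=AA^{*}$ and the equality follows.

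The hard part is precisely this last synchronization. A generic GD inverse need \emph{not} satisfy $AA^{GD,*}=BA^{GD,*}$, so one cannot argue with an arbitrary representative; and the inner inverse $A^{-}$ furnished by the minus order need not itself be a GD inverse, because the GD constraints $A^{k+1}A^{GD}=A^{k}=A^{GD}A^{k+1}$ pin down the action of $A^{GD}A$ on the core subspace $R(A^{k})$ while leaving only the nilpotent block free. The crux is therefore to exhibit a \emph{single} inner inverse that is simultaneously a GD inverse and a witness of the minus order, or, failing that, a GD inverse whose projector $A^{GD}A$ matches $A^{-}A$ after multiplication by $A^{*}$. I would attempt this by writing $A$ and $A^{GD}$ in the core-nilpotent form of \eqref{ka5} and \eqref{ka7}, exploiting the freedom in the inner inverse $N^{-}$ of the nilpotent block, and then using the orthogonality relations $R(B-A)\perp R(A)$ and $R((B-A)^{*})\perp R(A^{*})$ coming from $A\leq^{*}B$ to force the off-diagonal coupling to vanish on $R(A^{*})$. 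Reconciling the limited freedom available in $N^{-}$ with what the minus order demands on the core block is the delicate point on which the whole argument turns, and it is where I would expect the proof to require the most care.
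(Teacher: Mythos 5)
Your first half is correct and is exactly what the paper does: left-multiplying $A^*A=A^*B$ by $A^{GD}A$ gives $A^{GD,*}A=A^{GD,*}B$ for every GD inverse. Your diagnosis of the second half is actually sharper than the paper's own argument. The paper writes $AA^{GD}AA^*=BA^{GD}AA^*$ ``from $AA^{-}=BA^{-}$ and $A^{GD}\in A\{1\}$,'' i.e.\ it silently substitutes $A^{GD}$ for the witness $A^{-}$ of the minus order; but the minus order is witnessed by \emph{some} inner inverse and does not propagate to every inner inverse (if it did for all of $A\{1\}$ one would be forced to $A=B$). You are right that the whole burden is to produce one $A^{GD}$ with $(B-A)A^{GD}AA^*=O$.

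The gap is that the ``synchronization'' you defer to the end cannot be carried out, because the statement itself fails. Take $A=\begin{bmatrix}1&1\\0&0\end{bmatrix}$ and $B=\begin{bmatrix}1&1\\1&-1\end{bmatrix}$. Then $AA^*=BA^*=\begin{bmatrix}2&0\\0&0\end{bmatrix}$ and $A^*A=A^*B=\begin{bmatrix}1&1\\1&1\end{bmatrix}$, so $A\leq^{*}B$ and hence $A\leq^{-}B$ (with $A^{\dagger}$ as witness). Here $ind(A)=1$ and $A^2=A$, so every $A^{GD}\in A\{GD\}$ satisfies $A^{GD}A=A$, whence $A^{GD,*}=AA^*=\begin{bmatrix}2&0\\0&0\end{bmatrix}$ is the \emph{unique} GD-star matrix; yet $AA^{GD,*}=\begin{bmatrix}2&0\\0&0\end{bmatrix}$ while $BA^{GD,*}=\begin{bmatrix}2&0\\2&0\end{bmatrix}$. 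The structural obstruction is the one your own reduction exposes: $(B-A)$ annihilates $R(A^*)$, but $R(A^{GD}AA^*)\subseteq R(A^{GD}A)$ and the GD equations force $R(A^k)\subseteq R(A^{GD}A)$, so the projector $A^{GD}A$ cannot be steered to agree with $A^{\dagger}A$ on $R(A^*)$ unless $R(A^k)\subseteq R(A^*)$. So the ``delicate point'' you flag is not merely delicate but impossible in general; both your plan and the paper's proof need an additional hypothesis of this kind (e.g.\ $R(A^k)\subseteq R(A^*)$, or $A$ EP) for the conclusion $AA^{GD,*}=BA^{GD,*}$ to hold.
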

 \begin{proof}
 The hypothesis $A\leq^{-}B$ implies that
 \begin{equation}\label{pri1}
     AA^{-}=BA^{-},
 \end{equation}
 and
 \begin{equation}\label{pri2}
     A^{-}A=A^{-}B.
 \end{equation}
 We know that  $A^{GD}\in A\{1\}$. From \eqref{pri1} and $A^{GD}\in A\{1\}$, we obtain $AA^{GD,*}=AA^{GD}AA^*=BA^{GD}AA^*=BA^{GD,*}$, i.e., $AA^{GD,*}=BA^{GD,*}$. Similarly, from $A\leq^*B$,  we get $A^{GD,*}A=A^{GD,*}B$. Hence, $A\leq_{GD}^{*}B$.\\
 
 \end{proof}
 
Using similar step as in the above one can prove the next result.
 \begin{corollary}
 Let $A, B \in\mathbb{C}^{m\times m}$ be such that $ind(A)=k$. If $A\leq^{GD}B$ and $A\leq^*B$, then $A\leq_{GD}^{*} B$.
 \end{corollary}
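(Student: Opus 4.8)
The plan is to mimic the proof of the preceding theorem almost verbatim, replacing the role played there by the minus partial order with the GD order, and to keep the star-order hypothesis doing the same job it did before. First I would fix a single GD inverse $A^{GD}$ that simultaneously witnesses $A\leq^{GD}B$ and defines the GD-star matrix $A^{GD,*}=A^{GD}AA^*$; this consistency is the one point worth stating explicitly, since a GD inverse is not unique and the argument must read both sides of the GD order off the same $A^{GD}$ that appears in $A^{GD,*}$. By analogy with the orders listed in the Introduction, $A\leq^{GD}B$ supplies the identities $AA^{GD}=BA^{GD}$ and $A^{GD}A=A^{GD}B$, while $A\leq^* B$ supplies $AA^*=BA^*$ and $A^*A=A^*B$.

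For the range-side defining equation I would expand $AA^{GD,*}=AA^{GD}AA^*$ and substitute the GD-order identity $AA^{GD}=BA^{GD}$ to obtain $AA^{GD,*}=BA^{GD}AA^*=BA^{GD,*}$. For the null-side equation I would expand $A^{GD,*}A=A^{GD}A(A^*A)$ and substitute the star-order identity $A^*A=A^*B$ to obtain $A^{GD,*}A=A^{GD}AA^*B=A^{GD,*}B$. Combining the two computations gives exactly the defining conditions $AA^{GD,*}=BA^{GD,*}$ and $A^{GD,*}A=A^{GD,*}B$, whence $A\leq_{GD}^{*}B$.

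There is essentially no analytic obstacle here; each equation is a one-line substitution using $AA^{GD}A=A$. The only thing to watch is the bookkeeping of which hypothesis feeds which equation: only the range-side condition of the GD order ($AA^{GD}=BA^{GD}$) and the null-side condition of the star order ($A^*A=A^*B$) are actually invoked, and both must be taken for the same fixed GD inverse appearing in $A^{GD,*}$. The unused identities $A^{GD}A=A^{GD}B$ and $AA^*=BA^*$ suggest the result could be sharpened to weaker one-sided hypotheses, but I would retain the symmetric formulation to match the corollary as stated and to parallel the companion theorem for $A\leq^{-}B$.
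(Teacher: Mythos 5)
Your proof is correct and is essentially the argument the paper intends: the paper proves the preceding theorem (with $\leq^{-}$ in place of $\leq^{GD}$) by exactly these two substitutions and then states that this corollary follows "using similar steps," which amounts to reading $AA^{GD}=BA^{GD}$ directly from $A\leq^{GD}B$ for the range-side equation and $A^{*}A=A^{*}B$ from $A\leq^{*}B$ for the other. Your explicit remark that the same fixed $A^{GD}$ must witness both the GD order and the matrix $A^{GD,*}$ is a point the paper leaves implicit.
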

From $A\leq^{GD}B$, we have $A^{GD}A=A^{GD}B$ and $AA^{GD}=BA^{GD}$. Pre-multiplying $A^{GD}A=A^{GD}B$ by $(A^D)^{k+1}A^{k+1}$, we obtain $(A^D)^{k+1}A^{k+1}=(A^D)^{k+1}A^kB$, i.e., $A^DA=A^DB$. Similarly, post-multiplying $AA^{GD}=BA^{GD}$ by $A^{k+1}(A^D)^{k+1}$, we get $AA^D=BA^D$. Therefore, $A\leq^DB$.  We conclude this section with the following remark.
 \begin{remark}

 If $A\leq^{GD}B$, then $A\leq^DB$.
 \end{remark}

\subsection{Dual GD-star or star-GD matrices}\label{sec6}
\noindent 
In this section, we discus about the dual of a GD-star matrix. The proofs of a dual GD-star matrix are similar to the proof of a GD-star matrix. Owing the similarity, we deal with a few important results of a dual GD-star matrix. We will start this section with the definition of a dual GD-star matrix.
\begin{definition}\label{dsa1}
Let $A \in \mathbb{C}^{m\times m}$ and $ ind(A)=k$. Let  $A^{GD} \in A\{GD\}$, a dual GD-star matrix of $A$, denoted by $A^{*,GD}$, be an $m \times m$ matrix
$$A^{*,GD} = A^{*}AA^{GD}.$$
\end{definition}
\begin{theorem}\label{dsa2}
Let $A\in \mathbb{C}^{m\times m}$. Then, the system 
$$X(A^{\dagger})^*X=X,~XA^k=A^*A^k, \text{ and } (A^{\dagger})^*X=AA^{GD}$$
has a solution of the form $X=A^{*}AA^{GD}$, for every nonnegative integer $k$. 
\end{theorem}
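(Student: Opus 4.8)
The plan is to set $X=A^{*}AA^{GD}$ and verify the three defining relations directly, in the order (3), (2), (1), exploiting that this statement is the conjugate-transpose dual of Theorem \ref{sa2}. The two workhorses throughout are the Moore--Penrose relation $(AA^{\dagger})^{*}=AA^{\dagger}$, which gives $(A^{\dagger})^{*}A^{*}=(AA^{\dagger})^{*}=AA^{\dagger}$, together with the inner-inverse relation $AA^{GD}A=A$ and the absorption $AA^{\dagger}A=A$. No spectral or structural (core--nilpotent) information is needed; everything reduces to sliding these three idempotent-type identities through the product.

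First I would dispatch the commuting relation (3). Writing $(A^{\dagger})^{*}X=(A^{\dagger})^{*}A^{*}AA^{GD}$ and collapsing the front pair via $(A^{\dagger})^{*}A^{*}=AA^{\dagger}$ yields $AA^{\dagger}AA^{GD}$, and a single application of $AA^{\dagger}A=A$ gives $AA^{GD}$, as required. Next, for the power relation (2) I would factor one copy of $A$ out of $A^{k}$ (for $k\ge 1$): since $XA^{k}=A^{*}AA^{GD}A\,A^{k-1}$ and $AA^{GD}A=A$, the middle collapses to $A^{*}A\,A^{k-1}=A^{*}A^{k}$. For $k=0$ the index forces $A$ to be invertible, whence $A^{GD}=A^{-1}$ and the identity $X=A^{*}$ is immediate, which is why the claim can be asserted for every nonnegative integer $k$.

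The only relation requiring two successive absorptions is the outer relation (1). I would compute
$$X(A^{\dagger})^{*}X=A^{*}AA^{GD}\big[(A^{\dagger})^{*}A^{*}\big]AA^{GD}=A^{*}AA^{GD}\,AA^{\dagger}\,AA^{GD},$$
then remove the inner block $AA^{\dagger}A=A$ to reach $A^{*}AA^{GD}AA^{GD}$, and finally remove $AA^{GD}A=A$ to land on $A^{*}AA^{GD}=X$. The main (mild) obstacle is purely bookkeeping: keeping the conjugate-transpose identity $(A^{\dagger})^{*}A^{*}=AA^{\dagger}$ correctly oriented, and making sure each reduction is applied to a genuine $A\,(\cdot)\,A$ block. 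In particular, since $A^{GD}$ is \emph{not} an outer inverse (so $A^{GD}AA^{GD}\neq A^{GD}$ in general), both reductions in (1) must be routed through $AA^{GD}A=A$ rather than through any putative $A^{GD}AA^{GD}$ simplification.
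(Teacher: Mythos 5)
Your verification is correct and is exactly the intended argument: the paper omits the proof of this dual statement as ``similar'' to Theorem \ref{sa2}, whose proof performs precisely the mirror-image computations (collapsing $(A^{\dagger})^{*}A^{*}=AA^{\dagger}$ and absorbing $AA^{GD}A=A$ and $AA^{\dagger}A=A$). Your extra care in routing both reductions in relation (1) through $AA^{GD}A=A$ rather than a nonexistent outer-inverse identity, and your reading of the $k=0$ case, are if anything slightly more scrupulous than the source.
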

\begin{lemma}\label{dsa3}
Let $A \in \mathbb{C}^{m\times m}$, and $ind(A)=k$. If $A^{GD}\in A\{GD\}$, then a dual GD-star matrix $X$ of the matrix $A$ satisfies the following properties:
\begin{enumerate}[(i)]
   
     \item $(A^{\dagger})^*XA=A$.
    \item $XA^k=A^*A^{k+1}A^{GD}=A^*A^{GD}A^{k+1}$.
    \item $XAA^{\dagger}=A^*$.
    \item $(A^{\dagger})^*XA^k=A^k$.
    \item $A^{GD}(A^{\dagger})^*X=A^{GD}AA^{GD}$.
    \item $X(A^{\dagger})^*A^k=A^k$.
    \item $X^2AA^{\dagger}=XA^*$.
    \item $XAX=(A^*A)^2A^{GD}$.
    \item $XA^{\dagger}AX=X^2$.
    \item $(XA)^*=XA$.
    \item $(A^{\dagger})^*X(A^{\dagger})^*=(A^{\dagger})^*.$
    \item $X(A^{\dagger})^*=A^{\dagger}A.$
\end{enumerate}
\end{lemma}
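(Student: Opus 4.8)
The plan is to fix an arbitrary $A^{GD}\in A\{GD\}$, put $X=A^{*}AA^{GD}$, and check the twelve identities one at a time by direct substitution, mirroring the computations of Lemma~\ref{sa3} with every product read in the reversed order. Before starting the list I would assemble the toolkit on which everything rests: the GD relation $AA^{GD}A=A$ and the two index relations $A^{k+1}A^{GD}=A^{k}$, $A^{GD}A^{k+1}=A^{k}$; the Moore--Penrose cancellations $A^{*}AA^{\dagger}=A^{*}$ and $A^{\dagger}AA^{*}=A^{*}$ (each obtained by pushing a conjugate transpose through the Hermitian factors $AA^{\dagger}$ and $A^{\dagger}A$); the transfer identities $A^{*}(A^{\dagger})^{*}=A^{\dagger}A$ and $(A^{\dagger})^{*}A^{*}=AA^{\dagger}$; and the two relations already furnished by Theorem~\ref{dsa2}, namely $(A^{\dagger})^{*}X=AA^{GD}$ and $XA^{k}=A^{*}A^{k}$.

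Given this toolkit, most of the list collapses immediately. Items (i), (iv) and (v) come straight from $(A^{\dagger})^{*}X=AA^{GD}$ together with $AA^{GD}A=A$ and, for the identities that must reproduce $A^{k}$, the index relations; item (ii) is just the two index relations applied to $A^{*}AA^{GD}A^{k}$. Items (iii), (vii), (viii) and (x) all reduce to collapsing a central $AA^{GD}A=A$ and then cancelling with $A^{*}AA^{\dagger}=A^{*}$: for instance $XA=A^{*}(AA^{GD}A)=A^{*}A$ is Hermitian, which is (x), and $X^{2}AA^{\dagger}=X\,A^{*}(AA^{GD}A)A^{\dagger}=XA^{*}$ is (vii); similarly $XAX=A^{*}(AA^{GD}A)A^{*}AA^{GD}=(A^{*}A)^{2}A^{GD}$ gives (viii). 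Item (ix) I would reduce by first noting the auxiliary fact $A^{\dagger}AX=X$, which follows from $A^{\dagger}AA^{*}=A^{*}$, and then reading $XA^{\dagger}AX=X(A^{\dagger}AX)=X^{2}$.

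The step that genuinely needs a trick, and which I expect to be the main obstacle, is the block carrying $(A^{\dagger})^{*}$ on the right, that is items (xi) and (xii): here $A^{GD}(A^{\dagger})^{*}$ admits no standalone simplification. I would first settle (xii) by rewriting $(A^{\dagger})^{*}=(A^{\dagger}AA^{\dagger})^{*}=AA^{\dagger}(A^{\dagger})^{*}$, substituting into $X(A^{\dagger})^{*}=A^{*}AA^{GD}(A^{\dagger})^{*}$, collapsing the central $AA^{GD}A$, and closing with $A^{*}AA^{\dagger}=A^{*}$ and $A^{*}(A^{\dagger})^{*}=A^{\dagger}A$ to obtain $X(A^{\dagger})^{*}=A^{\dagger}A$. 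Once (xii) is in place, (xi) follows from the companion expansion $(A^{\dagger})^{*}=(A^{\dagger})^{*}A^{\dagger}A$, since then $(A^{\dagger})^{*}X(A^{\dagger})^{*}=(A^{\dagger})^{*}A^{\dagger}A=(A^{\dagger})^{*}$, and nothing beyond the same toolkit is required.
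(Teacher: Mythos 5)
Your overall strategy --- substitute $X=A^{*}AA^{GD}$ and dualize the computations of Lemma~\ref{sa3} using the toolkit $AA^{GD}A=A$, $A^{k+1}A^{GD}=A^{GD}A^{k+1}=A^{k}$, $A^{*}AA^{\dagger}=A^{\dagger}AA^{*}=A^{*}$, $A^{*}(A^{\dagger})^{*}=A^{\dagger}A$, $(A^{\dagger})^{*}A^{*}=AA^{\dagger}$, together with the two identities supplied by Theorem~\ref{dsa2} --- is exactly what the paper intends: the paper prints no proof of this lemma and explicitly defers to the proofs in the GD-star case. Your verifications of items (i)--(v) and (vii)--(xii) are all correct; in particular the route to (xi) through (xii) via the expansion $(A^{\dagger})^{*}=(A^{\dagger})^{*}A^{\dagger}A$ is clean and mirrors the corresponding steps in the proof of Lemma~\ref{sa3}.

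However, you never address item (vi), and the omission is not harmless: item (vi) as printed is false. By your own item (xii), $X(A^{\dagger})^{*}A^{k}=A^{\dagger}A\cdot A^{k}=A^{\dagger}A^{k+1}$, and $A^{\dagger}A^{k+1}=A^{k}$ fails in general: for $A=\begin{bmatrix}1&1\\0&0\end{bmatrix}$ (the matrix of Example~\ref{exam3}, with $k=1$) one has $A^{\dagger}A^{2}=A^{\dagger}A=\frac{1}{2}\begin{bmatrix}1&1\\1&1\end{bmatrix}\neq A$. The statement appears to be an un-dualized copy of Lemma~\ref{sa3}(vi); the correct dual assertion is $A^{k}(A^{\dagger})^{*}X=A^{k}$, which does follow from your toolkit, since $A^{k}(A^{\dagger})^{*}X=A^{k}AA^{GD}=A^{k+1}A^{GD}=A^{k}$. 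You should either prove the corrected statement or flag the misprint explicitly; as written, your proposal is silent on precisely the one item that cannot be pushed through.
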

\begin{theorem}
Let $A\in\mathbb{C}^{m\times m}$ be the form of \eqref{ka5} and $ind(A)=k$. If  $A^{GD}\in A\{GD\}$, then 
\begin{align}\label{ka6} A^{*,GD}=P\begin{bmatrix}C^*&C^*C^{-k}(\widehat{T'}-\widehat{T}N^{-})+C^*SN^{-}\\S^*&S^*C^{-k}(\widehat{T'}-\widehat{T}N^{-})+N^*NN^{-}\end{bmatrix}P^*,\end{align}
where $P$ is a unitary matrix and $\widehat{T'}-\widehat{T}N^{-}=-C^{k}SN^{-}+\displaystyle\sum_{j=0}^{k-1}C^{j}SN^{k-j-1}(I-NN^{-})$.
\end{theorem}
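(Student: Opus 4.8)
The plan is to establish the representation by a direct block computation, mirroring exactly the strategy used for the GD-star matrix in Theorem \ref{thm2.8}: substitute the block forms of $A$, $A^{*}$ and $A^{GD}$ into the definition $A^{*,GD}=A^{*}AA^{GD}$ and multiply out, using that $P$ is unitary so that each interior factor $P^{*}P$ cancels. No new idea beyond the representation \eqref{ka7} of a GD inverse is needed; the work is entirely bookkeeping.

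First I would record the three factors in the common basis $P$. Conjugate-transposing the blocks in \eqref{ka5} gives
\[
A^{*}=P\begin{bmatrix}C^{*}&O\\ S^{*}&N^{*}\end{bmatrix}P^{*},
\]
and \eqref{ka7} supplies
\[
A^{GD}=P\begin{bmatrix}C^{-1}&C^{-(k+1)}(\widehat{T'}-\widehat{T}N^{-})\\ O&N^{-}\end{bmatrix}P^{*}.
\]
Since the inner copies of $P^{*}P$ reduce to the identity, $A^{*,GD}=P\,M\,P^{*}$, where $M$ is the product of the three inner block matrices. I would compute $M$ in two stages. Forming $AA^{GD}$ first collapses nicely, because $C\,C^{-1}=I$, to
\[
\begin{bmatrix}I&C^{-k}(\widehat{T'}-\widehat{T}N^{-})+SN^{-}\\ O&NN^{-}\end{bmatrix};
\]
left-multiplying this by the block form of $A^{*}$ then yields $A^{*,GD}$ directly. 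The first column is immediately $\begin{bmatrix}C^{*}\\ S^{*}\end{bmatrix}$, while the second column arises from distributing $C^{*}$ along the top row and $S^{*},N^{*}$ along the bottom row across the entries $C^{-k}(\widehat{T'}-\widehat{T}N^{-})+SN^{-}$ and $NN^{-}$.

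The one place demanding care is the lower-right block, where the powers of $C$ together with the summation hidden inside $\widehat{T'}-\widehat{T}N^{-}$ and the nilpotent products $N^{-}$, $NN^{-}$ must be combined correctly; in particular one uses $C^{*}C\cdot C^{-(k+1)}=C^{*}C^{-k}$ when the computation is organised instead as $A^{*}A$ followed by post-multiplication by $A^{GD}$. Carrying out both orderings, $A^{*}(AA^{GD})$ and $(A^{*}A)A^{GD}$, and checking that they agree is the natural safeguard against a slip in this block. Beyond this arithmetic I anticipate no conceptual obstacle, since the reductions are precisely those already validated in the proof of Theorem \ref{thm2.8}.
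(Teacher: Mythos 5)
Your strategy---substitute the block forms of $A^{*}$, $A$, and $A^{GD}$ from \eqref{ka5} and \eqref{ka7} and multiply out, cancelling the interior $P^{*}P$ factors---is exactly the computation the paper intends (it supplies no proof here, deferring to the ``similar'' calculation behind Theorem \ref{thm2.8}), and your first column and $(1,2)$ block come out correctly. The difficulty lies in the one block you flag as ``demanding care'' but never actually compute: the $(2,2)$ entry. Writing $W=\widehat{T'}-\widehat{T}N^{-}$, your own recipe---distribute $S^{*}$ and $N^{*}$ from the bottom row of $A^{*}$ across the second column $\bigl(C^{-k}W+SN^{-},\;NN^{-}\bigr)$ of $AA^{GD}$---yields
\[
S^{*}\bigl(C^{-k}W+SN^{-}\bigr)+N^{*}NN^{-}=S^{*}C^{-k}W+S^{*}SN^{-}+N^{*}NN^{-},
\]
which carries an extra term $S^{*}SN^{-}$ absent from the displayed formula. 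Your proposed safeguard of computing both orderings would not catch this: $(A^{*}A)A^{GD}$ gives the same expression, so the two orderings agree with each other but not with \eqref{ka6}. Nor does $S^{*}SN^{-}$ vanish in general: for $A=\begin{bmatrix}1&1\\0&0\end{bmatrix}$ as in Example \ref{exam3} (so $P=I$, $C=S=(1)$, $N=(0)$, $N^{-}=(n)$ arbitrary, $k=1$, $W=1-n$), one has $A^{*}AA^{GD}=\begin{bmatrix}1&1\\1&1\end{bmatrix}$, whereas the stated $(2,2)$ entry evaluates to $1-n$.

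So the concrete gap is this: carried to completion, your computation proves an identity whose lower-right block is $S^{*}C^{-k}(\widehat{T'}-\widehat{T}N^{-})+S^{*}SN^{-}+N^{*}NN^{-}$, not the one printed in the theorem, and as a proof of the statement as written it fails at that block. You would either need to justify $S^{*}SN^{-}=O$ (false in general), or note that the theorem's $(2,2)$ entry requires the additional summand $S^{*}SN^{-}$---in which case your block multiplication, done explicitly rather than asserted, is a correct proof of the corrected formula. Do not leave the decisive entry to ``bookkeeping''; it is precisely where the computation and the claim part ways.
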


\subsection{GD-star-one matrices}\label{sec7}
In this subsection, we define a new class of matrices called GD $*_1$ and investigate a few of its properties.   First we define GD $*_1$ matrices as follows.   
\begin{definition}\label{dsa1}
Let $A \in \mathbb{C}^{m\times m}$ and $ ind(A)=k$. For  $A^{GD} \in A\{GD\}$, a  GD $*_1$ matrix of $A$, denoted by $A^{GD,*_1}$, is an $m \times m$ matrix
$$A^{GD,*_1} = A^{GD}A^{*}A.$$
\end{definition}
An example that shows a GD-star matrix is different from a GD $*_1$ matrix. Obviously, a GD $*_1$ of a matrix $A \in \mathbb{C}^{m\times m}$  is also not unique.
\begin{example}
From Example \ref{exam3}, $A^{GD}=\begin{bmatrix}1&a\\0&b\end{bmatrix}$ is a GD inverse of $A=\begin{bmatrix}1&1\\0&0\end{bmatrix}$. Now, $A^{GD,*_1}=A^{GD}A^*A=\begin{bmatrix}1&a\\0&b\end{bmatrix}\begin{bmatrix}1&0\\1&0\end{bmatrix}\begin{bmatrix}1&1\\0&0\end{bmatrix}=\begin{bmatrix}1+a&1+a\\b&b\end{bmatrix}$, $A^{GD,*}=A^{GD}AA^*=\begin{bmatrix}
    2&0\\0&0
\end{bmatrix}$ and $A^*AA^{GD}=\begin{bmatrix}1&a+b\\1&a+b\end{bmatrix}$. It is clear $A^{*,GD}\neq A^{GD,*_1}\neq A^{GD,*}.$
\end{example} 
Some properties of a GD $*_1$ inverse are given below. 
\begin{lemma}\label{dsa}
Let $A \in \mathbb{C}^{m\times m}$, and $ind(A)=k$. If $A^{GD}\in A\{GD\}$, then  a GD $*_1$ inverse $X$ of the matrix $A$ satisfies the following properties:
\begin{enumerate}[(i)]
\item $A^{k+1}X=A^kA^*A$.
\item $XA^{\dagger}=A^{GD}A^*$.
\item $A^{k+1}XA^{\dagger}=A^kA^*$.
\item If $A$ is EP, then $AXA^{\dagger}=A^*$.
\item $XA^{GD}A=X$.
\item $XY=A^{GD}A^*$, where $Y\in A\{GD,\dagger\}$.
\item $XZ=A^{GD}A^*A^{GD}$, where $Z\in A\{\dagger,GD\}$.
\item $A^{k+1}XY=A^kA^*$ where $Y\in A\{GD,\dagger\}$.
\end{enumerate}
\end{lemma}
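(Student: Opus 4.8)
The plan is to fix $X = A^{GD}A^*A$ once and for all and to reduce every item to a short list of core identities, which I would record at the outset: the inner-inverse relation $AA^{GD}A = A$; the GD relations $A^{k+1}A^{GD} = A^k$ and $A^{GD}A^{k+1} = A^k$; the Moore--Penrose consequence $A^*AA^{\dagger} = A^*$ (obtained from $AA^{\dagger}A = A$ and $(AA^{\dagger})^* = AA^{\dagger}$, since $A^* = (AA^{\dagger}A)^* = A^*AA^{\dagger}$); and the definitions $A^{GD,\dagger} = A^{GD}AA^{\dagger}$ and $A^{\dagger,GD} = A^{\dagger}AA^{GD}$ for the members of $A\{GD,\dagger\}$ and $A\{\dagger,GD\}$. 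With these in hand, each claim becomes a one- or two-step cancellation, so the lemma is essentially a bookkeeping exercise rather than a conceptual one.

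First I would dispatch the items that use only the GD relations and the inner-inverse property. For (i), left-multiplying $X$ by $A^{k+1}$ and applying $A^{k+1}A^{GD} = A^k$ gives $A^{k+1}X = A^kA^*A$ immediately. For (v), appending $A^{GD}A$ on the right and invoking $AA^{GD}A = A$ collapses $A^*AA^{GD}A$ to $A^*A$, which returns $X$. Next I would handle the items carrying $A^{\dagger}$, all of which hinge on $A^*AA^{\dagger} = A^*$: for (ii), $XA^{\dagger} = A^{GD}A^*AA^{\dagger} = A^{GD}A^*$; for (iii), combining (i) with the same identity yields $A^{k+1}XA^{\dagger} = A^kA^*AA^{\dagger} = A^kA^*$.

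The two product items follow the same template. For (vi) I would expand $XY = A^{GD}A^*AA^{GD}AA^{\dagger}$ and cancel the inner $AA^{GD}A = A$ to reach $A^{GD}A^*AA^{\dagger} = A^{GD}A^*$; for (vii) I would instead cancel $AA^{\dagger}A = A$ inside $XZ = A^{GD}A^*AA^{\dagger}AA^{GD}$ to obtain $A^{GD}A^*AA^{GD}$. Item (viii) is then immediate: left-multiplying the conclusion of (vi) by $A^{k+1}$ and using $A^{k+1}A^{GD} = A^k$ gives $A^{k+1}XY = A^kA^*$.

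The one step requiring a genuinely different ingredient is (iv), the EP case, and I expect this to be the main (though still modest) obstacle, as it is the only place where range structure rather than pure cancellation intervenes. Here I would use that an EP matrix has index one and satisfies $R(A^*) = R(A)$. Since $AA^{GD}A = A$, the matrix $AA^{GD}$ is idempotent with range $R(A)$, and $A^*AA^{\dagger} = A^*$ reduces $AXA^{\dagger} = AA^{GD}A^*AA^{\dagger}$ to $AA^{GD}A^*$. The crux is then to argue $AA^{GD}A^* = A^*$: because $AA^{GD}$ is a projector onto $R(A) = R(A^*)$, it fixes every vector in its range, and the columns of $A^*$ lie in $R(A^*)$, so the projector leaves $A^*$ unchanged and $AXA^{\dagger} = A^*$ follows. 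This range argument is the only point where the EP hypothesis does real work, so it is where I would take the most care.
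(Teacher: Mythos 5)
Your proof is correct and, for items (i)--(iii), (v), (vi) and (viii), coincides with the paper's own computations (the paper writes the cancellation as $A^*AA^{\dagger}=A^*(A^*)^{\dagger}A^*=A^*$ where you quote $A^*AA^{\dagger}=A^*$ directly; this is the same step). The one genuine divergence is (iv): the paper stays entirely inside identity manipulation, inserting $A^*=A^{\dagger}AA^*$ and using the EP commutation $AA^{\dagger}=A^{\dagger}A$ twice to arrive at $A^{\dagger}AA^*=A^*$, whereas you first reduce $AXA^{\dagger}$ to $AA^{GD}A^*$ and then invoke the structural fact that $AA^{GD}$ is an idempotent with range $R(A)$ (from $AA^{GD}A=A$), which under the EP hypothesis $R(A)=R(A^*)$ must fix every column of $A^*$. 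Your route is shorter and isolates exactly where the EP hypothesis does work; the paper's route avoids any range argument at the cost of using EP twice. Both are valid. One caveat you should have flagged: in (vii) your (correct) computation yields $XZ=A^{GD}A^*AA^{GD}$, which is not the expression $A^{GD}A^*A^{GD}$ asserted in the lemma, and the two differ in general. The paper's own proof of (vii) is only the phrase ``similar to the above part,'' and the natural computation indeed produces $A^{GD}A^*AA^{GD}$, so the stated formula appears to be a typo in the lemma rather than a defect in your argument --- but as written your derivation does not literally establish the claim, and the discrepancy deserves an explicit remark.
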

\begin{proof}
(i) $A^{k+1}X=A^{k+1}A^{GD}A^*A=A^kA^*A$.\\
(ii) $XA^{\dagger}=A^{GD}A^*AA^{\dagger}=A^{GD}A^*(A^*)^{\dagger}A^*=A^{GD}A^*$.\\
(iii) $A^{k+1}XA^{\dagger}=A^{k+1}A^{GD}A^*AA^{\dagger}=A^kA^*(A^*)^{\dagger}A^*=A^kA^*$.\\
(iv) We have $AA^{\dagger}=A^{\dagger}A$. Then, $AXA^{\dagger}=AA^{GD}A^*AA^{\dagger}=AA^{GD}A^{\dagger}AA^*AA^{\dagger}=AA^{GD}AA^{\dagger}A^*AA^{\dagger}=AA^{\dagger}A^*=A^{\dagger}AA^*=A^*$.\\
(v) It is obvious.\\
(vi) Let $Y=A^{GD}AA^{\dagger}$. Then, $XY=A^{GD}A^*AA^{GD}AA^{\dagger}=A^{GD}A^*AA^{\dagger}=A^{GD}A^*$.\\
(vii) Similar to the above part.\\
(viii) From (vi), we obtain $XY=A^{GD}A^*$. Now, $A^{k+1}XY=A^{k+1}A^{GD}A^*=A^{k}A^*.$ 

\end{proof}

We conclude this section with an open problem.

{\bf Problem:} Let $A\in \mathbb{C}^{m\times m}$ with  $ind(A)=k> 1$. Then, consider the matrix equations
\begin{align*}
    AXA&=A\\
    XAX&=X\\
    A^{k+1}X&=XA^{k+1}=A^k.
\end{align*}
Under what conditions a solution of the above matrix equation exist? How does the solution look like?

\section{Conclusions}
    The notion of a GD-star matrix and its representation  for a square matrix has been introduced. Some properties of GD-star order have been presented. Some sufficient conditions are obtained so that the triple reverse and forward-order laws for GD and GD-star generalized inverses hold.
    The discussed results are useful for computation of absorption law and to solve a linear system. These theories can also be studied in a ring with involution and in a tensor setting.
One may also look for an integral representation of GD inverse and GD-star matrix in future research work.

 \noindent 





\section{Acknowledgements}
The first author acknowledges the support of the Council of Scientific and Industrial Research-University Grants Commission, India.




\end{document}